\documentclass[a4paper,reqno]{amsart}
\usepackage[latin1]{inputenc}
\usepackage{amsfonts}
\usepackage{amsmath,latexsym,amssymb,amsfonts, amsthm,amsbsy}
\usepackage{esint}
\usepackage{cite}
\usepackage{graphicx}
\usepackage{amscd}
\usepackage{color}
\usepackage{bm}             
\usepackage{enumerate}
\usepackage[dvips]{epsfig}
\usepackage{psfrag}
\usepackage{epsfig}
\usepackage{mathrsfs}  
\usepackage{bm}
\usepackage{cases}

\usepackage{empheq}

\usepackage{subcaption}

\usepackage{tikz}

\usepackage[
  hmarginratio={1:1},     
  vmarginratio={1:1},     
  textwidth=16cm,        
  textheight=21cm,
  heightrounded,          
]{geometry}

\usepackage{graphicx,color}
\usepackage[colorlinks]{hyperref}
\hypersetup{linkcolor=blue,citecolor=blue,filecolor=black,urlcolor=blue}




\newtheorem{theorem}{Theorem}
\newtheorem{corollary}[theorem]{Corollary}
\newtheorem{proposition}[theorem]{Proposition}
\newtheorem{lemma}[theorem]{Lemma}
\theoremstyle{definition}
\newtheorem{remark}{Remark}

\numberwithin{theorem}{section}

\numberwithin{remark}{section}

\numberwithin{equation}{section}

\newcommand{\R}{\mathbb{R}}
\newcommand{\N}{\mathbb{N}}

\newcommand{\beq}{\begin{equation}}
\newcommand{\eeq}{\end{equation}}

\newcommand{\beqn}{\begin{equation*}}
\newcommand{\eeqn}{\end{equation*}}

\newcommand{\Mb}{{\mathbb{M}}}

\title[The Lane-Emden system on Cartan-Hadamard manifolds]{The Lane-Emden system on Cartan-Hadamard manifolds: asymptotics and rigidity of radial solutions}

\author[M. Muratori]{Matteo Muratori}\thanks{}
\address{Matteo Muratori \newline \indent
	Dipartimento di Matematica,  Politecnico di Milano  \newline \indent
	Piazza Leonardo da Vinci 32, 20133 Milano, Italy}
\email{matteo.muratori@polimi.it}

\author[N. Soave]{Nicola Soave}\thanks{}
\address{Nicola Soave \newline \indent
	Dipartimento di Matematica ``Giuseppe Peano'',  Universit\`a degli Studi di Torino  \newline \indent
	Via Carlo Alberto 10, 10123 Torino, Italy}
\email{nicola.soave@unito.it}

\makeatletter
\@namedef{subjclassname@2020}{%
  \textup{2020} Mathematics Subject Classification}
\makeatother

\keywords{Cartan-Hadamard manifolds; Lane-Emden system; Radial solutions; Stochastic completeness.}
\subjclass[2020]{Primary: 58J05; 35B53; 35J47; 35J91. Secondary: 58J70; 34A12; 34A34; 34C40; 34D05.}

\begin{document}

\begin{abstract}
We investigate existence and qualitative properties of globally defined and positive radial solutions of the Lane-Emden system, posed on a Cartan-Hadamard model manifold $ \Mb^n $. We prove that, for critical or supercritical exponents, there exists at least a one-parameter family of such solutions. Depending on the stochastic completeness or incompleteness of $ \Mb^n $, we show that the existence region stays one dimensional in the former case, whereas it becomes two dimensional in the latter. Then, we study the asymptotics at infinity of solutions, which again exhibit a dichotomous behavior between the stochastically complete (where both components are forced to vanish) and incomplete cases. Finally, we prove a rigidity result for finite-energy solutions, showing that they exist if and only if $ \Mb^n $ is isometric to $ \R^n $.
\end{abstract}

\maketitle


\section{Introduction}\label{sec: intro}

The Lane-Emden equation 
\beq\label{LE eq}
-\Delta u= u^p \, , \quad u>0 \, , 
\eeq
with $p>0$, is the prototype of semilinear elliptic equations, and has played a central role in the development of several tools in the analysis of nonlinear PDEs. Its system counterpart, known in the literature as \emph{Lane-Emden system}, that is
\beq\label{LE sys}
\begin{cases}
-\Delta u = v^q \,   \\
-\Delta v = u^p \,  \\
u,v >0 \, , 
\end{cases}
\eeq
with $p,q>0 $, has also received a lot of attention in the recent years, but is far less understood. The purpose of this paper is to study existence and qualitative properties of \emph{radial solutions} to \eqref{LE sys} posed on a \emph{Cartan-Hadamard manifold} $ \Mb^n $, in the \emph{critical or supercritical regime} of the exponents (see below). Recall that a Cartan-Hadamard manifold is a complete and simply connected Riemannian manifold with nonpositive sectional curvature. An important feature of this type of manifolds consists in the possibility of writing \emph{global polar coordinates} centered at \emph{any} reference point $ o \in \Mb^n $, as the well-known Cartan-Hadamard theorem entails that the exponential map at $o$ is  a diffeomorphism between $ \R^n $ and $ \Mb^n $.

Thus, in order to understand existence and properties of solutions to \eqref{LE sys} on Cartan-Hadamard manifolds, and having in mind the Euclidean case, 
it appears natural to focus on the radial problem at first, upon requiring in addition that the ambient space itself is spherically symmetric. In this framework, we establish the existence of positive radial solutions on any Cartan-Hadamard \emph{model} manifold (we refer to Subsection \ref{mb} for definitions and details on the geometric background). As a second step, focusing on uniqueness or multiplicity, and on the limit behavior of such solutions, we discover an interesting dichotomy: if the underlying manifold $\Mb^n$ is \emph{stochastically complete}, then the scenario is Euclidean like, namely we prove a uniqueness result and show that all solutions are such that both $u$ and $v$ vanish at infinity (see Theorem \ref{monotone-coro}); if instead $\Mb^n$ is \emph{stochastically incomplete}, then we show a new phenomenon of multiple existence of positive solutions with strictly positive limits at infinity (see Theorem \ref{teo-cs-interv}). Finally, we establish a strong rigidity result in terms of the natural energies involved when dealing with \eqref{LE sys}: either they are all finite, and in this case the underlying manifold is necessarily $\R^n$, the problem is critical, and $(u,v)$ belongs to a $1$-parameter family, or they are all infinite (see Theorem \ref{full-rigidity-proof}). As a consequence of our methods of proof, we can actually extend all of our main results to a suitable class of Riemannian models that are not necessarily Cartan-Hadamard (see Corollary \ref{CH-remove}). 

%

\subsection{Motivation and the state of the art}\label{msa}

Existence and qualitative properties of solutions to \eqref{LE eq}, posed in the Euclidean space $\R^n$, $n \ge 2$, are by now well understood: in the \emph{subcritical} regime, namely $1<p+1<2^*:=\frac{2n}{n-2} $ (with $2^\ast=\infty$ if $n=2$), the problem has no classical solutions, regardless of radial symmetry. In the \emph{critical} case $p+1=2^*$ such solutions exist, are radially symmetric, and correspond to the extremals of the Sobolev inequality; in particular, they belong to the Sobolev space $D^{1,2}(\R^n)$, defined as the completion of $C^\infty_c(\R^n)$ with respect to the norm 
\[
\|u\|_{D^{1,2}(\R^n)}^2 := \int_{\R^n} |\nabla u|^2 \, dx \, .
\]
In the \emph{supercritical} regime $p+1>2^*$, radial classical solutions still exist, decay to $0$ at infinity, but do not belong to the \emph{energy space} $D^{1,2}(\R^n)$. We refer the interested reader to the excellent monograph \cite{QS}, and to the corresponding bibliography, for further details.

The situation can be considerably different if \eqref{LE eq} is posed on a  Cartan-Hadamard manifold $\Mb^n$. A first remarkable difference is that positive radial solutions may exist even in the subcritical regime, both with finite and infinite $D^{1,2}(\Mb^n)$ norm; for instance, this is the case if $\Mb^n \equiv \mathbb{H}^n$ is the $n$-dimensional hyperbolic space \cite{BGGV, MaSa}, or if $\Mb^n$ is a more general \emph{model manifold} satisfying suitable assumptions \cite[Theorems 2.5 and 2.7]{BeFeGr}. Concerning the critical or supercritical regimes, the situation is somehow more rigid, in the following sense. If $u$ is a radial solution to \eqref{LE eq} on a Cartan-Hadamard model manifold $\Mb^n$, with $p + 1 \ge 2^\ast  $ and $\|u\|_{D^{1,2}(\R^n)}<+\infty$, then $\Mb^n$ is necessarily isometric to $\R^n$ and $u$ is therefore an Aubin-Talenti function \cite[Theorem 1.3]{MS} (see also \cite[Theorem 2.4]{BeFeGr} for a related result obtained under additional assumptions on $\Mb^n$, and \cite{MK} for a previous rigidity result concerning solutions that minimize the Sobolev quotient). Furthermore, the asymptotic behavior of radial solutions is strongly affected by the global geometric properties of the underlying manifold: if $\mathbb{M}^n$ is \emph{stochastically complete}, then all radial solutions tend to $0$ at infinity; otherwise, if it is \emph{stochastically incomplete}, each solution converges to a strictly positive constant at infinity \cite[Theorem 1.4]{MS}. Additional asymptotic estimates can be found in \cite[Theorem 1.5]{MS} and \cite[Theorem 2.4]{BeFeGr}.

We finally refer to \cite{BaKr, CatMon, FogMalMaz, GidSpr} and references therein for results regarding \eqref{LE eq} and related inequalities posed on manifolds with \emph{nonnegative Ricci curvature}, namely the case complementary to ours. 

\medskip 

Concerning system \eqref{LE sys}, the problem in $\R^n$ presents several analogies with the corresponding scalar case. In particular, one can naturally introduce a subcritical regime 
\beqn\label{subcrit}
p, q >0 \, , \qquad \frac{1}{p+1}+\frac{1}{q+1} > \frac{n-2}{n} \, ,
\eeqn
a critical regime
\beq\label{crit}
p, q >0 \, , \qquad \frac{1}{p+1}+\frac{1}{q+1} = \frac{n-2}{n} \, ,
\eeq
and a supercritical regime
\beq\label{supercritical}
p, q >0 \, , \qquad \frac{1}{p+1}+\frac{1}{q+1} < \frac{n-2}{n} \, .
\eeq 
For subcritical exponents, radial positive solutions do not exist \cite{Mit}, and it is conjectured that positive classical solutions do not exist at all. This has been rigorously proved only up to dimension $n=4$ \cite{Sou} (see also \cite{BuMa, PoQuSo, ReZo, SeZo2}). On the other hand, for critical or supercritical exponents radial positive solutions do exist \cite{Lio, SerZou0, SerZou}. In the critical case, they correspond to extremals for higher-order Sobolev inequalities \cite{Lio} (see also \cite{Wan}), while in the supercritical regime it is not expected that they belong to any natural Sobolev space.

As far as we know, problem \eqref{LE sys} on Cartan-Hadamard (model) manifolds is untouched. The purpose of this paper is thus to investigate existence and qualitative properties of \emph{radial positive solutions}, in the \emph{critical} or \emph{supercritical} regimes. This implies working in spatial dimension $ n \ge 3 $, otherwise conditions \eqref{crit} and \eqref{supercritical} are empty. As concerns the subcritical regime, that we do not address here, it is natural to wonder whether or not, on suitable Cartan-Hadamard manifolds that significantly differ from $ \R^n $, globally positive solutions can exist. If we think of what happens in the Euclidean case (non-existence, as recalled above), and the results in \cite{MaSa} and \cite{BeFeGr} (existence for \eqref{LE eq} on the hyperbolic space and on many other model manifolds), the answer still depends on the specific analytic-geometric properties of $\Mb^n$. 

\medskip

In order to state our main results in a precise form, it is necessary to recall first some standard definitions and introduce notations accordingly.

\subsection{Main results and basic notions} \label{mb}
We say that a noncompact Riemannian manifold $ \Mb^n $ is a \emph{model} if there exists a pole $o \in \Mb^n$ such that its metric $g$ is given, in polar (or spherical) global coordinates about $o$, by 
\beqn\label{metric}
g \equiv dr \otimes dr + \psi^2(r) \, g_{\mathbb{S}^{n-1}} \, , 
\eeqn
where $r$ is the Riemannian distance of a point $ x \equiv  (r,\theta) \in \mathbb{R}^+ \times \mathbb{S}^{n-1} $ from $o$, $g_{\mathbb{S}^{n-1}}$ stands for the usual round metric on the unit sphere $ \mathbb{S}^{n-1} $ and $\psi $ is a $ C^1([0,+\infty)) \cap C^\infty((0,+\infty)) $ positive function with $\psi(0) = 0$ and $\psi'(0) = 1$ (for a more complete introduction to model manifolds we refer the reader \emph{e.g.}~to \cite{GW,Gri}). The Cartan-Hadamard assumption in this case is equivalent to the fact that $ \psi $ is in addition \emph{convex}, due to the explicit expression of the sectional curvatures in terms of $ \psi $, see for instance \cite[Section 2]{GMV17}. A prototypical example is represented by the choice $\psi(r) = \sinh r$, which gives rise to a well-known realization of the \emph{hyperbolic space} $\mathbb{H}^n$, whose sectional curvature is identically equal to $ -1 $.  

For future convenience, we define 
\begin{equation}\label{def-theta}
\Theta(r) := \frac{1}{\psi^{n-1}(r)} \int_0^r \psi^{n-1} \, ds\qquad \forall r > 0 \, ;
\end{equation}
namely, $\Theta$ is the function accounting for the volume-surface ratio of geodesic balls centered at the pole. Its importance, for our purposes, is due to the fact that a model manifold $\Mb^n$ (not necessarily of Cartan-Hadamard type) is \emph{stochastically complete} if $\Theta \not \in L^1(\R^+)$, whereas it is \emph{stochastically incomplete} if $\Theta \in L^1(\R^+)$. This dichotomy will have a key role in our results. We refer to \cite[Sections 3 and 6]{Gri} for a deeper discussion, and we also point out the recent papers \cite{GIM, GIMP} for new nonlinear analytic characterizations of stochastic (in)completeness for general manifolds.

\medskip

By writing \eqref{LE sys} in polar coordinates, it is not difficult to check (see for instance \cite{BeFeGr} or \cite{MS} for the details) that a radial solution is (represented by) a regular enough function $(u,v): [0,+\infty) \to \R^2$ solving the Cauchy problem
\beq\label{Cauchy pb}
\begin{cases}
\left(\psi^{n-1}\,u'\right)' + \psi^{n-1} \, |v|^{q-1}  v =0 & \text{ for $r>0$}  \\
\left(\psi^{n-1}\,v'\right)' + \psi^{n-1} \, |u|^{p-1} u =0 & \text{ for $r>0$} \\
u'(0) = 0 = v'(0)  \\
u(0) = \xi \, , \quad v(0) = \eta \, ,
\end{cases}
\eeq
for some initial data $(\xi, \eta) \in (0,+\infty)^2$. Note that, although we are only interested in positive solutions, for technical reasons it is necessary to be able to deal with sign-changing solutions as well, whence the replacement of $ v^q $ and $ u^p $ with $ |v|^{q-1}  v $ and $ |u|^{p-1} u $, respectively. The fact that \eqref{Cauchy pb} gives rise to an everywhere positive solution is a highly nontrivial issue, which is actually false in general, and will be thoroughly addressed in Section \ref{esistenza-base}. In what follows, we will say that $(u,v)$ is a (radial) \emph{globally positive solution} if it solves \eqref{Cauchy pb} for every $r>0$ and $u, v>0$ on $[0,+\infty)$. Clearly, any such a solution solves the Lane-Emden system \eqref{LE sys}. 

At this point it is worth recalling that, in the Euclidean setting, from well-known results due to Serrin and Zou \cite{SerZou0, SerZou} (see also \cite{Lio}) the system \eqref{Cauchy pb} for $ \Mb^n \equiv \R^n $ and in the critical-supercritical regime
\begin{equation}\label{crit-sup}
\frac{1}{p+1} + \frac{1}{q+1} \le \frac{n-2}{n}
\end{equation}	
admits a globally positive solution if and only if the initial data $ (\xi,\eta) $ satisfy the explicit relation
\beq\label{scaling-curve}
\eta \equiv \eta(\xi) = c \, \xi^\frac{p+1}{q+1} \, ,
\eeq
where $ c $ is a positive constant depending only on $  p,q,n $. In particular, we observe that $ \xi \mapsto \eta(\xi)$ is a strictly increasing and continuous bijection of $(0,+\infty)$ into itself. We stress that the specific form \eqref{scaling-curve} of the function $ \eta(\xi) $ is crucially related to the natural \emph{scaling properties} of \eqref{LE sys} in $ \R^n $, which however fail on model manifolds. Indeed, as we will see in a moment, such function is not explicit and its very definition, \emph{i.e.}~the fact that for every $ \xi>0 $ there exists a \emph{unique} value of $ \eta $ ensuring global positivity, strongly depends on the stochastic completeness or incompleteness of $\Mb^n$. 

\medskip

Before stating our main results, for notational convenience, for any globally positive solution of \eqref{Cauchy pb} we set
$$
\ell_u := \lim_{r \to +\infty} u(r)  \, , \qquad \ell_v := \lim_{r \to +\infty} v(r) \, ,
$$
the existence of such limits being guaranteed by the monotonicity of both components, which readily follows from the differential equations in \eqref{Cauchy pb} (see Section \ref{sec: prel}).

\begin{theorem}[Globally positive solutions for stochastically complete manifolds]\label{monotone-coro}
Let $ \Mb^n $ ($ n \ge 3 $) be a Cartan-Hadamard model manifold associated to a function $ \psi $ with $ \Theta \not \in L^1(\R^+) $. Let $ p,q>0 $ fulfill \eqref{crit-sup}. Then, for each $ \xi>0 $ there exists one and only one  value $ \eta \equiv \eta(\xi)>0 $ such that $ (\xi,\eta) $ gives rise to a globally positive solution $ (u,v) $ to \eqref{Cauchy pb}, which satisfies
$$
\ell_u=\ell_v=0 \, .
$$
Moreover, the function $ \xi \mapsto \eta(\xi) $ is a continuous and strictly increasing bijection of $(0,+\infty)$ into itself. 
\end{theorem}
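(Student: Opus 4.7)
The plan is to prove the theorem by a shooting argument in the second initial datum $\eta$, coupled with a difference-function bootstrap that yields both uniqueness and monotonicity; throughout, the hypothesis $\Theta \notin L^1(\R^+)$ is the precise mechanism forcing the limits at infinity to vanish. Fix $\xi > 0$ and for $\eta > 0$ let $(u_\eta, v_\eta)$ denote the solution of \eqref{Cauchy pb}, with $R_u(\eta), R_v(\eta) \in (0,+\infty]$ the first zeros of its components. Both components are strictly decreasing on their common interval of positivity, as follows immediately from the two ODEs. Split $(0,+\infty)$ as
\[
A_\xi := \{\eta : R_u < R_v\}, \quad B_\xi := \{\eta : R_v < R_u\}, \quad G_\xi := \{\eta : R_u = R_v = +\infty\}.
\]
The sets $A_\xi, B_\xi$ are open by continuous dependence on initial data together with transversality $u'(R_u), v'(R_v) < 0$. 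A crude comparison (small $\eta$ makes $v$ vanish fast while $u \approx \xi$, and symmetrically) shows that $B_\xi$ contains a right neighborhood of $0$ and $A_\xi$ a left neighborhood of $+\infty$. Hence $\eta^\star := \inf A_\xi \in (0,+\infty)$ is well defined.

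The asymptotic statement $\ell_u = \ell_v = 0$ for $\eta \in G_\xi$ is where stochastic completeness is used. Monotonicity yields nonnegative limits; then from $v'(r) = -\psi^{-(n-1)}(r)\int_0^r \psi^{n-1} u^p \, ds \le -\ell_u^p \, \Theta(r)$, integrating and invoking $\Theta \notin L^1(\R^+)$ forces $\ell_u = 0$ (else $v$ becomes negative), and symmetrically $\ell_v = 0$. Uniqueness of $\eta \in G_\xi$ follows by a difference bootstrap. Assuming $\eta_1 < \eta_2$ both lie in $G_\xi$, set $f := u_2 - u_1$, $g := v_2 - v_1$, so that $f(0) = 0$, $g(0) > 0$ and $f'(0) = g'(0) = 0$. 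From $(\psi^{n-1} f')' = -\psi^{n-1}(v_2^q - v_1^q)$ and $(\psi^{n-1} g')' = -\psi^{n-1}(u_2^p - u_1^p)$, the self-reinforcing implications $g > 0 \Rightarrow f$ strictly decreasing and $f < 0 \Rightarrow g$ strictly increasing propagate to all $r > 0$. Hence $f$ is strictly decreasing from $0$; but $u_i \to 0$ gives $f \to 0$, a contradiction.

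The main technical obstacle is verifying $\eta^\star \in G_\xi$. By openness $\eta^\star \notin A_\xi$, and $\eta^\star \notin B_\xi$ (otherwise a full neighborhood of $\eta^\star$ would lie in $B_\xi$, contradicting $\eta^\star = \inf A_\xi$). The remaining alternative is the exceptional simultaneous zero $R_u(\eta^\star) = R_v(\eta^\star) =: r^\star < +\infty$, with $u'(r^\star), v'(r^\star) < 0$. A careful perturbation argument exploiting the implicit function theorem at the two transversal crossings shows that any left neighborhood of $\eta^\star$ contains elements of both $A_\xi$ and $B_\xi$, again contradicting the definition of $\eta^\star$. Therefore $\eta^\star \in G_\xi$, and by the uniqueness step $G_\xi = \{\eta^\star\}$; set $\eta(\xi) := \eta^\star$.

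Finally, the qualitative properties of $\xi \mapsto \eta(\xi)$ follow from the same ODE-comparison toolkit. \emph{Continuity}: by continuous dependence on compacta together with openness of $A_\xi$ and $B_\xi$, any accumulation point of $\eta(\xi_n)$ for $\xi_n \to \xi$ lies in $G_\xi$, hence equals $\eta(\xi)$. \emph{Strict monotonicity}: for $\xi_1 < \xi_2$, if $\eta(\xi_1) \ge \eta(\xi_2)$ then the bootstrap of the uniqueness step, now with $f(0) > 0$ and $g(0) \le 0$, propagates $f > 0$ strictly increasing, contradicting $u_i \to 0$. \emph{Surjectivity}: $\eta(\xi) \to 0$ as $\xi \to 0^+$, since otherwise the limit of approximating globally positive solutions would satisfy $u(0) = 0$, $v(0) > 0$ and become strictly negative for $r > 0$; $\eta(\xi) \to +\infty$ as $\xi \to +\infty$, since $v \le \beta$ bounded and $u(r) \ge \xi - \beta^q \int_0^r \Theta(s)\, ds$ would yield $v'(r) \le -\tfrac{1}{2}\xi^p \, \Theta(r)$ on a fixed small interval, making $v$ negative for $\xi$ large.
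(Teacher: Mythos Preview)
Your shooting-and-bootstrap framework is essentially the same as the paper's, and the uniqueness argument, the proof that $\ell_u=\ell_v=0$, and the monotonicity/bijection part are all fine and match the paper's Lemma~\ref{ordering} and Corollary~\ref{prop: sc to 0}. However, there is a genuine gap at the single most delicate point: ruling out the possibility that $R_u(\eta^\star)=R_v(\eta^\star)=r^\star<+\infty$.

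Your proposed ``careful perturbation argument'' does not work, and in fact the natural perturbation gives the \emph{opposite} of what you claim. By the difference bootstrap you yourself set up, increasing $\eta$ makes $u$ pointwise smaller and $v$ pointwise larger on the common positivity interval; together with transversality this yields $\partial_\eta R_u<0$ and $\partial_\eta R_v>0$ at $\eta^\star$. Hence for $\eta$ slightly below $\eta^\star$ one gets $R_v<r^\star<R_u$ (i.e.\ $\eta\in B_\xi$), and for $\eta$ slightly above, $\eta\in A_\xi$. This is perfectly consistent with $\eta^\star=\inf A_\xi$ and produces no contradiction. A further warning sign is that your existence argument never uses the critical/supercritical hypothesis \eqref{crit-sup}; but in the subcritical regime globally positive radial solutions do \emph{not} exist (Mitidieri), so the shooting must break down somewhere, and it breaks down exactly here: in the subcritical case the simultaneous-zero alternative genuinely occurs. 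The paper closes this gap via a Pohozaev-type identity (Proposition~\ref{thm: non-ex bdd}): one computes $P_{(u,v)}'=K(r)\,u'v'$ with $K\le 0$ precisely under \eqref{crit-sup} and the Cartan--Hadamard assumption $\psi''\ge 0$, whence $P_{(u,v)}\le 0$; evaluating at a putative common zero $r^\star$ gives $P_{(u,v)}(r^\star)=\bigl(\int_0^{r^\star}\psi^{n-1}\bigr)u'(r^\star)v'(r^\star)>0$, a contradiction. You need this (or an equivalent use of \eqref{crit-sup}) to finish.

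A secondary point: your appeal to ``continuous dependence on initial data'' for the openness of $A_\xi,B_\xi$ is not routine when $p\wedge q<1$, since the nonlinearity is then non-Lipschitz and uniqueness may fail past a zero of a component. The paper handles this separately (Lemmas~\ref{lem: cont} and~\ref{open-sets}) via a Schauder fixed-point continuation giving a quantitative extension interval independent of which branch is chosen.
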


Hence, in the stochastically complete case the situation is Euclidean like, since there exists a specific continuous curve of initial data that give rise to globally positive solutions, except that it has no more the explicit expression \eqref{scaling-curve}. Furthermore, both components of the solution vanish as $ r \to +\infty  $. This agrees with the results of Serrin and Zou in \cite{SerZou0, SerZou}. Instead, in the stochastically incomplete case the scenario is more complicated and marks a striking difference with respect to the Euclidean framework. 

\begin{theorem}[Globally positive solutions for stochastically incomplete manifolds]\label{teo-cs-interv}
Let $ \Mb^n $ ($ n \ge 3 $) be a Cartan-Hadamard model manifold associated to a function $ \psi $ with $ \Theta \in L^1(\R^+) $. Let $ p,q>0 $ fulfill \eqref{crit-sup}. Then there exist two functions $ \eta_{m} , \eta_{M} $ which are continuous and strictly increasing bijections of $ (0,+\infty) $ into itself, satisfying
\beq\label{prop-etamm-bis}
 \eta_{m}(\xi) < \eta_{M}(\xi)  \qquad \forall \xi>0 \, , 
\eeq
\beq\label{prop-etamm-ter}
\limsup_{\xi \to +\infty} \left[ \eta_{M}(\xi) - \eta_{m}(\xi)  \right] < + \infty \,  , 
\eeq
such that for each $ \xi>0 $ problem \eqref{Cauchy pb} admits a globally positive solution $ (u,v) $ if and only if 
\beq\label{prop-etamm-2}
\eta_{m}(\xi) \le \eta \le \eta_{M}(\xi) \, .
\eeq
In addition, the following behavior at infinity holds: 
\begin{subequations}
\begin{empheq}[left={\empheqlbrace}]{alignat = 2}
& \ell_u>0 \, , \ \ell_v = 0  \qquad  \text{if } \eta=\eta_m(\xi) \, , \label{upvz}  \\ 
& \ell_u>0 \, , \ \ell_v > 0  \qquad  \text{if } \eta_m(\xi)<\eta<\eta_M(\xi) \, ,  \label{upvp} \\ 
& \ell_u=0 \, , \ \ell_v > 0  \qquad  \text{if } \eta=\eta_M(\xi) \, .  \label{uzvp}
\end{empheq}
\end{subequations}
\end{theorem}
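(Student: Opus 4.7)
The plan is a shooting argument in $\eta$ at fixed $\xi > 0$, combined with a backward-from-infinity construction which exploits $\Theta \in L^1(\R^+)$ to produce solutions with prescribed asymptotic limits.

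I would first establish a monotonicity lemma in the shooting parameter: if $\eta_1 < \eta_2$ and both shoots yield globally positive solutions, then $u_{\eta_2} \le u_{\eta_1}$ and $v_{\eta_2} \ge v_{\eta_1}$ on all of $[0,\infty)$. Near $r = 0$ this comes from a direct Taylor expansion, and propagation is obtained by ruling out first crossings via sign analysis of $(\psi^{n-1}(u_{\eta_2}-u_{\eta_1})')'$ and $(\psi^{n-1}(v_{\eta_2}-v_{\eta_1})')'$, using $p,q > 0$. It follows that $G(\xi) := \{\eta > 0 : (\xi,\eta) \text{ yields a globally positive solution}\}$ is an interval, and that $\eta \mapsto \ell_u$, $\eta \mapsto \ell_v$ are, respectively, non-increasing and non-decreasing on $G(\xi)$.

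Next, to show $G(\xi) = [\eta_m(\xi), \eta_M(\xi)]$ is non-degenerate and to pin down the endpoints, I would construct a two-parameter family of globally positive solutions via the integral formulation
\begin{equation*}
u(r) = \ell_u + \int_r^\infty \frac{1}{\psi^{n-1}(s)}\int_0^s \psi^{n-1}(t)\,v^q(t)\,dt\,ds, \quad v(r) = \ell_v + \int_r^\infty \frac{1}{\psi^{n-1}(s)}\int_0^s \psi^{n-1}(t)\,u^p(t)\,dt\,ds,
\end{equation*}
which is well-posed on $[R,\infty)$ for $R$ large precisely because $\Theta \in L^1$. A contraction argument yields, for any prescribed $(\ell_u, \ell_v) \in [0,\infty)^2 \setminus \{(0,0)\}$, a unique decreasing solution with those limits, which extends back to $r = 0$ via the Cauchy problem and produces data $(\xi(\ell_u,\ell_v), \eta(\ell_u,\ell_v))$. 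By the monotonicity above, $\eta_m(\xi)$ corresponds to the boundary solution with $\ell_v = 0$, $\eta_M(\xi)$ to the one with $\ell_u = 0$, and interior $\eta$'s to solutions with $\ell_u, \ell_v > 0$; the representation $\xi = \ell_u + \int_0^\infty \psi^{1-n}(s)\int_0^s \psi^{n-1} v^q\,dt\,ds$ together with strict monotonicity in $\eta$ rules out having $\ell_u \equiv 0$ throughout $[\eta_m, \eta_M]$, thereby proving the positivity of the opposite limit at each endpoint and giving \eqref{upvz}--\eqref{uzvp}, while \eqref{upvp} follows from strict monotonicity in the interior.

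Continuity and strict monotonicity of $\eta_m, \eta_M$ in $\xi$ would then follow from an analogous comparison argument in the parameter $\xi$ combined with standard continuous dependence on initial data, and the bijection property onto $(0,\infty)$ comes from tracking the regimes $\xi \to 0^+$ and $\xi \to +\infty$ in the backward construction. The hardest point is the bound \eqref{prop-etamm-ter}: writing $\eta = \ell_v + \int_0^\infty \psi^{1-n}(s)\int_0^s \psi^{n-1} u^p\,dt\,ds$ and symmetrically for $\xi$, the difference $\eta_M(\xi) - \eta_m(\xi)$ equals the gap between the $\eta$-values of the two boundary solutions attaining the same $\xi$; showing this gap stays uniformly bounded as $\xi \to \infty$ appears to require sharp asymptotics on the backward-constructed solutions, and is likely the most delicate technical step of the proof.
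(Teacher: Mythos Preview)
Your monotonicity/ordering lemma and the interval structure of $G(\xi)$ match the paper's Lemma~\ref{ordering} and Lemma~\ref{lem-interval}, and that part is fine. The genuine gap is the backward-from-infinity construction. The integral equation you wrote has inner integrals $\int_0^s$, so the operator is \emph{global} on $[0,\infty)$, not local near infinity; you cannot make it a contraction ``on $[R,\infty)$ for $R$ large'' because evaluating it requires the values of $u,v$ on $[0,R]$ as well. If instead you attempt a contraction on all of $C_b([0,\infty))$ in a ball of radius $\rho$ around the constant pair $(\ell_u,\ell_v)$, you need $(\ell_v+\rho)^q\theta\le\rho$ and $(\ell_u+\rho)^p\theta\le\rho$, which fails once $\ell_u,\ell_v$ (or the resulting initial data) are large --- and they must be, since $\eta_m,\eta_M$ are bijections of $(0,\infty)$. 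Without the backward construction you have no mechanism to show $G(\xi)$ is not a singleton, and strict monotonicity alone does not exclude the possibility $\ell_u=\ell_v=0$ at a single $\eta$. The paper fills exactly this hole by a completely different route: Proposition~\ref{not-both-vanish} proves $\ell_u\vee\ell_v>0$ for \emph{every} globally positive solution via a delicate argument using the Pohozaev inequality $P_{(u,v)}(r)\le -C<0$ (which crucially needs $\Mb^n\not\equiv\R^n$, guaranteed by stochastic incompleteness). Then Lemma~\ref{extendibility} perturbs $\eta$ in the appropriate direction, and Lemma~\ref{glob-conv-unif} (a compactness argument in $C_b$) gives closedness of $G(\xi)$.

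For \eqref{prop-etamm-ter} you correctly sense difficulty but misdiagnose it as an asymptotics problem. The paper's argument is short and structural: the strict monotonicity in Lemma~\ref{ordering} yields $\eta_M(\xi)-\eta_m(\xi)<\ell_{v_M}-\ell_{v_m}\le\ell_{v_M}$, and Proposition~\ref{abs-bound} provides an \emph{absolute} upper bound on $\ell_v$ depending only on $p,q,\theta$ (obtained by a bootstrapped integral inequality, essentially a nonlinear Gronwall argument on $\int_r^\infty v^q\Theta\,ds$). No sharp asymptotics of individual solutions are needed.
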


Two symbolic instances of the global positivity region in the space of the parameters $ (\xi,\eta) $, associated with a stochastically complete and a stochastically incomplete Cartan-Hadamard model manifold, respectively, are depicted in Figure \ref{fig:figures} below.

\medskip


We now focus on the possible existence of radial \emph{finite-energy solutions} to \eqref{LE sys} in the critical or supercritical cases. We recall that for the scalar problem \eqref{LE eq} such solutions cannot exist unless $ \Mb^n \equiv \R^n $ (\emph{i.e.}~the manifold is isometric to the Euclidean space), as shown in \cite{MS}. Here we are able to reproduce the natural counterpart of this rigidity result for the Lane-Emden system. 

\begin{figure} 
\centering 
\quad \,
\begin{subfigure}[b]{0.4\textwidth} 
\centering
     \includegraphics[scale=0.9]{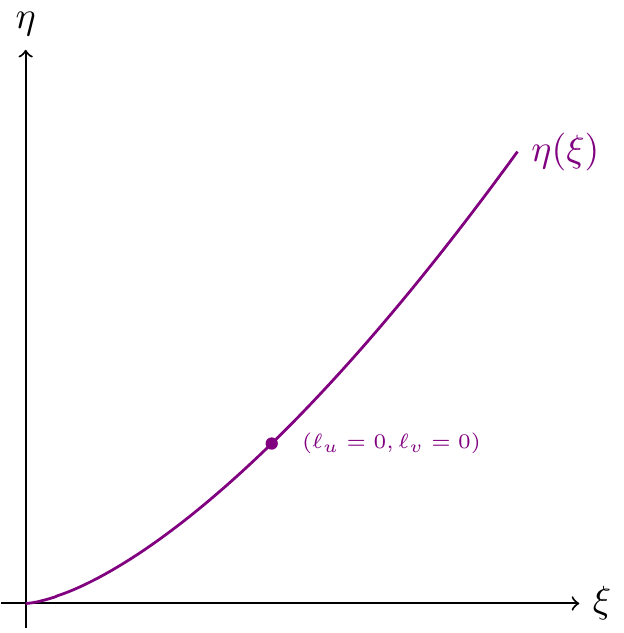} 
    \caption{$ \Theta \not \in L^1(\R^+) $}\label{fig:first}
\end{subfigure}
 \hfill   
\begin{subfigure}[b]{0.4\textwidth}    
\centering
    \includegraphics[scale=0.9]{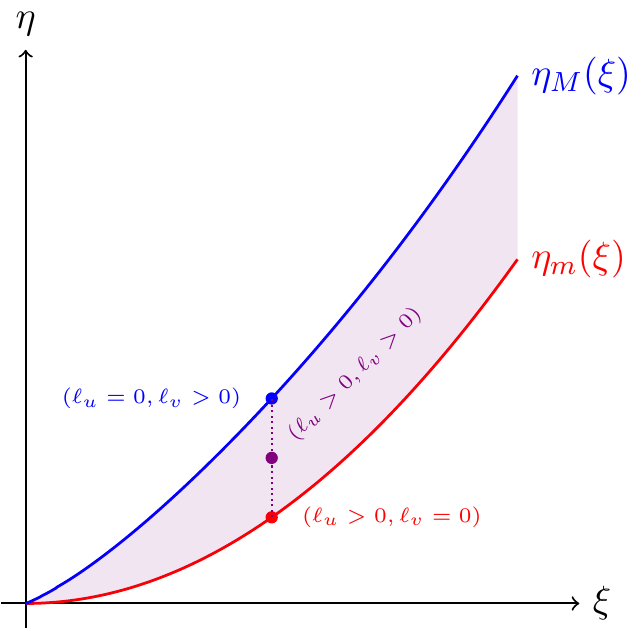} 
    \caption{$ \Theta \in L^1(\R^+) $}\label{fig:second} 
\end{subfigure}
\quad \,
\caption{\small The regions of existence of a globally positive solution in terms of the initial data $ (\xi,\eta) $, in the case of a stochastically complete (\textsc{\subref{fig:first}}) and incomplete (\textsc{\subref{fig:second}}) manifold, with the corresponding behavior of the limits at infinity $ (\ell_u,\ell_v) $ of the components.}
\label{fig:figures}
\end{figure}

\begin{theorem}[Energy rigidity]\label{full-rigidity-proof}
Let $\Mb^n$ ($ n \ge 3 $) be a Cartan-Hadamard model manifold associated to a function $ \psi $, and let $ p,q>0 $ fulfill \eqref{crit-sup}. Suppose that, for some $ (\xi,\eta) \in (0,+\infty)^2 $, there exists a radial solution $(u,v)$ to \eqref{LE sys} such that 
$$
\int_0^{+\infty} u' v' \, \psi^{n-1} \, ds < + \infty \quad \text{or} \quad \int_0^{+\infty} u^{p+1} \, \psi^{n-1} \, ds < + \infty  \quad \text{or} \quad \int_0^{+\infty} v^{q+1} \, \psi^{n-1} \, ds < + \infty \, .
$$
Then $ \Mb^n \equiv \R^n $, 
$$
\frac{1}{p+1} + \frac{1}{q+1} = \frac{n-2}{n} 
$$
and 
\beq\label{var-arg}
\int_0^{+\infty} u' v' \, \psi^{n-1} \, ds + \int_0^{+\infty} u^{p+1} \, \psi^{n-1} \, ds + \int_0^{+\infty} v^{q+1} \, \psi^{n-1} \, ds < + \infty \, .
\eeq 
\end{theorem}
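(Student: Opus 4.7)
The plan is to combine three ingredients: the integration-by-parts (IBP) identities linking the three energies, a Pohozaev-type identity tailored to the radial problem on model manifolds, and a ``modified IBP'' exploiting the convexity $\psi''\ge 0$ of Cartan-Hadamard models, in order to force simultaneously the critical relation $\frac{1}{p+1}+\frac{1}{q+1}=\frac{n-2}{n}$ and $\psi(r)\equiv r$.

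First, I would show that finiteness of any one of the three integrals implies finiteness of all three, with vanishing boundary terms at infinity. Multiplying the first equation in \eqref{Cauchy pb} by $v$ and integrating by parts on $[0,R]$ gives
\[
\int_0^R \psi^{n-1} v^{q+1}\,dr - \int_0^R \psi^{n-1} u'v'\,dr = -\psi^{n-1}(R)\,u'(R)\,v(R) \ge 0,
\]
and symmetrically with $(u,p)\leftrightarrow(v,q)$. Assuming for definiteness $\int_0^{+\infty}\psi^{n-1}v^{q+1}\,dr<+\infty$, the infinite volume of $\Mb^n$ forces $\ell_v=0$, and a splitting-and-majorization argument based on $v(r)\ge v(R)$ for $r\le R$ yields $\psi^{n-1}(R)|u'(R)|v(R)\to 0$; hence, in the limit, $E:=\int_0^{+\infty}\psi^{n-1}u'v'\,dr=\int_0^{+\infty}\psi^{n-1}v^{q+1}\,dr$, and once $\Mb^n\equiv\R^n$ is established, the symmetric argument gives $\int_0^{+\infty}\psi^{n-1}u^{p+1}\,dr=E>0$.

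Next, multiplying the first equation by $\psi v'$ and the second by $\psi u'$, summing, and integrating by parts on $[0,R]$ yields the Pohozaev-type identity
\[
B(R) = n\int_0^R\psi^{n-1}\psi'\left[\frac{v^{q+1}}{q+1}+\frac{u^{p+1}}{p+1}\right]dr - (n-2)\int_0^R\psi^{n-1}\psi'\,u'v'\,dr,
\]
where $B(R):=\psi^n(R)\big[\frac{v^{q+1}(R)}{q+1}+\frac{u^{p+1}(R)}{p+1}+u'(R)v'(R)\big] \ge 0$. Splitting $\psi^{n-1}\psi'=\psi^{n-1}+\psi^{n-1}(\psi'-1)$, invoking the IBPs above, and passing to the limit along a subsequence $R_k\to+\infty$ for which $B(R_k)\to 0$, I obtain
\[
[nS-(n-2)]E + n\int_0^{+\infty}\psi^{n-1}(\psi'-1)\left[\frac{v^{q+1}}{q+1}+\frac{u^{p+1}}{p+1}\right]dr - (n-2)\int_0^{+\infty}\psi^{n-1}(\psi'-1)\,u'v'\,dr = 0,
\]
with $S:=\frac{1}{p+1}+\frac{1}{q+1}$. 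In parallel, a ``modified IBP,'' obtained by multiplying the first equation in \eqref{Cauchy pb} by $(\psi'-1)v$ and integrating by parts (boundary contributions vanish at $0$ by $\psi(0)=0$ and at $+\infty$ by the previous step), produces
\[
\int_0^{+\infty}\psi^{n-1}(\psi'-1)\big[v^{q+1}-u'v'\big]\,dr = \int_0^{+\infty}\psi^{n-1}\psi''\,u'v\,dr \le 0,
\]
thanks to $\psi''\ge 0$, $u'\le 0$ and $v\ge 0$, with an analogous bound for $u^{p+1}$ in place of $v^{q+1}$.

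Setting $I:=\int_0^{+\infty}\psi^{n-1}(\psi'-1)u'v'\,dr \ge 0$, the modified IBPs give $\int\psi^{n-1}(\psi'-1)v^{q+1}\,dr\le I$ and $\int\psi^{n-1}(\psi'-1)u^{p+1}\,dr\le I$, whence substitution into the Pohozaev relation and rearrangement yield
\[
[(n-2)-nS](E+I)\le 0.
\]
By \eqref{crit-sup} one has $(n-2)-nS\ge 0$, and strict monotonicity of $u,v$ on $(0,+\infty)$ (derived directly from \eqref{Cauchy pb}, since $u^p,v^q>0$ there) gives $E>0$. We therefore conclude $(n-2)=nS$, i.e., the critical relation. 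At this level all the preceding inequalities become equalities; in particular $\int_0^{+\infty}\psi^{n-1}\psi''\,u'v\,dr=0$, and since the integrand has constant sign with $u'<0$, $v>0$ on $(0,+\infty)$, this forces $\psi''\equiv 0$; together with $\psi(0)=0$ and $\psi'(0)=1$ this gives $\psi(r)\equiv r$, i.e., $\Mb^n\equiv\R^n$. The main obstacle is the first step, namely passing from finiteness of the cross-energy $\int u'v'\psi^{n-1}\,dr$ alone to finiteness of the two diagonal energies, and selecting a subsequence along which the Pohozaev boundary $B(R_k)$ vanishes; both require careful a priori estimates combining monotonicity of $(u,v)$, the ODE structure of \eqref{Cauchy pb}, and the infinite-volume property of Cartan-Hadamard manifolds.
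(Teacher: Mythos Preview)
Your route is genuinely different from the paper's. The paper proves the contrapositive: assuming $\Mb^n\not\equiv\R^n$ or strict supercriticality, it shows $\int_0^\infty u'v'\,\psi^{n-1}\,dr=+\infty$ directly (which, via your IBP identities, forces the other two integrals to diverge). The key tool is not a Pohozaev identity with weight $\psi'$, but the Pohozaev \emph{function} $P_{(u,v)}$ of Proposition~\ref{prop: der P}, whose weight is $\int_0^r\psi^{n-1}$; its strict negativity gives $\psi^{n-1}\big(\tfrac{uv'}{p+1}+\tfrac{u'v}{q+1}\big)\le -K_0<0$ for large $r$, and then, were $\int u'v'\psi^{n-1}$ finite, one of $\psi^{n-1}uv'$ or $\psi^{n-1}u'v$ would have a strictly negative limit, giving $\int u'v'\psi^{n-1}\ge K_1\int(-u'/u)=K_1\log(u(r_1)/u(r))\to+\infty$ whenever the corresponding component vanishes. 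The argument then splits according to stochastic completeness (both components vanish by Corollary~\ref{prop: sc to 0}) or incompleteness (one limit is strictly positive by Proposition~\ref{not-both-vanish}, and a separate real-analysis lemma, Lemma~\ref{tl-state}, supplies the divergence).

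Your approach has gaps that are structural rather than merely technical. First, the modified-IBP boundary term is $\psi^{n-1}(R)\,u'(R)\,(\psi'(R)-1)\,v(R)$: you control $\psi^{n-1}|u'|v$, but the extra factor $\psi'(R)-1$ is unbounded on, e.g., the hyperbolic space ($\psi'=\cosh r$), so neither this boundary term nor the $(\psi'-1)$-weighted integrals $I$ and $\int\psi^{n-1}(\psi'-1)v^{q+1}$ need be finite; your limiting Pohozaev relation may thus be an indeterminate $\infty-\infty$. Second, to rewrite $n\int\psi^{n-1}\big[\tfrac{v^{q+1}}{q+1}+\tfrac{u^{p+1}}{p+1}\big]$ as $nSE$ you need $\int u^{p+1}\psi^{n-1}=E$, which you only claim \emph{after} concluding $\Mb^n\equiv\R^n$; this is circular. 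Concretely, on a stochastically incomplete manifold with $\ell_v=0$ and $\ell_u>0$ (a case that does occur, see \eqref{upvz} in Theorem~\ref{teo-cs-interv}), infinite volume forces $\int u^{p+1}\psi^{n-1}=+\infty$ regardless of whether $\int v^{q+1}\psi^{n-1}$ is finite, so your Pohozaev balance simply cannot be set up. The paper's use of the weight $\int_0^r\psi^{n-1}$ rather than $\psi'$, together with its contrapositive structure, sidesteps both obstructions.
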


\begin{remark}\label{R1}
We point out that on $\Mb^n \equiv \R^n$ radial solutions do comply with \eqref{var-arg} for every critical pair $(p,q)$, see \cite{Lio}. They can be obtained through a variational argument combined with a scaling property, and are characterized as extremals of higher-order Sobolev inequalities. As already observed, up to translations, radial solutions form a $1$-parameter family (the parameter being the value of one component, say $u$, at $r=0$), and their asymptotic behavior is understood \cite{HulVdV}. Differently from what happens in the scalar case \eqref{LE eq}, their explicit expression is however unknown in general. 

It is also worth to mention that any solution to \eqref{LE sys} on $\R^n$ with $(u,v) \in L^{p+1}(\R^n) \times L^{q+1}(\R^n)$ is radially symmetric \cite{CheLiOu}. This is proved by using an integral version of the moving planes method, and seems hardly adaptable on general manifolds.
\end{remark}


\medskip 

We now state a key preliminary proposition regarding the ``first zeros'' of a solution to \eqref{Cauchy pb}, which may be of independent interest and will play an important role in the proof of our main results.

\begin{proposition}\label{thm: non-ex bdd}
Let $\Mb^n$ ($ n \ge 3 $) be a Cartan-Hadamard model manifold associated to a function $ \psi  $, and let $ p,q>0 $ fulfill \eqref{crit-sup}. Then, given any $(\xi,\eta) \in (0,+\infty)^2$, problem \eqref{Cauchy pb} coupled with
\[
u( R )=v( R ) = 0 \qquad \text{for some $R>0$}
\]
has no positive solution on $(0,R)$.
\end{proposition}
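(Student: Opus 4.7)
The plan is to derive a Pohozaev-type identity tailored to the model manifold and combine it with an auxiliary integral identity whose favorable sign encodes the Cartan-Hadamard hypothesis. Suppose for contradiction that such a solution exists, so $u,v>0$ on $(0,R)$ and $u(R)=v(R)=0$. From the first ODE, $(\psi^{n-1}u')(0)=0$ and $(\psi^{n-1}u')'=-\psi^{n-1}v^q<0$ on $(0,R)$, whence $u'<0$ on $(0,R]$; symmetrically $v'<0$, and in particular $u'(R)\,v'(R)>0$.

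I would then test the first ODE against $\psi v'$ and the second against $\psi u'$, integrate over $(0,R)$, sum, and integrate by parts. The condition $\psi(0)=0$ kills all boundary terms at the origin, while $u(R)=v(R)=0$ eliminates most terms at $R$. After rearranging (using $v^q v'=(v^{q+1}/(q+1))'$ and $\int_0^R \psi^n(u'v')'\,dr=\psi^n(R)u'(R)v'(R)-n\int_0^R\psi^{n-1}\psi' u'v'\,dr$), one arrives at the Pohozaev-type identity
\[
\psi^n(R)\,u'(R)\,v'(R) \;=\; -(n-2)\,A \,+\, \frac{n}{p+1}\,B \,+\, \frac{n}{q+1}\,C,
\]
where $A$, $B$, $C$ denote the integrals of $u'v'$, $u^{p+1}$, $v^{q+1}$ against the weight $\psi^{n-1}\psi'\,dr$ on $(0,R)$.

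To bridge this $\psi^{n-1}\psi'$ weight with the native $\psi^{n-1}$ appearing in the ODEs, I would test the first ODE against $\psi' v$ (rather than simply $v$) and integrate by parts; the boundary terms vanish by $\psi(0)=0$ and $v(R)=0$, giving
\[
A \;=\; C \;-\; \int_0^R \psi^{n-1}\psi''\,u'\,v\,dr.
\]
The Cartan-Hadamard assumption means $\psi''\geq 0$; together with $u'\leq 0$ and $v\geq 0$ this forces $A\geq C$. Testing the second ODE against $\psi' u$ yields symmetrically $A\geq B$. Plugging $B,C\leq A$ into the Pohozaev identity,
\[
\psi^n(R)\,u'(R)\,v'(R) \;\leq\; \left[\,n\left(\frac{1}{p+1}+\frac{1}{q+1}\right)-(n-2)\,\right] A \;\leq\; 0
\]
by \eqref{crit-sup} and $A\geq 0$, contradicting the strict positivity of the left-hand side established in the preliminary step.

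The principal obstacle is the mismatch of weights: the natural Pohozaev multiplier $\psi\partial_r$ on the model (the analog of the Euclidean dilation $x\cdot\nabla$) generates integrals weighted by $\psi^{n-1}\psi'$, not by the native volume weight $\psi^{n-1}$, so the customary multiplier identities (testing against $u$ or $v$) cannot be combined with it directly. Substituting $\psi' u$ and $\psi' v$ as test functions corrects the weight mismatch, at the price of an extra curvature-type remainder $\int\psi^{n-1}\psi'' u'v\,dr$ whose favorable sign is precisely the analytic manifestation of the nonpositive sectional curvature (Cartan-Hadamard) hypothesis.
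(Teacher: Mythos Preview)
Your argument is correct. Both your proof and the paper's are Pohozaev-type, but the implementations differ. The paper introduces a \emph{pointwise} Pohozaev function
\[
P_{(u,v)}(r) = \left(\int_0^r \psi^{n-1}\,ds\right)\left[u'v' + \tfrac{u^{p+1}}{p+1} + \tfrac{v^{q+1}}{q+1}\right] + \psi^{n-1}\left(\tfrac{uv'}{p+1}+\tfrac{u'v}{q+1}\right),
\]
computes $P_{(u,v)}'=K(r)\,u'v'$ with $K\le 0$ (the curvature entering via the identity $\int_0^r\psi^{n-1}\,ds=\frac{\psi^n}{n\psi'}+\frac1n\int_0^r\frac{\psi^n\psi''}{(\psi')^2}\,ds$), and concludes by evaluating $P_{(u,v)}(R)$. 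You instead work with integral multiplier identities: the pair $\psi v',\psi u'$ gives the main Pohozaev identity with weight $\psi^{n-1}\psi'$, and the pair $\psi'v,\psi'u$ supplies the auxiliary inequalities $A\ge B$, $A\ge C$, where the curvature term $\int_0^R\psi^{n-1}\psi''\,u'v\,dr$ appears explicitly and has the right sign. Your route is more self-contained and makes the role of $\psi''\ge 0$ very transparent; the paper's route has the payoff that the monotone function $P_{(u,v)}$ is reused later (e.g.\ to derive the lower bound on $uv$ in the stochastically incomplete case).
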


Note that this can be read as a nonexistence result for the radial homogeneous Dirichlet problem in Riemannian balls. 

\medskip

We conclude with a result in the spirit of \cite[Theorem 2.2]{BeFeGr}, which is essentially a consequence of our methods of proof, showing that the Cartan-Hadamard assumption can be slightly relaxed.

\begin{corollary}\label{CH-remove}
Let $\Mb^n$ ($ n \ge 3 $) be a noncompact model manifold associated to a function $ \psi $, and let $ p,q>0 $ fulfill \eqref{crit-sup}. Then Theorems \ref{monotone-coro}, \ref{teo-cs-interv}, \ref{full-rigidity-proof} and Proposition \ref{thm: non-ex bdd} still hold provided the function
\beq\label{vol-convex}
\mathcal{V}(r) := \left( \int_0^r \psi^{n-1} \, ds \right)^{\frac{pq-1}{2(p+1)(q+1)}}  \qquad \forall r \in [0,+\infty)
\eeq
is convex.
\end{corollary}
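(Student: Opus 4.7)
The plan is to revisit the proofs of Theorems \ref{monotone-coro}, \ref{teo-cs-interv}, \ref{full-rigidity-proof} and Proposition \ref{thm: non-ex bdd}, tracking down the exact role played by the Cartan-Hadamard hypothesis, and then to verify that it can be systematically replaced by the convexity of $\mathcal{V}$. All ``soft'' ingredients used in our arguments --- local existence and uniqueness for the Cauchy problem \eqref{Cauchy pb}, continuous dependence on the initial data, monotonicity of $u$ and $v$ together with the existence of the limits $\ell_u$, $\ell_v$, and the connection between $\Theta \in L^1(\R^+)$ and stochastic (in)completeness of $\Mb^n$ --- rely only on $\psi$ being positive and smooth on $(0,+\infty)$, with $\psi(0)=0$ and $\psi'(0)=1$. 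They are therefore unaffected by the removal of the convexity of $\psi$.

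The Cartan-Hadamard hypothesis enters the proofs in one single structural place, namely, the monotonicity along the trajectories of \eqref{Cauchy pb} of a Pohozaev-type functional $P = P(r;u,v,u',v')$. The exponent $\frac{pq-1}{2(p+1)(q+1)}$ in \eqref{vol-convex} is tuned exactly so that, after using the differential equations in \eqref{Cauchy pb}, the derivative of $P$ collapses to an expression of the form
$$
P'(r) \;=\; -\, c(r)\, \mathcal{V}''(r),
$$
where $c(r)\ge 0$ depends nonnegatively on $u,v,u',v'$. In the Cartan-Hadamard setting, $\psi$ convex forces $\mathcal{V}$ convex, but in the above identity only $\mathcal{V}''\ge 0$ is actually used. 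Therefore, the monotonicity of $P$ --- which underlies (i) the non-existence of first-zero solutions in Proposition \ref{thm: non-ex bdd}, (ii) the identification of the existence regions in Theorems \ref{monotone-coro} and \ref{teo-cs-interv}, and (iii) the energy rigidity of Theorem \ref{full-rigidity-proof} --- survives as soon as $\mathcal{V}$ is convex. The proofs of the four statements then carry over verbatim, with the sole replacement of ``$\psi$ convex'' by ``$\mathcal{V}$ convex'' wherever the monotonicity of $P$ is invoked.

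The main obstacle lies in the rigidity part of Theorem \ref{full-rigidity-proof}. Here one must show that the existence of a finite-energy radial solution forces equality in the above monotonicity formula, hence $\mathcal{V}'' \equiv 0$. This yields $\int_0^r \psi^{n-1}\, ds = c\, r^{1/\gamma}$ with $\gamma = \frac{pq-1}{2(p+1)(q+1)}$, so that $\psi(r) = c'\, r^{(1/\gamma - 1)/(n-1)}$ for some $c'>0$. Enforcing $\psi'(0)=1$ forces $(1/\gamma-1)/(n-1)=1$, i.e., $\gamma = 1/n$, which by a direct algebraic manipulation is equivalent to the critical condition \eqref{crit}; simultaneously, it produces $\psi(r) \equiv r$, and hence $\Mb^n \equiv \R^n$. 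In the strictly supercritical regime, one has $\gamma > 1/n$, and the same computation shows that $\mathcal{V}'' \equiv 0$ is incompatible with the boundary conditions on $\psi$, so no finite-energy solution exists on any such model, which completes the proof of the corollary.
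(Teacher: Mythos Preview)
Your proposal is correct and follows the same route as the paper: identify that the Cartan--Hadamard assumption enters only through the sign of $P_{(u,v)}'$, rewrite this sign condition as $\mathcal{V}''\ge 0$, and recover $\Mb^n\equiv\R^n$ together with criticality from $\mathcal{V}''\equiv 0$ via the constraint $\psi'(0)=1$. Two small points the paper spells out but you leave implicit: convexity of $\mathcal{V}$ forces $\int_0^\infty\psi^{n-1}\,ds=+\infty$ (needed when invoking Lemma~\ref{tl-state} in the proof of Theorem~\ref{full-rigidity-proof}), and in the stochastically incomplete setting your own rigidity computation yields $\mathcal{V}''\not\equiv 0$, so the strict Pohozaev inequality behind \eqref{eq-prod} is available for the proof of Theorem~\ref{teo-cs-interv}.
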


\begin{remark}
In the critical case, the exponent in formula \eqref{vol-convex} attains its minimum value (within the critical-supercritical regime), which is precisely $ \frac{1}{n-1} $. After some routine calculations (see \emph{e.g.}~the proof of Proposition \ref{prop: der P} below), one can check that the convexity of $ \mathcal{V} $ is equivalent to 
$$
 \int_0^r \frac{\psi^n \, \psi''}{\left(\psi'\right)^2}\,ds \ge 0 \qquad \forall r \in (0,+\infty) \, .
$$
In particular, we emphasize the fact that there is room for $ \psi'' $ to be negative somewhere, which means that the (radial) curvatures of $ \Mb^n $ are allowed to be positive is some small region. Clearly, this is \emph{a fortiori} admissible in the supercritical regime.  
\end{remark}

\subsection{Paper organization} 
We devote Section \ref{sec: prel} to the proof of some useful inequalities and local existence results for the solutions to \eqref{Cauchy pb} (including Proposition \ref{thm: non-ex bdd}). In Section \ref{esistenza-base} we show that, for a given $ \xi>0 $, there exists at least one $ \eta>0 $ for which \eqref{Cauchy pb} yields a globally positive solution. This will require a number of preliminary technical tools. In Section \ref{gps} we prove our main results regarding the complete structure of the region of global positivity in the initial-data space $ (\xi,\eta) $ and the asymptotics of solutions as $ r \to +\infty $, that is Theorems \ref{monotone-coro} and \ref{teo-cs-interv}. Finally, Section \ref{rigid} contains the proof of the rigidity Theorem \ref{full-rigidity-proof}, and in Section \ref{CG} we establish the generalization of our main results stated in Corollary \ref{CH-remove}.

\medskip

For notational convenience, from here on we set $ a \vee b := \max\{ a,b \}  $ and $ a \wedge b := \min\{ a,b \}  $. 

\section{Preliminary properties of radial solutions}\label{sec: prel}

In this section, we first establish some basic local existence results for problem \eqref{Cauchy pb}, and then we focus on a key Pohozaev-type inequality and its consequences. 

From here on, unless otherwise specified, we take for granted that $ \Mb^n $ is a Cartan-Hadamard model manifold of dimension $ n \ge 3 $. In particular, we observe that the differential equations appearing in the system \eqref{Cauchy pb} can be rewritten as 
\beq\label{radial eq}
u'' + (n-1) \frac{\psi'}{\psi} \, u' + |v|^{q-1} v = 0 \, , \qquad v'' + (n-1) \frac{\psi'}{\psi} \, v' + |u|^{p-1} u = 0 \, ,
\eeq
or, by integrating,
\beq\label{eq int form}
\psi^{n-1}(r) \, u'(r) = - \int_0^r  |v|^{q-1}v \, \psi^{n-1}  \, ds \, , \qquad  \psi^{n-1}(r) \, v'(r) = - \int_0^r |u|^{p-1}u\, \psi^{n-1} \, ds \, .
\eeq
In the sequel, both \eqref{radial eq} and \eqref{eq int form} will be very useful to our purposes. 

\subsection{Local existence and continuation lemmas}\label{loc-exi}

The existence and uniqueness of a local positive solution to \eqref{Cauchy pb} is rather classical, but for the sake of completeness we provide a full proof.

\begin{lemma}\label{lem: lem}
Let $p,q>0$. Then for every $(\xi,\eta) \in (0,+\infty)^2$ there exists $\rho \equiv  \rho(\xi, \eta)>0$, depending continuously on $(\xi,\eta)$, such that problem \eqref{Cauchy pb} has a unique positive solution for $r \in (0,\rho)$. 
\end{lemma}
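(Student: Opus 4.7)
The plan is to recast \eqref{Cauchy pb} as an integral fixed-point equation and apply the Banach contraction principle on a small interval around $r=0$. Integrating each equation in \eqref{Cauchy pb} twice, using the initial conditions $u'(0)=0=v'(0)$ and $u(0)=\xi$, $v(0)=\eta$, one rewrites the Cauchy problem equivalently as
$$u(r) = \xi - \int_0^r \frac{1}{\psi^{n-1}(s)}\int_0^s \psi^{n-1}(\tau)\,|v(\tau)|^{q-1}v(\tau)\,d\tau\,ds \, ,$$
and symmetrically for $v$ with $p$ and $u$ replacing $q$ and $v$. Since $\psi(s)\sim s$ as $s\to 0^+$, the kernel $\Theta(s)=\psi^{1-n}(s)\int_0^s\psi^{n-1}\,d\tau$ defined in \eqref{def-theta} is continuous on $[0,+\infty)$ with $\Theta(0)=0$, so the above double integrals are well defined and equal to $O(r^2)$ near the origin.

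Next I would work in the closed subset
$$K_\rho := \left\{(u,v)\in C([0,\rho];\R)^2 \ : \ \tfrac{\xi}{2}\le u\le 2\xi,\ \tfrac{\eta}{2}\le v\le 2\eta\right\}$$
of $C([0,\rho];\R)^2$ endowed with the sup norm, and show that the operator $T$ defined by the right-hand sides above maps $K_\rho$ into itself and is a contraction, provided $\rho$ is small enough. The self-mapping property is immediate from the bound $\int_0^r\Theta(s)\,ds\le C_\psi\, r^2$ valid near $0$, multiplied by the trivial estimates $|v|^q\le(2\eta)^q$ and $|u|^p\le(2\xi)^p$ on $K_\rho$, which can be made smaller than $\xi/2$ and $\eta/2$ respectively by choosing $\rho$ small. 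For the contraction estimate, the crucial observation is that on $K_\rho$ both components stay bounded away from zero; hence the nonlinearities $t\mapsto |t|^{p-1}t$ and $t\mapsto |t|^{q-1}t$ are Lipschitz on the relevant range with constants depending only on $p,q,\xi,\eta$, and this remains true even when $p$ or $q<1$ (a case in which these maps fail to be Lipschitz at $0$).

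Banach's theorem then yields a unique fixed point $(u,v)\in K_\rho$, which is a positive local solution to \eqref{Cauchy pb}. Uniqueness in the full class of positive continuous solutions follows because any such solution is, by continuity, eventually trapped in $K_\rho$ on a sufficiently small interval, and on $(0,\rho)$ the system \eqref{radial eq} is a regular ODE with locally Lipschitz right-hand side (since $\psi>0$ and $u,v>0$ there), so the trajectories coincide. The continuous dependence of $\rho$ on $(\xi,\eta)$ can be enforced by an explicit definition, for instance choosing $\rho(\xi,\eta)$ as the solution (divided by $2$ for safety) of an implicit equation of the form
$$\max\{(2\xi)^p,(2\eta)^q\}\cdot \int_0^{\rho}\Theta(s)\,ds \ = \ \tfrac{1}{4}\min\{\xi,\eta\} \, ,$$
whose left-hand side is continuous and strictly increasing in $\rho$ and jointly continuous in $(\xi,\eta)$.

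I do not foresee any genuinely hard step: the mild singularity at $r=0$ is encoded in the continuity of $\Theta$ at $0$, while the only subtle point is the non-Lipschitzness of $t\mapsto|t|^{p-1}t$ at the origin when $p<1$, which is circumvented by confining the iteration to the positive cone $K_\rho$ bounded away from zero.
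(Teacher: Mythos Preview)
Your proposal is correct and follows essentially the same route as the paper: recast \eqref{Cauchy pb} as an integral fixed-point equation, apply the Banach contraction principle on a closed subset of $C([0,\rho];\R^2)$ consisting of functions bounded away from zero (so that the possibly non-Lipschitz nonlinearities become Lipschitz), and read off the continuous dependence of $\rho$ from the explicit estimates. The only cosmetic differences are the shape of the closed set (the paper uses the ball $\|(u,v)-(\xi,\eta)\|_\infty\le\frac12\,\xi\wedge\eta$ rather than your box $[\xi/2,2\xi]\times[\eta/2,2\eta]$) and that you spell out the implicit equation defining $\rho$ and the full-class uniqueness argument in slightly more detail.
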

\begin{proof}
Let 
\[
X:= \left\{(u,v) \in C\!\left([0,\rho] ; \R^2\right): \  \left\| (u,v)-(\xi,\eta) \right\|_\infty \le \frac12 \, \xi \wedge \eta  \right\},
\]
where $\rho$ has to be chosen later, and $\| (\cdot,\cdot) \|_\infty$ denotes the norm obtained as the maximum between the usual $ L^\infty $ norms of the two components on $[0,\rho]$. Note that, by construction, since $\xi,\eta>0 $ the set $ X $ is formed by positive functions. Let also $F \equiv  (F_1,F_2):X \to C\!\left([0,\rho] ; \R^2\right)$ be defined by 
\[
\begin{aligned}
F_1(u,v)(r) & := \xi- \int_0^r \frac{1}{\psi^{n-1}(s)} \left( \int_0^s v^q \,  \psi^{n-1} \, dt \right) ds \, , \\
F_2(u,v)(r) & := \eta- \int_0^r\frac{1}{\psi^{n-1}(s)}  \left( \int_0^s u^p \, \psi^{n-1}  \, dt \right) ds \, . 
\end{aligned}
\]
It is not difficult to check that
\begin{equation}\label{eul1}
\begin{aligned}
\left|F_1(u,v)(r)-\xi\right| & \le \left(\frac32 \, \eta\right)^q \int_0^r \Theta \, ds \, ,  \\
\left|F_2(u,v)(r)-\eta\right| & \le \left(\frac32 \,  \xi\right)^p \int_0^r \Theta \, ds \, ,
\end{aligned}
\end{equation}
and 
\begin{equation}\label{eul2}
\begin{aligned}
\left|F_1(u_1,v_1)(r)- F_1(u_2,v_2)(r)\right| \le \left(\sup_{\tau \in \left[\frac12 \eta, \frac32 \eta \right]}  q \, \tau^{q-1} \right) \left\|v_1-v_2\right\|_\infty \int_0^r \Theta \, ds  \, , \\
\left|F_2(u_1,v_1)(r)- F_2(u_2,v_2)(r)\right| \le \left(\sup_{\tau \in \left[\frac12 \xi, \frac32 \xi \right]} p \, \tau^{p-1}\right) \left\|u_1-u_2\right\|_\infty \int_0^r \Theta \,ds  \, ,
\end{aligned}
\end{equation}
for every $r \in [0,\rho]$ and every $ (u_1,v_1),(u_2,v_2) \in X $. Since $\Theta$ is locally bounded (it is smooth on $ (0,+\infty) $ and behaves like $ r^2 $ near $r=0$), from \eqref{eul1} and \eqref{eul2} it follows easily that there exists $\rho>0$, depending on $\psi$ and $n$, and in a continuous fashion on $(\xi,\eta)$, such that $F$ is a contraction mapping from $X$ into itself. Thus, by the Banach fixed-point theorem, $F$ has a unique fixed point in $X$, which is clearly a positive solution  of \eqref{Cauchy pb} in $ (0,\rho) $.  
\end{proof}

The local solution constructed above is always positive for small $r$, and by classical ODE theory can be extended to a maximal interval $[0,T)$, possibly changing sign. Note that, if $ p \wedge q \ge  1 $, the extension and its lifetime $ T>0 $ only depend on the initial data $ (\xi,\eta) $. However, if $ p \wedge q <  1 $, the uniqueness of the solution may fail past any point $\bar r>0$ where either $u(\bar r)=0$ or $v(\bar r)=0$; thus, the value of $T$ can in this case also depend on the specific chosen extension, and not only on $ (\xi,\eta) $. Nevertheless, we will see that it is still possible to obtain a quantitative lower bound on $T$. 

Let us now set 
\beq\label{def R}
R_{\xi,\eta}
:= 
\sup \left\{r \in [0,T): \ u(t) \wedge v(t) > 0 \quad \forall t \in (0,r)\right\} ,
\eeq
namely the size of the maximal positivity interval of the solution. Note that, by definition, if $ R_{\xi,\eta} < +\infty  $ either $u\!\left(R_{\xi,\eta}\right)=0$ or $v\!\left(R_{\xi,\eta}\right)=0$, and $R_{\xi,\eta}$ is uniquely determined by $\xi$ and $\eta$ (as opposed to $ T $), since the solution is unique as long as it stays positive. Moreover, still in the case where $R_{\xi,\eta} $ is finite, the continuation theorem for ODE ensures that $T>R_{\xi,\eta}$, and one between $u$ and $v$ must necessarily change sign past $r=R_{\xi,\eta}$, due to the strong maximum principle. 

As a straightforward consequence of the definition of $ R_{\xi,\eta} $ and \eqref{eq int form}, we have the following fundamental monotonicity result, along with the characterization of the limits at infinity in the stochastically complete case. 

\begin{lemma}\label{lem: mon}
Let $ p,q>0 $ and $(\xi,\eta) \in (0,+\infty)^2$. Let $(u,v)$ solve \eqref{Cauchy pb}. Then 
\[
u'(r) <0 \quad \text{and} \quad v'(r)<0 \qquad \forall r \in \left(0 , R_{\xi,\eta} \right) . 
\]
In particular, if $ R_{\xi,\eta}=+\infty $ (namely $ (u,v) $ is a globally positive solution) there exist finite the limits
\[
\ell_u := \lim_{r \to +\infty} u(r) \ge 0  \, , \qquad \ell_v := \lim_{r \to +\infty} v(r) \ge 0\, .
\]
\end{lemma}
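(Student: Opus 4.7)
\medskip

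\textbf{Plan.} The statement is an immediate consequence of the integral form \eqref{eq int form} of the system, so the proof should essentially be a short bookkeeping argument.

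First, I would fix $r \in (0, R_{\xi,\eta})$ and use the first identity in \eqref{eq int form}, namely
\[
\psi^{n-1}(r)\, u'(r) = -\int_0^r |v|^{q-1} v\, \psi^{n-1}\, ds.
\]
By the very definition of $R_{\xi,\eta}$ in \eqref{def R}, on the open interval $(0, R_{\xi,\eta})$ both components $u$ and $v$ are strictly positive, so $|v|^{q-1}v = v^q > 0$ there. Combined with $\psi^{n-1}(s) > 0$ for $s > 0$, this makes the integrand strictly positive on $(0, r)$, hence the integral is strictly positive. Dividing by $\psi^{n-1}(r) > 0$ yields $u'(r) < 0$. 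The identical argument, using the second identity in \eqref{eq int form} and $u^p > 0$ on $(0, R_{\xi,\eta})$, gives $v'(r) < 0$.

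For the second half, assume $R_{\xi,\eta} = +\infty$. Then by what we just proved, $u$ is strictly decreasing on $(0, +\infty)$, and by definition of $R_{\xi,\eta}$ it is bounded below by $0$. Hence the monotone limit
\[
\ell_u := \lim_{r \to +\infty} u(r) \ge 0
\]
exists and is finite. An identical monotone-convergence argument applies to $v$, giving $\ell_v \ge 0$.

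There is no real obstacle here; the only subtlety is noting that $R_{\xi,\eta} > 0$ (guaranteed by Lemma \ref{lem: lem}), so that the interval $(0, R_{\xi,\eta})$ on which we work is genuinely nonempty and the integrals above are positive as soon as $r > 0$.
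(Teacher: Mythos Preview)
Your proposal is correct and matches the paper's approach exactly: the paper does not give a written proof but simply states that the lemma is ``a straightforward consequence of the definition of $R_{\xi,\eta}$ and \eqref{eq int form}'', which is precisely the argument you have spelled out.
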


\begin{corollary}\label{prop: sc to 0} 
	Let $ \Theta \not \in L^1(\R^+) $. Let $ p,q>0 $ and $(\xi,\eta) \in (0,+\infty)^2$. Then, if $(u,v)$ is a globally positive solution to \eqref{Cauchy pb}, it satisfies
	$$
	\ell_u = \ell_v = 0 \, .
	$$
\end{corollary}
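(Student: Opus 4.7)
The plan is to argue by contradiction using the integral form \eqref{eq int form} and the fact that, under stochastic completeness, the ``effective volume'' function $\Theta$ has divergent integral. By Lemma \ref{lem: mon}, a globally positive solution has both components monotonically decreasing, so the limits $\ell_u, \ell_v \ge 0$ exist. Suppose, toward a contradiction, that $\ell_u > 0$. Then $u(r) \ge \ell_u$ for every $r \ge 0$, and the second identity in \eqref{eq int form} gives
\[
\psi^{n-1}(r) \, v'(r) \;=\; -\int_0^r u^p \, \psi^{n-1} \, ds \;\le\; -\ell_u^{\,p} \int_0^r \psi^{n-1} \, ds \qquad \forall r > 0 \, .
\]
Dividing by $\psi^{n-1}(r)$ and recalling the definition \eqref{def-theta} of $\Theta$, this rewrites as $v'(r) \le -\ell_u^{\,p} \, \Theta(r)$ for all $r>0$.

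Integrating over $(0,r)$ and using $v(0) = \eta$ yields
\[
v(r) \;\le\; \eta - \ell_u^{\,p} \int_0^r \Theta(s) \, ds \qquad \forall r > 0 \, .
\]
Because $\Theta \not\in L^1(\R^+)$ by assumption, the right-hand side tends to $-\infty$ as $r \to +\infty$, contradicting the positivity of $v$ on $[0,+\infty)$. Hence $\ell_u = 0$. An entirely symmetric argument, using instead the first identity in \eqref{eq int form} together with the lower bound $v(r) \ge \ell_v$, rules out $\ell_v > 0$ and yields $\ell_v = 0$, completing the proof.

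The argument is essentially immediate once one has the integral representation and the monotonicity lemma in hand; there is no real obstacle, as the role of the condition $\Theta \not\in L^1(\R^+)$ fits naturally into the linear lower estimates produced by factoring the constants $\ell_u^{\,p}$ and $\ell_v^{\,q}$ out of the nonlinear integrals.
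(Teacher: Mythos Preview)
Your proof is correct and follows essentially the same approach as the paper: assume by contradiction that one limit, say $\ell_u$, is strictly positive, use the integral representation \eqref{eq int form} together with the monotone lower bound $u\ge \ell_u$ to obtain $-v'(r)\ge \ell_u^{\,p}\,\Theta(r)$, and then exploit $\Theta\notin L^1(\R^+)$ to contradict the global positivity of $v$. The only cosmetic difference is that the paper integrates from $r$ to $+\infty$ (obtaining $v(r)-\ell_v \ge \ell_u^{\,p}\int_r^{+\infty}\Theta\,ds = +\infty$), while you integrate from $0$ to $r$; the underlying idea is identical.
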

 \begin{proof}
 	Suppose by contradiction that one of these limits, say $\ell_u$, is strictly positive (if $\ell_u=0$ and $\ell_v>0$ the argument can be repeated in the same way). Upon integrating \eqref{eq int form} on $(r,+\infty)$, we deduce that 
 	\[
 	v(r) - \ell_v = \int_r^{+\infty} \frac{1}{\psi^{n-1}(s)} \left(\int_0^s u^p \, \psi^{n-1}  \, dt \right) ds  \qquad \forall r>0 \, .
 	\]
 	The above integral can be easily estimated by recalling that $u$ is monotone decreasing due to Lemma \ref{lem: mon}, hence 
 	\[
 	v(r) - \ell_v \ge \ell_u^p \int_r^{+\infty} \Theta \, ds = +\infty \qquad \forall r>0 \, ,
 	\]
 	which is clearly not possible.	
 \end{proof}
 
In the proof of Proposition \ref{thm: main ex}, that is the crucial fact that under \eqref{crit-sup} for every $ \xi>0 $ there exists at least one $ \eta \equiv \eta(\xi) > 0 $ that gives rise to a globally positive solution, we will need the continuous dependence of (suitable extensions of) the solutions to \eqref{Cauchy pb} with respect to $(\xi, \eta)$ also \emph{beyond} the positivity radius $R_{\xi,\eta}$. Again, this follows from standard ODE theory if $  p \wedge q \ge  1 $, but since \eqref{crit-sup} allows one exponent to be strictly smaller than $1$ (at least in dimension $n \ge 5$), we need an argument which covers all these cases. 

\begin{lemma}\label{lem: cont}
Let $ p,q>0 $ and $(\xi,\eta) \in (0,+\infty)^2$. Let $ (\bar u, \bar v) $ be the solution to \eqref{Cauchy pb} provided by Lemma \ref{lem: lem}. Let us fix any $\sigma \in \left(\rho, R_{\xi,\eta}\right)$. Then there exists an extension of $(\bar u, \bar v)$ whose maximal existence interval contains $[0, \sigma + \beta]$, where 
\beq\label{def b}
\beta := \min\left\{\frac{\sigma}{2^q} \left(-1+\sqrt{1+\frac{C_1}{\sigma^2}}\right) , \, \frac{\sigma}{2^p} \left(-1+\sqrt{1+\frac{C_2}{\sigma^2}}\right)\right\}>0 \, ,
\eeq
for some positive constants $C_1,C_2>0$ depending continuously on $(\xi,\eta)$ and independent of $\sigma$.
\end{lemma}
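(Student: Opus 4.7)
The plan is to recast the problem on the interval $[\sigma,\sigma+h]$ as a Schauder-type fixed-point problem with initial data taken at $r=\sigma$, and to choose $h=\beta$ so that the a priori estimates close. The switch to Schauder (rather than Banach, as in the proof of Lemma \ref{lem: lem}) is motivated precisely by the main obstacle: when $p\wedge q<1$, the nonlinearities $|v|^{q-1}v,\,|u|^{p-1}u$ are only Hölder (not Lipschitz) near zero, and $u(\sigma),v(\sigma)$ may be arbitrarily close to $0$ when $\sigma$ is close to $R_{\xi,\eta}$; in particular the mere local-existence Lemma \ref{lem: lem} applied at $r=\sigma$ would not provide a quantitative lifetime.

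First I would collect uniform bounds at $r=\sigma$ coming from the fact that $(\bar u,\bar v)$ is monotone and positive on $[0,R_{\xi,\eta})$ (Lemma \ref{lem: mon}): namely $0<u(\sigma)\le\xi$ and $0<v(\sigma)\le\eta$. Using \eqref{eq int form} together with the Cartan-Hadamard inequality $\Theta(r)\le r$ (which follows from $\psi$ being nondecreasing on the convex model, since $\int_{0}^{\sigma}\psi^{n-1}\,ds\le \sigma\,\psi^{n-1}(\sigma)$), I get the $\sigma$-linear derivative bounds
\[
|u'(\sigma)|\le \eta^{q}\Theta(\sigma)\le \eta^{q}\sigma\,,\qquad |v'(\sigma)|\le \xi^{p}\Theta(\sigma)\le\xi^{p}\sigma\,,
\]
which are the source of the $\sigma$-dependence in \eqref{def b}.

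Next I would rewrite the problem on $[\sigma,\sigma+h]$ in integral form, setting
\[
T_{1}(u,v)(r)=u(\sigma)+\int_{\sigma}^{r}\frac{\psi^{n-1}(\sigma)\,u'(\sigma)}{\psi^{n-1}(s)}\,ds-\int_{\sigma}^{r}\frac{1}{\psi^{n-1}(s)}\!\left(\int_{\sigma}^{s}|v|^{q-1}v\,\psi^{n-1}\,dt\right)ds\,,
\]
and analogously $T_{2}(u,v)$ with $p,u$ in place of $q,v$. Consider the closed convex set
\[
X=\left\{(u,v)\in C([\sigma,\sigma+h];\R^{2}):\,\|u-u(\sigma)\|_{\infty}\le\xi,\ \|v-v(\sigma)\|_{\infty}\le\eta\right\},
\]
on which $|u|\le 2\xi$ and $|v|\le 2\eta$. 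Using $\psi^{n-1}(s)\ge\psi^{n-1}(\sigma)$ for $s\ge\sigma$ and $\int_{\sigma}^{s}\psi^{n-1}\,dt\le(s-\sigma)\psi^{n-1}(s)$, the a priori estimates read
\[
\|T_{1}(u,v)-u(\sigma)\|_{\infty}\le |u'(\sigma)|\,h+2^{q-1}\eta^{q}h^{2}\le \eta^{q}\!\left(\sigma h+2^{q-1}h^{2}\right),
\]
and symmetrically for $T_{2}$. Imposing the right-hand sides to be at most $\xi$ and $\eta$ respectively, solving the resulting quadratic inequalities in $h$, and completing the square, one obtains exactly \eqref{def b} with
\[
C_{1}:=\frac{2^{q+1}\xi}{\eta^{q}}\,,\qquad C_{2}:=\frac{2^{p+1}\eta}{\xi^{p}}\,,
\]
which depend continuously on $(\xi,\eta)$ and are independent of $\sigma$.

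Finally, I would apply Schauder's fixed-point theorem: $T:X\to X$ is continuous in the sup-norm (the maps $t\mapsto|t|^{q-1}t,\,|t|^{p-1}t$ are continuous, and the images in $X$ range over a compact set of $\R$), while $T(X)$ is equicontinuous since the explicit formula for $T_{1},T_{2}$ yields uniformly bounded first derivatives on $[\sigma,\sigma+h]$; by Arzelà–Ascoli, $T(X)$ is relatively compact. Hence $T$ admits a fixed point on $[\sigma,\sigma+\beta]$, which by construction satisfies the system together with the Cauchy data $(u(\sigma),v(\sigma),u'(\sigma),v'(\sigma))$ inherited from $(\bar u,\bar v)$. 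Concatenating this fixed point with $(\bar u,\bar v)|_{[0,\sigma]}$ produces a $C^{1}$ extension solving \eqref{Cauchy pb} on $[0,\sigma+\beta]$, and the standard continuation theorem ensures it lies inside the maximal existence interval of some (not necessarily unique, when $p\wedge q<1$) extension, completing the proof.
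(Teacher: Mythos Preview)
Your proof is correct and follows essentially the same route as the paper: recast the system on $[\sigma,\sigma+\beta]$ in integral form, work in a closed convex ball of $C([\sigma,\sigma+\beta];\R^2)$, use the monotonicity of $\psi$ (equivalently $\Theta(\sigma)\le\sigma$) to get the derivative bound at $\sigma$, derive the quadratic self-mapping inequality, and apply Schauder via Arzel\`a--Ascoli. The only difference is cosmetic: you take the ball with componentwise radii $\xi$ and $\eta$, while the paper uses the single radius $\xi\wedge\eta$ for both components; this yields your constants $C_1=2^{q+1}\xi/\eta^{q}$, $C_2=2^{p+1}\eta/\xi^{p}$ in place of the paper's $C_1=2^{q+1}(\xi\wedge\eta)/\eta^{q}$, $C_2=2^{p+1}(\xi\wedge\eta)/\xi^{p}$, hence a slightly larger admissible $\beta$, but the statement only asks for \emph{some} such constants, so nothing changes.
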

\begin{proof}
We let $Y$ denote the subset of $C\!\left([\sigma,\sigma+\beta];\R^2\right)$ consisting of all functions $ (u,v) $ which satisfy 
\beq\label{YYY}
 \left\|(u,v)-(\bar u,\bar v)(\sigma)\right\|_\infty \le \xi \wedge \eta \, , 
\eeq
with $ \sigma $ as in the statement and $\beta>0$ to be chosen later. By what we recalled above, we can assume that $ (\bar u, \bar v)  $ exists up to $ r=\sigma $. Let also $F \equiv  (F_1,F_2):Y \to C\!\left([\sigma,\sigma+\beta] ; \R^2\right)$ be defined by 
\[
\begin{aligned}
F_1(u,v)(r) & := \bar u(\sigma) + \psi^{n-1}(\sigma) \, \bar u'(\sigma) \int_\sigma^r \frac{1}{\psi^{n-1}} \,ds - \int_\sigma^r \frac{1}{\psi^{n-1}(s)} \left( \int_\sigma^s |v|^{q-1} v \, \psi^{n-1}  \, dt \right) ds \, , \\
F_2(u,v)(r) & := \bar v(\sigma) + \psi^{n-1}(\sigma) \, \bar v'(\sigma) \int_\sigma^r \frac{1}{\psi^{n-1}} \,ds - \int_\sigma^r \frac{1}{\psi^{n-1}(s)} \left( \int_\sigma^s |u|^{p-1} u \, \psi^{n-1} \, dt \right) ds \, . 
\end{aligned}
\]
The set $Y$ is clearly closed and convex, and it is easy to check that $F$ is a  continuous mapping from $ Y $ to $C\!\left([\sigma,\sigma+\beta] ; \R^2\right)$,  since $p,q >0$. We aim at showing that $F$ has a precompact image, and that for $\beta>0$ conveniently chosen it maps $Y$ into itself. As a result, the Schauder fixed-point theorem (see \emph{e.g.}~\cite[Corollary 11.2]{GT}) will ensure the existence of a fixed point $( \hat u, \hat v)$ for $F$ in $Y$, which is nothing but a solution of the Cauchy problem 
\[
\begin{cases}
\left(\psi^{n-1}\,\hat u'\right)' + \psi^{n-1} \, |\hat v|^{q-1}  \hat v =0 & \text{ for $r\in (\sigma,\sigma+\beta)$}  \\
\left(\psi^{n-1}\,\hat v'\right)' + \psi^{n-1} \, |\hat u|^{p-1} \hat u =0 & \text{ for $r\in (\sigma,\sigma+\beta)$}  \\
\hat u'(\sigma) = \bar u'(\sigma) \, , \quad \hat v'(\sigma) = \bar v'(\sigma) \, , \\
\hat u(\sigma) = \bar u(\sigma) \, , \quad \hat v(\sigma) = \bar v(\sigma) \, .
\end{cases}
\]
Therefore, because $ \bar{u}(\sigma),\bar{v}(\sigma) > 0 $ and the solution is unique as long as it is positive, we can assert that $ (\hat{u},\hat{v}) $ is the desired extension. Let us then prove that the assumptions of the Schauder fixed-point theorem are met. 

We start by showing that $F$ has a precompact image. Since $\sigma < R_{\xi,\eta} $, by Lemma \ref{lem: mon} we have that $0<\bar u(\sigma)<\xi$ and $0<\bar v(\sigma)<\eta$. Hence, if $ (u,v) \in Y $, it follows that $\| u \|_\infty \le 2 \xi$ and $ \| v \|_\infty \le 2 \eta $. Now, by the definition of $F$, for every $\sigma \le r_0 < r \le \sigma + \beta$ it holds 
\[
\begin{aligned}
\left|F_1(u,v)(r)-F_1(u,v)(r_0)\right| & \le \psi^{n-1}(\sigma) \left| \bar u'(\sigma) \right| \int_{r_0}^r \frac{1}{\psi^{n-1}} \,ds + \int_{r_0}^r \frac{1}{\psi^{n-1}(s)}  \left(\int_\sigma^s |v|^{q} \, \psi^{n-1} \, dt \right) ds \\
& \le C \int_{r_0}^r \frac{1}{\psi^{n-1}} \, ds  + \left(2\eta\right)^q \int_{r_0}^r \Theta \, ds \, ,
\end{aligned}
\]
where $C$ is a positive constant depending on $ \sigma , \xi , \eta , n $ through $ \psi $ and $ \bar{u} $. A similar expression can be derived for $\left|F_2(u,v)(r)-F_2(u,v)(r_0)\right|$ and, recalling the regularity and positivity properties of $\psi$ (along with the fact that $\sigma > \rho>0$), we readily deduce the equicontinuity of $  F(Y) $. Concerning the (quantitative) uniform boundedness, note that by \eqref{eq int form} we have
\[
-\psi^{n-1}(\sigma) \, \bar u'(\sigma) = \int_0^\sigma \bar v^q \, \psi^{n-1}  \, ds \, .
\]
Therefore, thanks to the monotonicity of $\psi$, it follows that 
\beq\label{2391}
\begin{split}
\left| F_1(u,v)(r)-\bar u(\sigma) \right| & \le \psi^{n-1}(\sigma) \left| \bar u'(\sigma) \right| \int_{\sigma}^r \frac{1}{\psi^{n-1}} \,ds + \int_{\sigma}^r \frac{1}{\psi^{n-1}(s)}  \left(\int_\sigma^s |v|^{q} \, \psi^{n-1} \, dt \right) ds \\
& \le \left(\eta^q \int_0^\sigma \psi^{n-1} \,ds\right) \int_{\sigma}^{\sigma+\beta} \frac{1}{ \psi^{n-1}}\,ds  + \left(2\eta\right)^q \int_\sigma^{\sigma+\beta} \frac{1}{\psi^{n-1}(s)} \left(\int_{\sigma}^s \psi^{n-1}\,dt\right)ds \\
& \le \eta^q \sigma \beta + \left(2\eta\right)^q \frac{\beta^2}{2} \, , 
\end{split}
\eeq
for all $ r \in [\sigma,\sigma+\beta] $. Similarly, we obtain
\beq\label{2392}
\left|F_2(u,v)(r)-\bar v(\sigma)\right| \le \xi^p \sigma \beta + \left(2\xi\right)^p \frac{\beta^2}{2} \, .
\eeq 
This proves the $F(Y)$ is also bounded and hence, by the Ascoli-Arzel\`a theorem, it is indeed a precompact subset of $ C\!\left([\sigma,\sigma+\beta] ; \R^2\right) $. 

It remains to show that $F: Y \to Y $ for suitable choice of $\beta>0$. By \eqref{2391} and \eqref{2392}, this is the case if
\[
\left[ \eta^q \sigma \beta + \left(2\eta\right)^q \frac{\beta^2}{2} \right] \vee  \left[ \xi^p \sigma \beta + \left(2\xi\right)^p \frac{\beta^2}{2} \right] \le  \xi \wedge \eta \, .
\]
It is straightforward to verify that such inequality holds for $\beta$ as in \eqref{def b}, provided 
\[
C_1 = \frac{2^{q+1} \, \xi \wedge \eta}{\eta^q} \, , \qquad  C_2 = \frac{2^{p+1} \, \xi \wedge \eta}{\xi^p} \, .
\] 
Note, in particular, that $C_1$ and $C_2$ depend continuously on $(\xi,\eta)$ and are independent of $\sigma$. The proof is thus complete.
\end{proof}

If $R_{\xi,\eta}<+\infty$, by taking limits as $\sigma \to R_{\xi,\eta}^-$ in \eqref{def b} we deduce that the constructed solution to \eqref{Cauchy pb} is defined at least on the interval 
\[
\left[0,  R_{\xi,\eta} + \min\left\{\frac{R_{\xi,\eta}}{2^q} \left(-1+\sqrt{1+\frac{C_1}{R_{\xi,\eta}^2}}\right), \, \frac{R_{\xi,\eta}}{2^p} \left(-1+\sqrt{1+\frac{C_2}{R_{\xi,\eta}^2}}\right)\right\}\right] \supsetneq \left[0,R_{\xi,\eta}\right].
\] 
As already mentioned, such solution must change sign past $ R_{\xi,\eta} $, and it may not be unique beyond this threshold if $ p \wedge q <1$.

\subsection{Some fundamental identities and inequalities}
Given a solution to \eqref{Cauchy pb}, we introduce the associated energy function
$$
F_{(u,v)}(r) := u'(r) v'(r) + \frac{1}{p+1} \left|u(r)\right|^{p+1}  + \frac1{q+1} \left|v(r)\right|^{q+1} ,
$$
along with the \emph{Pohozaev function}
$$
P_{(u,v)}(r) := \left( \int_0^r \psi^{n-1} \, ds \right) F_{(u,v)}(r) + \psi^{n-1}(r) \left( \frac{u(r) v'(r)}{p+1} + \frac{u'(r) v(r)}{q+1}\right).  
$$

\begin{proposition}\label{prop: der P}
Let $ p,q>0 $ and $(\xi,\eta) \in (0,+\infty)^2$. Let $(u,v)$ solve \eqref{Cauchy pb}, with maximal existence interval $[0,T) $. Then 
\beq\label{deriv-energy}
F_{(u,v)}'(r) = - 2(n-1) \, \frac{\psi'(r)}{\psi(r)} \, u'(r) v'(r) 
\eeq
and 
\beq\label{deriv-poho}
P_{(u,v)}'(r) = K(r) u'(r) v'(r)
\eeq
for all $ r \in (0,T) $, where 
\[
K(r) := \left(\frac1{p+1}+\frac1{q+1}-\frac{n-2}{n}\right) \psi^{n-1}(r)- \frac{2(n-1)}{n} \left( \int_0^r \frac{\psi^n \, \psi''}{\left(\psi'\right)^2} \, ds\right) \frac{\psi'(r)}{\psi(r)} \, .
\]
\end{proposition}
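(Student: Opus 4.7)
The plan is to verify both identities by direct differentiation combined with the ODEs satisfied by $(u,v)$. For the energy identity, differentiating $F_{(u,v)}$ yields
\[
F_{(u,v)}' = u''v' + u'v'' + |u|^{p-1}u\,u' + |v|^{q-1}v\,v' \, ,
\]
into which I would substitute $u'' = -(n-1)(\psi'/\psi)\,u' - |v|^{q-1}v$ and the analogous expression for $v''$ from \eqref{radial eq}. The two nonlinear source terms produced by the substitution cancel exactly with the contributions coming from the derivative of $\tfrac{1}{p+1}|u|^{p+1} + \tfrac{1}{q+1}|v|^{q+1}$, leaving only the claimed term $-2(n-1)(\psi'/\psi)\,u'v'$.

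For the Pohozaev identity, set $V(r) := \int_0^r \psi^{n-1}\,ds$ so that $V' = \psi^{n-1}$, and note that $P_{(u,v)} = V F_{(u,v)} + \psi^{n-1}\bigl[\tfrac{uv'}{p+1} + \tfrac{u'v}{q+1}\bigr]$. Differentiating by the product rule, the term $V F_{(u,v)}'$ contributes $-2(n-1)\,V(\psi'/\psi)\,u'v'$ via the first identity. For the remaining terms, I would use the ODEs in the form $\psi^{n-1}u'' = -(n-1)\psi^{n-2}\psi'\,u' - \psi^{n-1}|v|^{q-1}v$ (and its analogue for $v''$). After this substitution the mixed contributions proportional to $\psi^{n-2}\psi'(uv' + u'v)$ cancel exactly, while the surviving nonlinear contributions $-\tfrac{\psi^{n-1}|u|^{p+1}}{p+1}-\tfrac{\psi^{n-1}|v|^{q+1}}{q+1}$ are absorbed by $V' F_{(u,v)} = \psi^{n-1}F_{(u,v)}$. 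What remains is
\[
P_{(u,v)}' = \left[\left(1 + \frac{1}{p+1} + \frac{1}{q+1}\right)\psi^{n-1} - 2(n-1)\,\frac{V\psi'}{\psi}\right] u'v' \, .
\]

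The main step is then to recognize this coefficient as $K(r)$. Using $1 - \tfrac{n-2}{n} = \tfrac{2}{n}$, the task reduces to the identity
\[
\frac{\psi^n(r)}{\psi'(r)} = n\,V(r) - \int_0^r \frac{\psi^n\,\psi''}{(\psi')^2}\,ds \, ,
\]
which is a single integration by parts: it follows from $\tfrac{d}{ds}[\psi^n/\psi'] = n\psi^{n-1} - \psi^n\psi''/(\psi')^2$ upon integrating from $0$ to $r$ and using $\psi(0)=0$, $\psi'(0)=1$ to discard the boundary term at $s=0$. I do not anticipate a substantive obstacle beyond spotting this integration by parts; all remaining manipulations are routine combinatorics of the radial equations, and the smoothness of $\psi$ on $(0,+\infty)$ is enough to justify every step on the maximal existence interval $(0,T)$.
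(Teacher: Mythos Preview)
Your proposal is correct and follows essentially the same route as the paper: direct differentiation of $F_{(u,v)}$ with substitution from \eqref{radial eq}, then product-rule differentiation of $P_{(u,v)}$ leading to the same intermediate coefficient $\bigl(1+\tfrac{1}{p+1}+\tfrac{1}{q+1}\bigr)\psi^{n-1}-2(n-1)V\psi'/\psi$, and finally the identical integration-by-parts identity $\psi^n/\psi' = nV - \int_0^r \psi^n\psi''/(\psi')^2\,ds$ (the paper writes it as $V = \tfrac{1}{n}\psi^n/\psi' + \tfrac{1}{n}\int_0^r \psi^n\psi''/(\psi')^2\,ds$, which is the same thing rearranged).
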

\begin{proof}
Formula \eqref{deriv-energy} follows by direct calculations, using \eqref{radial eq}. Taking advantage of the latter and \eqref{eq int form}, we can then compute the derivative of $P_{(u,v)}$: 
\[
\begin{split}
P_{(u,v)}'(r) = & \, \psi^{n-1}(r) \, F_{(u,v)}(r) - 2(n-1)\left(\int_0^r \psi^{n-1} \, ds \right) \frac{\psi'(r)}{\psi(r)} \, u'(r) v'(r)  \\
& - \frac{\psi^{n-1}(r) \left|u(r) \right|^{p+1}}{p+1} -\frac{\psi^{n-1}(r) \left|v(r)\right|^{q+1}}{q+1} + \left( \frac{1}{p+1} + \frac{1}{q+1}\right) \psi^{n-1}(r)  u'(r) v'(r) \\
= & \left[ \left(1+  \frac{1}{p+1} + \frac{1}{q+1}\right) \psi^{n-1}(r) -2(n-1) \left( \int_0^r \psi^{n-1}\,ds \right) \frac{\psi'(r)}{\psi(r)} \right] u'(r) v'(r) \, .
\end{split}
\]
Integrating by parts, recalling that $ \psi(0)=0 $ and $ \psi'(0)=1 $, we have that
\[
\int_0^r \psi^{n-1} \, ds = \frac1{n} \int_0^r \frac{n \, \psi^{n-1} \, \psi'}{\psi'} \, ds = \frac{\psi^n(r)}{n \, \psi'(r)} + \frac 1 n \int_0^r \frac{\psi^n \, \psi''}{\left(\psi'\right)^2} \, ds \, ,
\]
and hence
\[
\begin{split}
P_{(u,v)}'(r)  & =  \left[\left(1+  \frac{1}{p+1} + \frac{1}{q+1}-2 \, \frac{n-1}{n}\right) \psi^{n-1}(r) - 2 \, \frac{n-1}{n} 
\left( \int_0^r \frac{\psi^n \, \psi''}{\left(\psi'\right)^2}\,ds\right) \frac{\psi'(r)}{\psi(r)}\right] u'(r) v'(r) \\
 & =  K(r) u'(r) v'(r) \, ,
\end{split}
\]
that is \eqref{deriv-poho}.
\end{proof}

The above result yields a key monotonicity property for $ P_{(u,v)} $, in the critical or supercritical case. 

\begin{proposition}\label{prop: poho mon}
Let $ p,q>0 $ fulfill \eqref{crit-sup}, and $(\xi,\eta) \in (0,+\infty)^2$. Let $(u,v)$ solve \eqref{Cauchy pb}. Then $K(r) \le 0$ for every $r > 0$, with $ K(r)=0 $ if and only if equality holds in \eqref{crit-sup} and  $\psi''(s)=0$ for every $s \in (0,r)$. In particular, we have that $P_{(u,v)}(r) \le 0$ for every $ r \in \left( 0 , R_{\xi,\eta} \right) $, with $ P_{(u,v)}(r) = 0 $ if and only if equality holds in \eqref{crit-sup} and $\psi''(s)=0$ for every $s \in (0,r)$.
\end{proposition}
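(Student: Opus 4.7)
The plan is to analyze the sign of $K(r)$ term by term, then integrate the identity \eqref{deriv-poho} from Proposition \ref{prop: der P} using the strict monotonicity provided by Lemma \ref{lem: mon}.

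First I would split $K(r)$ into its two summands. The coefficient in front of $\psi^{n-1}(r)$ in the first term is $\frac{1}{p+1}+\frac{1}{q+1}-\frac{n-2}{n}$, which is $\le 0$ by the critical-supercritical assumption \eqref{crit-sup}; since $\psi^{n-1}(r)>0$ for $r>0$, the first summand is nonpositive. For the second summand, the Cartan-Hadamard hypothesis amounts to convexity of $\psi$, hence $\psi''\ge 0$; combined with $\psi,\psi'>0$ on $(0,+\infty)$, the factor $\int_0^r \frac{\psi^n\psi''}{(\psi')^2}\,ds$ is nonnegative and multiplied by $-\frac{2(n-1)}{n}\frac{\psi'(r)}{\psi(r)}<0$, so the second summand is also $\le 0$. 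Adding, $K(r)\le 0$ for every $r>0$.

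Next I would characterize equality. Since the two summands have the same sign, $K(r)=0$ forces both to vanish. The first term, as a function of $s$, is either identically zero (precisely when equality holds in \eqref{crit-sup}) or strictly negative on $(0,+\infty)$; hence $K(r)=0$ requires equality in \eqref{crit-sup}. Once this is granted, $K(r)=0$ reduces to $\int_0^r \frac{\psi^n\psi''}{(\psi')^2}\,ds=0$, and because the integrand is continuous and nonnegative this is equivalent to $\psi''(s)=0$ on $(0,r)$. Conversely, both conditions together clearly give $K(r)=0$.

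For the statement on $P_{(u,v)}$, I would note first that $P_{(u,v)}(0)=0$: indeed $\int_0^0 \psi^{n-1}\,ds=0$ and $\psi^{n-1}(0)=0$ since $\psi(0)=0$ and $n\ge 3$. By Lemma \ref{lem: mon}, on $(0,R_{\xi,\eta})$ we have $u'<0$ and $v'<0$, hence $u'(s)v'(s)>0$ strictly. Combining with $K(s)\le 0$ and integrating \eqref{deriv-poho} gives
\[
P_{(u,v)}(r)=\int_0^r K(s)\,u'(s)v'(s)\,ds\le 0\qquad \forall r\in(0,R_{\xi,\eta}).
\]
For the equality case, since the integrand is $\le 0$ and continuous, $P_{(u,v)}(r)=0$ forces $K(s)u'(s)v'(s)=0$ pointwise on $(0,r)$; the strict positivity of $u'v'$ then yields $K(s)=0$ on $(0,r)$. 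By the characterization of $\{K=0\}$ already proved, this is equivalent to equality in \eqref{crit-sup} together with $\psi''(s)=0$ on $(0,r)$; the converse is immediate.

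No step should present real difficulty: the only subtle point is to make sure the equality analysis for $K$ separates the (global, $s$-independent) contribution of the exponents from the (local, $s$-dependent) contribution of the curvature, and that the strict sign $u'v'>0$ from Lemma \ref{lem: mon} is available to pass from $P_{(u,v)}=0$ back to $K\equiv 0$ on the interval.
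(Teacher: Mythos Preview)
Your proposal is correct and follows exactly the same approach as the paper, which simply records that the sign of $K$ is a direct consequence of its definition together with $\psi''\ge 0$, and that the claim on $P_{(u,v)}$ follows from \eqref{deriv-poho}, $P_{(u,v)}(0)=0$, and Lemma \ref{lem: mon}. You have merely spelled out these steps (including the equality analysis) in full detail.
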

\begin{proof}
The first part of the thesis is a direct consequence of the definition of $K$, since $\psi'' \ge 0$ everywhere on any Cartan-Hadamard model manifold. The second part follows from formula \eqref{deriv-poho} and the fact that $P_{(u,v)}(0) = 0$, recalling also Lemma \ref{lem: mon}. 
\end{proof}

We are now in position to prove our nonexistence result for \eqref{Cauchy pb} on balls. 

\begin{proof}[Proof of Proposition \ref{thm: non-ex bdd}]
 Assume, by contradiction, that the Cauchy problem \eqref{Cauchy pb} admits a solution $ (u,v) $ for suitable initial data $(\xi,\eta) \in (0,+\infty)^2 $, which is positive on $ (0,R) $ and satisfies $u(R) = v(R) = 0$ for some $R>0$. Then, by virtue of Lemma \ref{lem: mon}, we have that $u'(R)<0$ and $v'(R)<0$. However, since \eqref{crit-sup} holds, the definition of $ P_{(u,v)} $ and Proposition \ref{prop: poho mon} entail 
	\[
	0  \ge P_{(u, v)}(R) = \left(\int_0^{R} \psi^{n-1}\,ds\right) u'(R) v'( R) > 0 \, ,
	\]
	which is absurd. 
\end{proof}

As a consequence, we infer that in the critical or supercritical case either $ R_{\xi,\eta}=+\infty $ or $ R_{\xi,\eta}<+\infty $ and the two components of the solution \emph{do not vanish simultaneously} at $ r=R_{\xi,\eta} $. 

\medskip 

Finally, we show a fundamental ordering property for positive solutions.

\begin{lemma}\label{ordering} 
Let $ p,q>0 $. Let $ \xi_1 \ge \xi_2 >0 $ and $ \eta_2 > \eta_1>0 $. Then, if $ (u_1,v_1) $ and $ (u_2,v_2) $ are two positive solutions to \eqref{Cauchy pb} starting from $ (\xi_1,\eta_1) $ and $ (\xi_2,\eta_2) $, respectively, in the common interval $ (0,b) $ for some $ b \in (0,+\infty] $, the functions
\beqn\label{ordering-1-bis}
r \mapsto  u_1(r)-u_2(r)  \qquad \text{and} \qquad r \mapsto v_2(r)-v_1(r) 
\eeqn
are strictly increasing in $ (0,b) $. 
\end{lemma}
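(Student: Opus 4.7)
The plan is to work with the differences $U := u_1 - u_2$ and $V := v_2 - v_1$, and translate the desired monotonicity into an integral identity. Using the integrated form \eqref{eq int form} of the radial equations (noting that $|v|^{q-1}v = v^{q}$ and $|u|^{p-1}u = u^{p}$ on the positivity interval), I would subtract the equations for the two solutions to obtain
\[
\psi^{n-1}(r) \, U'(r) = \int_0^r \bigl( v_2^{q} - v_1^{q} \bigr) \psi^{n-1} \, ds, \qquad \psi^{n-1}(r) \, V'(r) = \int_0^r \bigl( u_1^{p} - u_2^{p} \bigr) \psi^{n-1} \, ds.
\]
Thus $U'>0$ and $V'>0$ on $(0,b)$ are equivalent, respectively, to the orderings $v_2 > v_1$ and $u_1 > u_2$ on $(0,b)$. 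These two claims feed into one another, so the strategy is to break the loop by a continuation argument seeded by the initial conditions.

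For the seed step, I observe that $U(0) = \xi_1 - \xi_2 \ge 0$ and $V(0) = \eta_2 - \eta_1 > 0$. By continuity, $v_2 > v_1$ on a right neighborhood of $0$, and the first integral identity then forces $U'(r) > 0$ for small $r>0$. If $\xi_1 > \xi_2$, continuity directly yields $u_1 > u_2$ near $0$; if $\xi_1 = \xi_2$ one still concludes $u_1(r) > u_2(r)$ for small $r>0$, because the strict positivity of $U'$ just established lifts $U$ off zero. In either case, the integrand in the second identity becomes strictly positive near $0$, so $V'(r)>0$ in a right neighborhood of the origin as well.

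Then I would set
\[
r^\ast := \sup\bigl\{\, r \in (0,b) : U'(s) > 0 \text{ and } V'(s) > 0 \text{ for every } s \in (0,r) \,\bigr\},
\]
which is strictly positive by the previous step, and argue $r^\ast = b$ by contradiction. If $r^\ast < b$, on $(0,r^\ast)$ both $U$ and $V$ are strictly increasing, hence $U(r) > U(0) \ge 0$ and $V(r) > V(0) > 0$ for all $r \in (0,r^\ast]$. Plugging $v_2 > v_1$ and $u_1 > u_2$ on $(0,r^\ast]$ back into the integral representations, I get $\psi^{n-1}(r^\ast) U'(r^\ast) > 0$ and $\psi^{n-1}(r^\ast) V'(r^\ast) > 0$, and continuity extends the strict positivity of $U',V'$ slightly past $r^\ast$, contradicting maximality.

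The only delicate point is the initial step in the limit case $\xi_1 = \xi_2$, where both $U$ and $U'$ vanish at the origin: here the hypothesis $\eta_2 > \eta_1$ is exactly what is needed to force $U$ to lift off immediately, which also explains why the two strict monotonicity statements are intrinsically coupled rather than independent.
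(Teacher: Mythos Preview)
Your proof is correct and follows essentially the same route as the paper's: both arguments rest on the integral identities derived from \eqref{eq int form} and a continuation argument seeded by $\eta_2>\eta_1$. The only cosmetic difference is that the paper first establishes the pointwise ordering $v_2>v_1$ on $(0,b)$ via a first-crossing contradiction (then deduces $u_1>u_2$ and the strict monotonicity), whereas you track the maximal interval where $U',V'>0$ directly; also, your phrase ``equivalent'' for the relation between $U'>0$ and $v_2>v_1$ is a slight overstatement (only the implication from the ordering to positivity of the integral is used, and is what actually holds), but this does not affect the argument.
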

\begin{proof}
First of all, let us show that
\beq\label{ordering-1}
u_2(r) < u_1(r) \quad \text{and} \quad  v_2(r) > v_1(r) \qquad \forall r \in (0,b) \, .
\eeq
Note that it suffices to establish the right inequality only, as the validity of the latter implies (recall \eqref{eq int form}) 
\beq\label{ordering-2}
\begin{gathered}
u_2(r) = \xi_2 - \int_0^r \frac{1}{\psi^{n-1}(s)} \left( \int_0^s v_2^q \, \psi^{n-1} \, dt \right) ds < \xi_1 - \int_0^r \frac{1}{\psi^{n-1}(s)} \left( \int_0^s v_1^q \, \psi^{n-1} \, dt \right) ds = u_1(r) \\  \forall r \in (0,b) \, , 
\end{gathered}
\eeq
namely the left inequality. To this aim, we observe that by continuity $ v_2>v_1 $ at least in a small neighborhood of $ 0 $. Hence, if $ v_2>v_1 $ failed to hold in the whole $ (0,b) $ there would exist some $ a \in (0,b) $ such that $ v_2>v_1 $ in $ (0,a) $ and $ v_2(a) = v_1(a) $. However, by arguing exactly as in \eqref{ordering-2}, this would yield $ u_2 < u_1 $ in $ (0,a) $, which in turn entails
$$
v_2(a) = \eta_2 - \int_0^a \frac{1}{\psi^{n-1}(s)} \left( \int_0^s u_2^p \, \psi^{n-1} \, dt \right) ds >  \eta_1 - \int_0^a \frac{1}{\psi^{n-1}(s)} \left( \int_0^s u_1^p \, \psi^{n-1} \, dt \right) ds = v_1(a) \, , 
$$
a contradiction. Therefore, \eqref{ordering-1} holds, and since for all $ r \in (0,b) $ we have 
$$
\left( u_1-u_2 \right)' \!(r) = \frac{1}{\psi^{n-1}(r)} \int_0^r \left(  v_2^q - v_1^q \right)  \psi^{n-1} \, ds  \quad \, \text{and} \quad \, \left( v_2-v_1 \right)' \!(r) = \frac{1}{\psi^{n-1}(r)} \int_0^r \left(  u_1^p - u_2^p \right)  \psi^{n-1} \, ds \, ,
$$
such derivatives are strictly positive.  
\end{proof}

\section{Existence of (at least) one globally positive solution}\label{esistenza-base}

Our goal here is to establish an existence result for globally positive solutions, that covers both the stochastically complete and incomplete cases. To this aim, we carefully adapt the strategy developed by Serrin and Zou in \cite{SerZou}, and split the argument into some intermediate steps. 

\medskip 

Using the same notation as in Section \ref{sec: prel}, let us introduce the sets
\beqn\label{def A e B}
\begin{split}
	A &:= \left\{ (\xi,\eta) \in (0,+\infty)^2: \ \, R_{\xi, \eta}<+\infty \ \, \text{and} \ \, v\!\left(R_{\xi,\eta}\right)>u\!\left(R_{\xi,\eta}\right)=0\right\}, \\
	B &:= \left\{ (\xi,\eta) \in (0,+\infty)^2: \ \, R_{\xi, \eta}<+\infty \ \, \text{and} \ \, u\!\left(R_{\xi,\eta}\right)>v\!\left(R_{\xi,\eta}\right)=0\right\}.
\end{split}
\eeqn
Note that, in view of Proposition \ref{thm: non-ex bdd}, if \eqref{crit-sup} holds then $ A \cup B $ accounts for the whole set of initial data that \emph{do not} give rise to a globally positive solution of \eqref{Cauchy pb}. Since we will have to handle separately, in some parts of the proof, stochastically complete and incomplete manifolds, in the latter case we also define the quantity 
\beq\label{def small theta}
\theta := \int_0^{+\infty} \Theta \,dr \in (0,+\infty) \, ,
\eeq
where $ \Theta $ is the same function as in \eqref{def-theta}. 

\medskip 

In the next three lemmas we describe the main topological properties of $A$ and $B$.

\begin{lemma}\label{lem: A and B nonempty}
Let $ p,q>0 $. Then both $A \neq \emptyset$ and $B \neq \emptyset$. More precisely:\\
($i$) If $\Mb^n$ is stochastically complete, namely $\Theta \not \in L^1(\R^+)$, then 
\[
s,t >0 \, , \quad t > s^\frac{p+1}{q+1} \qquad \implies \qquad (\xi, \eta) \equiv (s, 2t) \in A \, ,
\]
and 
\[
s,t >0 \, , \quad t < s^\frac{p+1}{q+1} \qquad \implies \qquad (\xi, \eta) \equiv  (2s, t) \in B \, .
\] 
($ii$) If $\Mb^n$ is stochastically incomplete, namely $\Theta \in L^1(\R^+)$, then 
\[
s, t >0 \, , \quad t > \left( \theta s^p \right) \vee \left(\frac{s}{\theta}\right)^{\frac 1 q}  \qquad \implies \qquad (\xi, \eta) \equiv (s, 2t) \in A \, ,
\]
and
\[
s,t >0 \, , \quad s > \left( \theta t^q \right) \vee  \left(\frac{t}{\theta}\right)^{\frac 1 p} \qquad \implies \qquad (\xi, \eta) \equiv (2s, t) \in B \, .
\]
\end{lemma}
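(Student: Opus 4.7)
\medskip
\noindent\textbf{Proof plan.} The strategy is to combine the integral representation \eqref{eq int form} with the monotonicity of Lemma \ref{lem: mon} to obtain explicit upper and lower bounds on $u$ and $v$ in terms of $\Theta$. On the positivity interval $[0, R_{\xi,\eta})$, Lemma \ref{lem: mon} gives $u \le \xi$ and $v \le \eta$; substituting these crude bounds into \eqref{eq int form} and recalling the definition \eqref{def-theta} of $\Theta$ yields the quantitative estimates that drive the argument. All the conditions stated in (i) and (ii) are precisely calibrated so that such estimates can be combined to force one component to vanish strictly before the other.

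For case (i), fix $s,t>0$ with $t > s^{(p+1)/(q+1)}$ and take $(\xi,\eta) = (s, 2t)$. Whenever $v \ge t$ on an initial subinterval, \eqref{eq int form} gives
\[
u(r) \le s - t^q \int_0^r \Theta \, ds \, ,
\]
while $u \le s$ on $[0, R_{\xi,\eta})$ gives $v(r) \ge 2t - s^p \int_0^r \Theta \, ds$ throughout. Assume by contradiction that $R_{\xi,\eta} = +\infty$: Corollary \ref{prop: sc to 0} then forces $v \to 0$, so $v$ reaches the level $t$ at some finite $T$, at which the lower bound on $v$ implies $\int_0^T \Theta \, ds \ge t/s^p$; plugging this into the upper bound on $u$ produces $u(T) \le s - t^{q+1}/s^p < 0$, a contradiction. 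Hence $R_{\xi,\eta}$ is finite, and the very same chain of inequalities (applied at any $r \le R_{\xi,\eta}$ where $v$ would first touch $t$, and hence certainly at $R_{\xi,\eta}$ itself if $v$ were to vanish first) rules out $v\!\left(R_{\xi,\eta}\right) = 0$; Proposition \ref{thm: non-ex bdd} then leaves only the possibility $u\!\left(R_{\xi,\eta}\right) = 0 < v\!\left(R_{\xi,\eta}\right)$, so $(s,2t) \in A$. The statement $(2s, t) \in B$ is obtained by interchanging the roles of $(u,p)$ and $(v,q)$.

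For case (ii), the divergence of $\int_0^{\cdot} \Theta \, ds$ is replaced by the finiteness of $\theta$ defined in \eqref{def small theta}. With $(\xi,\eta) = (s, 2t)$ and $t > \theta s^p$, the lower bound
\[
v(r) \ge 2t - s^p \int_0^r \Theta \, ds \ge 2t - \theta s^p > t
\]
holds on the whole positivity interval, so $v$ cannot vanish and stays strictly above $t$. The second assumption $t > (s/\theta)^{1/q}$ then yields $u(r) \le s - t^q \int_0^r \Theta \, ds$, whose right-hand side tends to $s - \theta t^q < 0$ as $r \to +\infty$; consequently $R_{\xi,\eta}$ must be finite with $u\!\left(R_{\xi,\eta}\right) = 0$ and $v\!\left(R_{\xi,\eta}\right) > t > 0$, proving $(s,2t) \in A$. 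The dual assertion $(2s, t) \in B$ is again obtained by swapping components.

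The main (modest) obstacle is the bootstrap inherent to case (i): the bound on $u$ requires $v \ge t$ on the relevant interval, while the bound on $v$ requires $u \le s$, so the two estimates must be chained in the right order. This is handled cleanly by arguing by contradiction and evaluating everything at the first radius at which $v$ reaches the threshold $t$, which is finite thanks either to stochastic completeness (Corollary \ref{prop: sc to 0}) or, should one wish to push the same scheme to (ii), to the explicit integrability of $\Theta$.
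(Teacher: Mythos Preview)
Your approach is essentially the one taken in the paper: both arguments combine the bounds
\[
u(r) \le s - t^q \int_0^r \Theta\,d\sigma \quad (\text{valid while } v \ge t), \qquad v(r) \ge 2t - s^p \int_0^r \Theta\,d\sigma \, ,
\]
and evaluate them at the first radius where $v$ would drop to the level $t$. The paper packages this via the exit time $R_I$ from the rectangle $(0,s)\times(t,2t)$, while you argue directly at the threshold $v=t$; the content is identical.

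There is, however, one genuine defect. Your concluding step in case (i) invokes Proposition~\ref{thm: non-ex bdd}, but that proposition requires the critical/supercritical hypothesis \eqref{crit-sup}, whereas the present lemma is stated for \emph{all} $p,q>0$. The invocation is in fact unnecessary: your ``chain of inequalities'' already shows that if $v$ ever reached the level $t$ at some $r^\ast<R_{\xi,\eta}$, then $u(r^\ast)<0$, contradicting positivity. Hence $v>t$ on $[0,R_{\xi,\eta}]$, so $v(R_{\xi,\eta})\neq 0$, and the \emph{definition} of $R_{\xi,\eta}$ alone forces $u(R_{\xi,\eta})=0$ with $v(R_{\xi,\eta})>t>0$. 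Simply drop the reference to Proposition~\ref{thm: non-ex bdd} and the proof goes through for all $p,q>0$, in agreement with the paper.
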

\begin{proof}
We prove only the statements regarding the set $A$, as those regarding the set $B$ are completely analogous. Let $s,t>0$. We consider the (local) solution to \eqref{Cauchy pb} with $(\xi,\eta) \equiv (s,2t)$, and define 
\[
I:= (0,s) \times (t, 2t) \, , \qquad  R_I := \sup  \left\{r \in \left(0,R_{\xi,\eta}\right) : \ \, (u(r),v(r)) \in I \right\} .
\]
By integrating \eqref{eq int form} and exploiting the monotonicity of the components, we obtain 
\beq\label{eq lemma 3}
u(r) -s \le - t^q \int_0^r \Theta \, d\sigma \qquad \text{and} \qquad  v(r)-2t \ge -s^p \int_0^r \Theta \, d\sigma 
\eeq
for all $r \in \left(0,R_I \right) $. If $R_I<+\infty$, then by continuity and again monotonicity either $u(R_I) = 0$ and $ v(R_I) \ge t  $ or $ u(R_I)>0 $ and $v(R_I) = t$. In the former case it is plain that $(s,2t) \in A$, so the proof is complete. Therefore, in what follows we aim at ruling out, under the stated assumptions on $ (s,t) $, both the possibilities $R_I =+\infty$ and $u(R_I)>0$ with $v(R_I)=t$. 

\medskip 

\noindent \emph{($i$)} $ \Theta \not \in L^1(\R^+) $. \\
If $R_I = +\infty$, then $(u,v)$ is a globally positive solution of \eqref{Cauchy pb}, so Lemma \ref{lem: mon} implies that $u(r) \to \ell_u \in [0,s)$ and $v(r) \to \ell_v \in [t, 2t)$ as $r \to +\infty$. However, this is in contradiction with Corollary \ref{prop: sc to 0}, since $t>0$. Thus $R_I <+\infty$. Suppose now that $u(R_I)>0$ and $v(R_I)=t$. As $\Theta \not \in L^1(\R^+)$ and $ \Theta>0 $, there exists $r_0 \in (0,+\infty)$ such that
\[
\int_0^{r} \Theta \, d\sigma - \frac{t}{s^p} \ \begin{cases} < 0 & \text{if $r<r_0$} \, , \\ = 0 & \text{if $r=r_0$} \, , \\ >0 & \text{if $r>r_0$} \, . \end{cases}
\]
By \eqref{eq lemma 3}, we have that
\[
t = v(R_I) \ge 2t -s^p \int_0^{R_I} \Theta \, dr \, ,
\]
whence $R_I \ge r_0$, and in particular $u(r_0) \ge u(R_I) >0$. On the other hand, still \eqref{eq lemma 3} entails
\[
u(r_0) \le s -t^q \int_0^{r_0} \Theta \, dr = s \left(1- \frac{t^{q+1}}{s^{p+1}}\right) < 0 \, ,
\]
where the last inequality follows from the assumptions on $(s,t)$. Therefore, we obtain a contradiction again. 

\medskip  

\noindent \emph{($ii$)} $ \Theta \in L^1(\R^+) $. \\ 
Recall that in this case our assumptions on $ (s,t) $ read $t> \theta s^p$ and $\theta t^q>s$. If $R_I = +\infty$, then by taking the limit as $ r \to +\infty $ in \eqref{eq lemma 3} we obtain 
\[
0 \le \ell_u \le s- \theta t^q  <0 \, , 
\]
a contradiction. Thus $ R_I < +\infty $. Suppose now that $u(R_I)>0$ and $v(R_I)=t$. Still by \eqref{eq lemma 3} and the positivity of $\Theta$, it holds
\[
t = v(R_I) \ge 2t - s^p \int_0^{R_I} \Theta \, dr  >   2t - \theta s^p \, ,
\]
whence it follows that $ \theta s^p > t$, which is absurd. 
\end{proof}

\begin{lemma}\label{open-sets}
Let $ p,q>0 $. Then both the sets $A$ and $B$ are open.
\end{lemma}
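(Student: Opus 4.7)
The plan is to prove openness of $A$ by contradiction combined with continuous dependence; the case of $B$ is symmetric. Fix $(\xi_0,\eta_0)\in A$, let $(u_0,v_0)$ be the associated solution on $[0,R_0)$ with $R_0:=R_{\xi_0,\eta_0}<+\infty$, $u_0(R_0)=0$ and $v_0(R_0)>0$. Formula \eqref{eq int form} gives $u_0'(R_0)=-\psi^{1-n}(R_0)\int_0^{R_0} v_0^q\,\psi^{n-1}\,ds<0$, so after extending $(u_0,v_0)$ past $R_0$ via Lemma \ref{lem: cont} I would fix $\gamma>0$ small enough that $u_0(R_0+\gamma)<0$, that $v_0\ge v_0(R_0)/2$ throughout $[0,R_0+\gamma]$, and that $(2\xi_0)^p\int_{R_0-\gamma}^{R_0+\gamma}\Theta\,ds<v_0(R_0)/4$, a bound that will be used only at the very end.

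Assume by contradiction that $(\xi_k,\eta_k)\to(\xi_0,\eta_0)$ with $(\xi_k,\eta_k)\notin A$, and denote by $(u_k,v_k)$ the corresponding positive solutions and by $R_k\in(0,+\infty]$ their positivity radii. On each subinterval $[0,R_0-\varepsilon]$ with $\varepsilon\in(0,\gamma)$, both $u_0$ and $v_0$ stay bounded away from zero, so the right-hand side of \eqref{Cauchy pb} is locally Lipschitz there; classical continuous dependence then yields $(u_k,v_k)\to(u_0,v_0)$ uniformly on $[0,R_0-\varepsilon]$, which in particular gives $R_k>R_0-\varepsilon$ for $k$ large.

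The core step is to show also that $R_k\le R_0+\gamma$ for $k$ large. If not, then along a subsequence $u_k,v_k>0$ on $[0,R_0+\gamma]$, and uniform convergence forces $v_k\ge v_0(R_0)/4$ on $[0,R_0-\varepsilon]$. Inserting this into \eqref{eq int form} gives
\[
-\psi^{n-1}(r)\,u_k'(r)\ge\int_0^{R_0-\varepsilon} v_k^q\,\psi^{n-1}\,ds\ge C>0 \qquad \forall\, r\in[R_0-\varepsilon,\,R_0+\gamma],
\]
so $u_k'(r)\le -C'<0$ on this interval. Since $u_k(R_0-\varepsilon)\to u_0(R_0-\varepsilon)\to 0$ as $k\to\infty$ and then $\varepsilon\to 0$, integration produces $u_k(R_0+\gamma)\le u_k(R_0-\varepsilon)-C'(\gamma+\varepsilon)<0$ for $\varepsilon$ small and $k$ large, contradicting positivity of $u_k$ on $[0,R_k)$.

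Finally, \eqref{eq int form} together with $u_k\le 2\xi_0$ gives $|v_k'(r)|\le(2\xi_0)^p\,\Theta(r)$ on $[0,R_k]$, so for $\varepsilon$ small and $k$ large (using the initial calibration of $\gamma$ together with $v_k(R_0-\varepsilon)\to v_0(R_0-\varepsilon)\ge v_0(R_0)$),
\[
v_k(R_k)\ge v_k(R_0-\varepsilon)-(2\xi_0)^p\int_{R_0-\varepsilon}^{R_0+\gamma}\Theta\,ds\ge\frac{v_0(R_0)}{2}-\frac{v_0(R_0)}{4}>0,
\]
which forces $u_k(R_k)=0$ and hence $(\xi_k,\eta_k)\in A$, a contradiction. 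The main subtlety is that past $R_0$ the ODE system may fail to be Lipschitz when $p\wedge q<1$, so one cannot invoke continuous dependence directly beyond $R_0$; the strict negativity of $u_k'$ must instead be propagated quantitatively through the integral formulation \eqref{eq int form}, as above.
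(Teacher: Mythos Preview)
Your argument is correct and follows essentially the same strategy as the paper: obtain continuous dependence on any compact subinterval $[0,R_0-\varepsilon]$ where $(u_0,v_0)$ is strictly positive, then use the integral representation \eqref{eq int form} to control $u_k'$ from above and $|v_k'|$ quantitatively on $[R_0-\varepsilon,R_0+\gamma]$, forcing $u_k$ to vanish while $v_k$ stays positive. The paper carries out the same plan directly (constructing a neighborhood rather than arguing by contradiction) and phrases the derivative control as a two-sided estimate $|u'(r)-u_0'(R_0)|\le\tfrac12|u_0'(R_0)|$ on a short interval around $R_0$; your one-sided bounds $u_k'\le -C'$ and $|v_k'|\le(2\xi_0)^p\Theta$ serve the same purpose. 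One small remark: the preliminary extension of $(u_0,v_0)$ past $R_0$ via Lemma~\ref{lem: cont}, together with the conditions $u_0(R_0+\gamma)<0$ and $v_0\ge v_0(R_0)/2$ on $[0,R_0+\gamma]$, is never actually used in your argument and can be dropped---your proof only needs the calibration $(2\xi_0)^p\int_{R_0-\gamma}^{R_0+\gamma}\Theta\,ds<v_0(R_0)/4$.
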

\begin{proof}
When $p \wedge q \ge1$, the result is essentially a consequence of the continuous dependence of the solutions of \eqref{Cauchy pb} with respect to the initial data $ (\xi,\eta) $. In order to deal with general exponents, so as to cover the non-Lipschitz case $ p \wedge q <1 $ as well, one can argue similarly to \cite[Section 5]{SerZou}, where the same issue was addressed in $ \R^n $. Since the modifications required to adapt their proof to our Riemannian setting are minor, we only sketch the main points of the strategy.

First of all, we fix an arbitrary $ (\xi_0 , \eta_0) \in A $ and let $ (u_0,v_0) $ denote the corresponding solution to \eqref{Cauchy pb} starting from $ (\xi_0, \eta_0) $. From the definition of $ A $, it follows that $ R_0 \equiv R_{\xi_0,\eta_0} < +\infty  $, $ u_0(R_0)=0 $ and $ v_0(R_0)>0 $, with $(u_0,v_0) $ positive in $ [0,R_0) $. The goal is to show that there exists $ \delta_0>0 $ (small enough) such that if $ \left| \xi - \xi_0  \right| \vee \left| \eta - \eta_0 \right| < \delta_0 $ then $ (\xi,\eta) \in A $, \emph{i.e.}~the solution $ (u,v) $ to \eqref{Cauchy pb} starting from $ (\xi, \eta) $ satisfies $ R_{\xi,\eta} < +\infty $, $ u(R_{\xi,\eta}) = 0 $ and $ v(R_{\xi,\eta}) > 0 $. To this aim, we proceed as follows.

\begin{enumerate}[i)]

\item There exist $ c>0 $ and $ \delta' \in (0,1) $ such that, if $  \left| \xi - \xi_0  \right| \vee \left| \eta - \eta_0 \right| < \delta' $, then the maximal existence interval of $ (u,v) $ contains at least the common interval $ [0,R_0+c] $. Moreover, the uniform bounds 
\beq \label{est-1}
\left| u(r) \right| \le 2 \xi \le 2(\xi_0+1) \quad \text{and} \quad \left| v(r) \right| \le 2 \eta \le 2(\eta_0+1)  \qquad \forall r \in [0,R_0+c]
\eeq
hold. This can be achieved by means of Lemma \ref{lem: cont}, taking advantage of the fact that the constants $ C_1,C_2 $ in \eqref{def b} depend continuously on $ (\xi,\eta) $, the functions belonging to the space $ Y $ comply with \eqref{YYY} and, in addition, continuous dependence of the solutions holds in every interval $ [0,S] $ for $ S<R_0 $ arbitrarily close to $ R_0 $, since $ u_0 , v_0 >0 $ in $ [0,S] $. In particular, for $ \delta' \equiv \delta'_S > 0  $ small enough, we have $ R_{\xi,\eta} >S $. 

\smallskip
\item There exist $ \varepsilon \in (0,c \wedge R_0 ) $ and $ \delta'' \in (0,\delta') $  such that, if $  \left| \xi - \xi_0  \right| \vee \left| \eta - \eta_0 \right| < \delta'' $, then 
\beq \label{deriv-est}
\left| u'(r) - u'_0(R_0) \right| \le \frac{1}{2} \left| u'_0(R_0) \right| \quad \text{and} \quad \left| v'(r) - v'_0(R_0) \right| \le \frac{1}{2} \left| v'_0(R_0) \right| \qquad \forall r \in [R_0-\varepsilon,R_0+\varepsilon] \, .
\eeq
As concerns the left estimate in \eqref{deriv-est} (one argues analogously for the right one), upon using \eqref{eq int form} and the triangle inequality it is not difficult to deduce that
\beq \label{deriv-est-bis}
\begin{aligned}
\left| u'(r) - u'_0(R_0) \right| \le & \left| u'(R_0-\varepsilon) - u'_0(R_0-\varepsilon) \right| + \left| u_0'(R_0) \right| \left| \frac{\psi^{n-1}(R_0)}{\psi^{n-1}(r)} - 1 \right| \\
&  + \frac{1}{\psi^{n-1}(r)} \int_{R_0-\varepsilon}^r v^q \, \psi^{n-1} \, ds  + \frac{1}{\psi^{n-1}(r)} \int_{R_0-\varepsilon}^{R_0} v_0^q \, \psi^{n-1} \, ds \, ,
\end{aligned}
\eeq 
for all $ r \in [R_0-\varepsilon,R_0+\varepsilon] $. Thanks to \eqref{est-1}, the last three terms on the right-hand side of \eqref{deriv-est-bis} can be made arbitrarily small by choosing $ \varepsilon $ small enough depending only on $ R_0,u_0'(R_0),\xi_0,\eta_0,q,\psi,n $. On the other hand, the first term can also be made arbitrarily small upon requiring  $  \left| \xi - \xi_0  \right| \vee \left| \eta - \eta_0 \right| < \delta'' $, for a suitable $ \delta'' \equiv \delta''_\varepsilon \in (0,\delta') $, still  due to continuous dependence (recall that $ (u_0,v_0) $ is positive in $ [0,R_0-\varepsilon] $).

\smallskip 
\item We notice that, in the whole interval $ [R_0-\varepsilon,R_0+\varepsilon] $, by virtue of \eqref{deriv-est} we have
$$
 u'(r) \le \frac{1}{2} u_0'(R_0) <  0 \qquad \text{and} \qquad \frac{3}{2} v_0'(R_0) \le v'(r) \le \frac{1}{2} v_0'(R_0) <  0 \, .
$$
In particular, upon integration, for every $ \alpha \in (0,\varepsilon) $ we obtain 
\beq \label{q1}
v(R_0+\varepsilon) \ge v(R_0-\alpha)-\frac{3}{2} \left| v_0'(R_0) \right|(\alpha+\varepsilon) \qquad \text{and} \qquad u(R_0+\varepsilon) \le u(R_0-\alpha) - \frac{1}{2} \left| u_0'(R_0) \right|(\alpha+\varepsilon) \, .
\eeq
With no loss of generality, we may further require $ \varepsilon $ and $ \alpha $ to be so small that 
\beq \label{q2}
\varepsilon \le \frac{v(R_0)}{6\left| v_0'(R_0) \right|} \qquad \text{and} \qquad 0 < u_0(R_0-\alpha) \le \frac{1}{4} \left| u_0'(R_0) \right| \varepsilon \, ,
\eeq
since $ v_0(R_0)>0 $ and $  u_0(R_0)=0 $. In view of  \eqref{q1}, \eqref{q2} and continuous dependence on $ [0,R_0-\alpha] $, we can choose $ \delta_0 \in (0,\delta'') $ so small that, if $  \left| \xi - \xi_0  \right| \vee \left| \eta - \eta_0 \right| < \delta_0 $, then $ (u,v) $ is positive in $ [0,R_0-\varepsilon]  $ with
$$
v(R_0+\varepsilon)  > 0 \qquad \text{and} \qquad u(R_0+\varepsilon) < 0 \, . 
$$  
Because both $ u $ and $ v $ are decreasing in $ [R_0-\varepsilon,R_0+\varepsilon] $, this implies that  $ R_{\xi,\eta} \in (R_0-\varepsilon,R_0+\varepsilon) $ and $ u(R_{\xi,\varepsilon}) = 0 $.  Hence $ (\xi,\eta) \in A $  as desired (the proof for $ B $ is similar).  

\end{enumerate}

\end{proof}

\begin{lemma}\label{lem: functions A and B}
Let $ p,q>0 $. Then there exist two functions $ \phi,\gamma $ which are continuous and strictly increasing bijections of $ (0,+\infty) $ into itself, such that
\[
\begin{gathered}
\left\{(\xi, \eta) \in (0,+\infty)^2 : \ \,  \xi < \phi(\eta)\right\}  \subset A \, , \\
\left\{(\xi, \eta) \in (0,+\infty)^2 : \ \, \eta < \gamma(\xi)\right\} \subset B \, .
\end{gathered}
\]
\end{lemma}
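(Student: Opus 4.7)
The plan is to define $\phi$ and $\gamma$ by \emph{explicit} formulas extracted directly from Lemma \ref{lem: A and B nonempty}, without attempting to characterize the true upper boundary of $A$ or the true lower boundary of $B$ in the $(\xi,\eta)$-plane (the statement merely asserts the existence of such functions, so this is enough).

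In the stochastically complete case ($\Theta\notin L^1(\R^+)$), the implication ``$t>s^{(p+1)/(q+1)} \Rightarrow (s,2t)\in A$'' from Lemma \ref{lem: A and B nonempty} $(i)$ becomes, after the change of variables $(\xi,\eta)=(s,2t)$, the statement that $(\xi,\eta)\in A$ whenever $\xi<(\eta/2)^{(q+1)/(p+1)}$, so we set
\[
\phi(\eta):=\left(\frac{\eta}{2}\right)^{\!\frac{q+1}{p+1}} , \qquad \gamma(\xi):=\left(\frac{\xi}{2}\right)^{\!\frac{p+1}{q+1}} ,
\]
with $\gamma$ derived analogously from the companion condition for $B$. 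In the stochastically incomplete case ($\Theta\in L^1(\R^+)$), with $\theta$ as in \eqref{def small theta}, the corresponding condition from Lemma \ref{lem: A and B nonempty} $(ii)$ translates to: $(\xi,\eta)\in A$ whenever $\xi<(\eta/(2\theta))^{1/p}$ \emph{and} $\xi<\theta(\eta/2)^q$; we therefore define
\[
\phi(\eta):=\left(\frac{\eta}{2\theta}\right)^{\!\frac{1}{p}} \wedge \, \theta\left(\frac{\eta}{2}\right)^{\!q} , \qquad \gamma(\xi):=\left(\frac{\xi}{2\theta}\right)^{\!\frac{1}{q}} \wedge \, \theta\left(\frac{\xi}{2}\right)^{\!p} .
\]
With these definitions, the inclusions $\{\xi<\phi(\eta)\}\subset A$ and $\{\eta<\gamma(\xi)\}\subset B$ are immediate from Lemma \ref{lem: A and B nonempty}.

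It remains to verify that $\phi$ and $\gamma$ are continuous and strictly increasing bijections of $(0,+\infty)$ onto itself. In the complete case this is transparent, as each is a power function with strictly positive exponent. In the incomplete case, each of $\phi$ and $\gamma$ is the $\wedge$ of two continuous, strictly increasing bijections of $(0,+\infty)$ onto itself: continuity is preserved by $\wedge$; strict monotonicity is preserved because $\eta_1<\eta_2$ forces both component functions to strictly increase, so the same is true of their minimum; and since both components tend to $0$ as the argument tends to $0^+$ and to $+\infty$ as the argument tends to $+\infty$, so does the minimum, whence surjectivity onto $(0,+\infty)$ follows from the intermediate value theorem. Overall, the construction is elementary once Lemma \ref{lem: A and B nonempty} is available, so there is no substantive technical obstacle.
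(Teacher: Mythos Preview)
Your proof is correct and is essentially the same as the paper's: the paper packages the two cases by setting $f(s)=s^{(p+1)/(q+1)}$ (complete case) or $f(s)=(\theta s^p)\vee(s/\theta)^{1/q}$ (incomplete case) and then defines $\phi(\eta):=f^{-1}(\eta/2)$, which upon computing the inverse gives exactly your formulas. Your argument is slightly more explicit in that you write out $f^{-1}$ concretely and justify that the minimum of two continuous strictly increasing bijections of $(0,+\infty)$ onto itself is again one, a point the paper leaves implicit.
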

\begin{proof}
Let $ s> 0$, and define
\[
f(s) := \begin{cases} 
s^\frac{p+1}{q+1} & \mbox{if } \Theta \not \in L^1(\R^+) \, , \\ 
\left( \theta s^p \right) \vee \left(\frac{s}{\theta}\right)^{\frac 1 q} & \mbox{if } \Theta \in L^1(\R^+) \, . 
\end{cases}
\]
In both cases, it is clear that $f$ is a continuous and strictly increasing bijection of $(0,+\infty)$ into itself, with inverse function $ f^{-1} $. Moreover, by Lemma \ref{lem: A and B nonempty}, we have 
\[
s,t > 0 \, ,  \quad  t>f(s)  \qquad \implies \qquad (\xi, \eta) \equiv (s,2t) \in A \, .
\]
In particular, we deduce that  
\[
 \xi , \eta \in (0,+\infty)^2 \, , \quad \xi < f^{-1}\!\left(\frac{\eta}{2}\right)  \qquad \implies \qquad (\xi, \eta)  \in A  \, .
\]
This shows the part of the thesis regarding $A$, with
\[
\phi(\eta):=  f^{-1}\!\left(\frac{\eta}{2}\right)  \qquad \forall \eta>0 \, .
\]
The proof of the second part of the thesis, regarding the set $ B $, is completely analogous and therefore we omit it.
\end{proof}

We are finally in position to prove the most important result of this section.

\begin{proposition}\label{thm: main ex}
Let $ p,q>0 $ fulfill \eqref{crit-sup}. Then, for each $ \xi>0 $ there exists at least one value $ \eta \equiv \eta(\xi)>0 $ such that $ (\xi,\eta) $ gives rise to a globally positive solution to \eqref{Cauchy pb}. In particular, \eqref{LE sys} on $\Mb^n$ has at least a $1$-parameter family of solutions.
\end{proposition}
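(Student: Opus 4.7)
The plan is to fix $\xi>0$ and run a connectedness argument along the vertical section $\{\xi\}\times(0,+\infty)$ in the initial-data plane, exploiting the three ingredients already established: (a) $A$ and $B$ are \emph{open} (Lemma \ref{open-sets}); (b) $A$ and $B$ are \emph{disjoint} in the critical-supercritical regime, because Proposition \ref{thm: non-ex bdd} forbids simultaneous vanishing of $u$ and $v$ at $R_{\xi,\eta}$; (c) for each fixed $\xi$, both the ``small $\eta$'' and the ``large $\eta$'' regions lie in $B$ and $A$ respectively, thanks to Lemma \ref{lem: functions A and B}.

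More precisely, I would introduce the two slices
\[
S_\xi:=\{\eta>0:(\xi,\eta)\in A\},\qquad T_\xi:=\{\eta>0:(\xi,\eta)\in B\},
\]
which are open subsets of $(0,+\infty)$ by Lemma \ref{open-sets}, and disjoint by the observation following Proposition \ref{thm: non-ex bdd}. Lemma \ref{lem: functions A and B} immediately gives $(0,\gamma(\xi))\subset T_\xi$, so $T_\xi\neq\emptyset$; and since $\phi$ is a continuous strictly increasing bijection of $(0,+\infty)$ onto itself, the same lemma yields $(\phi^{-1}(\xi),+\infty)\subset S_\xi$, so $S_\xi\neq\emptyset$. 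If we had $S_\xi\cup T_\xi=(0,+\infty)$, the connected interval $(0,+\infty)$ would be partitioned into two nonempty disjoint open sets, a contradiction. Therefore there exists $\eta>0$ with $(\xi,\eta)\notin A\cup B$, which by the characterization recalled above means exactly $R_{\xi,\eta}=+\infty$, i.e.\ the solution produced by Lemma \ref{lem: lem} (and extended via Lemma \ref{lem: cont}) is globally positive.

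I expect no substantial further obstacle at this stage: all the heavy lifting has already been carried out in Section \ref{sec: prel} and in the three preparatory lemmas of this section. The truly delicate points were establishing that $A$ and $B$ are open when $p\wedge q<1$ (which required the continuation argument of Lemma \ref{lem: cont} plus the Serrin--Zou-style adaptation in Lemma \ref{open-sets}) and the Pohozaev-based nonexistence on balls (Proposition \ref{thm: non-ex bdd}) that forces disjointness of $A$ and $B$; once these are available, the existence of at least one admissible $\eta$ per $\xi$ is a pure one-dimensional connectedness argument. The conclusion that \eqref{LE sys} on $\Mb^n$ admits at least a one-parameter family of solutions is then immediate, since the construction above yields, for every $\xi\in(0,+\infty)$, a globally positive pair $(u,v)$ with $u(0)=\xi$, and distinct values of $\xi$ produce distinct solutions in view of Lemma \ref{ordering}.
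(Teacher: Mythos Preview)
Your argument is correct and essentially identical to the paper's: fix $\xi$, use Lemma \ref{lem: functions A and B} to place small $\eta$ in $B$ and large $\eta$ in $A$, invoke openness (Lemma \ref{open-sets}) and connectedness of $(0,+\infty)$ to find $\eta$ outside $A\cup B$, and conclude $R_{\xi,\eta}=+\infty$ via Proposition \ref{thm: non-ex bdd}. One small misattribution worth cleaning up: $A$ and $B$ are disjoint \emph{by definition} (in one set $u$ vanishes and $v$ does not, in the other the reverse), not by Proposition \ref{thm: non-ex bdd}; that proposition is used exactly where you invoke it at the end, to rule out the remaining possibility $u(R_{\xi,\eta})=v(R_{\xi,\eta})=0$ and hence identify $\{(\xi,\eta)\notin A\cup B\}$ with $\{R_{\xi,\eta}=+\infty\}$.
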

\begin{proof}
Let $ \xi>0 $. Thanks to Lemma \ref{lem: functions A and B}, we have that $ (\xi, \bar \eta) \in B  $ for every $ \bar \eta >0 $ such that $ \bar \eta  < \gamma(\xi) $. Moreover, still by Lemma \ref{lem: functions A and B}, it follows that $ (\xi, \hat \eta) \in A $ for every $ \hat \eta >0 $ such that $ \hat \eta  > \phi^{-1}(\xi) $. Thus, since $ A $ and $B$ are open (due to Lemma \ref{open-sets}) and disjoint sets, there necessarily exists $ \eta >0 $  (depending on $ \xi$) such that $ (\xi,\eta) \not \in A \cup B $. Let us consider the solution $ (u,v) $ to the Cauchy problem \eqref{Cauchy pb} with this initial datum. Then there are two possibilities: either $ R_{\xi,\eta}=+\infty $, which means that $ (u,v) $ is a globally positive solution as desired, or $ R_{\xi,\eta}<+\infty $, and in this case $ u\!\left(R_{\xi,\eta}\right)=v\!\left(R_{\xi,\eta}\right)=0 $. However, the latter cannot occur in view of Proposition \ref{thm: non-ex bdd}, so the proof is complete.
\end{proof}

\section{Structure of globally positive solutions: proof of Theorems \ref{monotone-coro} and \ref{teo-cs-interv}} \label{gps}

In this section we prove our main results concerning the structure, with respect to $ (\xi,\eta) $, of the existence region of globally positive solutions, along with their asymptotic behavior. In the stochastically complete case, the tools introduced above are enough, thus we will start from the proof of Theorem \ref{monotone-coro}. On the contrary, the situation is much more complicated in the stochastically incomplete case, as we will need several new preliminary results, hence we devote to it an entire subsection. 

\begin{proof}[Proof of Theorem \ref{monotone-coro}]	
	The fact that $ \ell_u=\ell_v=0 $, for any globally positive solution, has already been shown in Corollary \ref{prop: sc to 0}. Hence, let us focus on the rest of the statement. From Proposition \ref{thm: main ex}, we know that for each $ \xi>0 $ there exists at least one $ \eta >0 $ such that the solution to \eqref{Cauchy pb} is globally positive. Assume by contradiction that there exists another $ \bar \eta>0 $ such that also the solution to \eqref{Cauchy pb} with initial data $ (\xi,\bar \eta) $, which we denote by $ (\bar u , \bar v) $, is globally positive. With no loss of generality, we can suppose that $ \bar \eta > \eta $. Then, upon applying Lemma \ref{ordering} with $ \xi_1=\xi_2=\xi $, $  \eta_2=\bar \eta > \eta = \eta_1  $ and $ b=+\infty $, we would infer that 
	$$
 	\ell_{\bar v} - \ell_{v} = \lim_{r \to +\infty} \left[ \bar{v}(r) - v(r)  \right]  > \bar \eta - \eta >0 \, ,
	$$
	which is absurd still in view of Corollary \ref{prop: sc to 0}. Therefore, $ \eta \equiv \eta(\xi) $ is uniquely identified and it is well defined a function $ F : (0,+\infty) \to (0,+\infty) $ that to every $ \xi>0 $ associates such value. In order to show that it is nondecreasing, let $ \xi_1>\xi_2>0 $. If, by contradiction, $ F(\xi_1) < F(\xi_2)  $ then by reasoning as above we would obtain $ \ell_{v_2}-\ell_{v_1}>0 $, which is impossible. Now we observe that, by reversing the roles of $ p,q $ and $ \xi,\eta $ and repeating the above argument, one finds that it is well defined a function $ G : (0,+\infty) \to (0,+\infty) $ that to every $ \eta>0 $ associates the only value $ \xi \equiv \xi(\eta)>0 $ such that $ (\xi,\eta) $ gives rise to a globally positive solution to \eqref{Cauchy pb}. By construction, it is plain that $ G(F(\xi)) = \xi  $ and $ F(G(\eta))=\eta $ for all $ \xi,\eta>0 $, so $ F $ is a bijection of $ (0,+\infty) $ into itself with $ G=F^{-1} $. On the other hand, since $ F $ is nondecreasing, the only possibility is that it is actually strictly increasing and continuous. 
\end{proof}

\subsection{The stochastically incomplete case.}\label{SI}

First of all, note that if $\Mb^n$ is stochastically incomplete, then of course it cannot be isometric to $\R^n$. In terms of the function $\psi$, this means that $\psi''$ must be strictly positive somewhere. Therefore, in the light of Proposition \ref{prop: poho mon}, we deduce that in the critical or supercritical case $P_{(u,v)}(r) \le - C<0$ for every $r$ large enough. As a straightforward consequence,  we have that 
\[
\psi^{n-1}(r) \left[ u'(r) v(r)+u(r) v'(r)\right] \le -C \qquad \forall r \ge r_0 \, ,
\]
which in turn implies that 
\begin{equation}\label{eq-prod}
u(r)v(r) \ge C \int_r^{+\infty} \frac{1}{\psi^{n-1}} \, ds \qquad \forall r \ge r_0 \, ,
\end{equation}
for a suitable $ r_0>0 $ large enough. Note that the integral on the right-hand side is finite as a trivial consequence of the fact that $ \Theta \in L^1(\R^+) $. This basic estimate will be crucial in the proof of the next result, which is the cornerstone of this subsection. 

\begin{proposition}\label{not-both-vanish}
	Let $ \Theta \in L^1(\R^+) $ and $ p,q>0 $ fulfill \eqref{crit-sup}. Let $ (\xi,\eta) \in (0,+\infty)^2 $. Then, if $ (u,v) $ is a globally positive solution to \eqref{Cauchy pb}, it holds 
	\begin{equation*}\label{pp}
\lim_{r \to +\infty} u(r) \vee v(r) > 0  \, . 
	\end{equation*}
\end{proposition}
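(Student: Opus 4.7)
I would argue by contradiction. Suppose $\ell_u = \ell_v = 0$, and set $J(r) := \int_r^\infty \psi^{1-n}(s)\,ds$, which is finite because $\psi(r) \ge r$ on a Cartan-Hadamard manifold and $n \ge 3$, and which satisfies $J(r) \to 0$ as $r \to +\infty$. The estimate \eqref{eq-prod} then reads $u(r)v(r) \ge C J(r)$ for all $r \ge r_0$, with $C > 0$. The strategy is to combine \eqref{eq-prod} with sharp asymptotics on $u, v$ derived from the ODE and show incompatibility.

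The first step is to observe that, since $u, v \to 0$, applying L'Hopital's rule to the quotient $u(r)/J(r)$ (a $0/0$ form) and using the formulas $u'(r) = -\psi^{1-n}(r) I_v(r)$ and $J'(r) = -\psi^{1-n}(r)$, where $I_v(r) := \int_0^r v^q \psi^{n-1}\,ds$ is nondecreasing, yields
\[
\lim_{r \to +\infty} \frac{u(r)}{J(r)} = \lim_{r \to +\infty} I_v(r) =: I_v^\infty \in (0,+\infty],
\]
and symmetrically $v(r)/J(r) \to I_u^\infty \in (0,+\infty]$. I would then split into three cases according to the finiteness of $I_u^\infty, I_v^\infty$.

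In the \emph{first case}, both $I_u^\infty$ and $I_v^\infty$ are finite. Then $u(r) \le 2 I_v^\infty J(r)$ and $v(r) \le 2 I_u^\infty J(r)$ for $r$ large, so $u(r) v(r) \le 4 I_u^\infty I_v^\infty J(r)^2 = o(J(r))$, which contradicts \eqref{eq-prod} as $J(r) \to 0$. In the \emph{second case}, exactly one of them is infinite, say $I_u^\infty = +\infty$ and $I_v^\infty < +\infty$. Then $u(r) \le 2 I_v^\infty J(r)$ for $r$ large, and the lower bound $uv \ge CJ$ yields $v(r) \ge C/(2 I_v^\infty) > 0$, contradicting $\ell_v = 0$. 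The symmetric subcase is handled in the same way.

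The \emph{third and most delicate case} is $I_u^\infty = I_v^\infty = +\infty$. Here I would apply a second L'Hopital to the quotient $u(r)v(r)/J(r)$, using $(uv)'(r) = -\psi^{1-n}(r)[I_u(r) u(r) + I_v(r) v(r)]$, to obtain
\[
\lim_{r \to +\infty} \frac{u(r)v(r)}{J(r)} = \lim_{r \to +\infty}\bigl(I_u(r) u(r) + I_v(r) v(r)\bigr),
\]
whenever the latter exists (otherwise one works with $\liminf$ and $\limsup$, using that $uv/J$ is a $\psi^{1-n}$-weighted average of $I_u u + I_v v$ on $[r,+\infty)$). Combining with the Fubini-type identity $u(r) = I_v(r) J(r) + \int_r^\infty v^q(t) \psi^{n-1}(t) J(t)\,dt$ (and its symmetric version), the elementary bounds $I_u(r) J(r) \le v(r)$ and $I_v(r) J(r) \le u(r)$, and the sharp Pohozaev inequality $P_{(u,v)}(r) \le -C$ rewritten as $V(r) F_{(u,v)}(r) + C \le u(r) I_u(r)/(p+1) + v(r) I_v(r)/(q+1)$ (with $V(r) := \int_0^r \psi^{n-1}\,ds$), I would derive the asymptotic balance $u \sim I_v J$ and $v \sim I_u J$ and use it to show that $I_u(r) I_v(r) J(r)$ must stay bounded away from zero; feeding this back into $u \to 0$ and $v \to 0$ together with the integrability $\int_0^\infty \psi^{1-n} I_u\,ds = \eta < +\infty$ produces the required contradiction.

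The principal obstacle is case (c): cases (a) and (b) are essentially direct consequences of the L'Hopital asymptotics and \eqref{eq-prod}, whereas in case (c) the asymptotic behavior of both components is weaker ($u/J, v/J \to +\infty$) and one must carefully exploit both the Pohozaev inequality in its sharper form and the finiteness of $\int_0^\infty \psi^{1-n} I_u\,ds$ and $\int_0^\infty \psi^{1-n} I_v\,ds$ (equivalent to $\ell_u = \ell_v = 0$) to close the argument.
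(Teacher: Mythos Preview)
Your cases (a) and (b) are clean and correct: the L'H\^opital computation $u/J\to I_v^\infty$, $v/J\to I_u^\infty$ is valid, and the contradictions you draw from \eqref{eq-prod} in these two cases are immediate. However, case (c) --- both $I_u^\infty=I_v^\infty=+\infty$ --- is not proved; it is only described. The two key steps you announce are (i) the asymptotic balance $u\sim I_v J$, $v\sim I_u J$, and (ii) ``feeding back'' $I_uI_vJ\gtrsim 1$ together with the integrability of $\psi^{1-n}I_u$ to reach a contradiction. Neither step is carried out, and (i) in particular is not obvious: from your Fubini identity you only get the one-sided bound $u\ge I_vJ$, and the remainder $R_v(r)=\int_r^\infty v^q\psi^{n-1}J\,dt$ need not be $o(I_vJ)$ when $\int_r^\infty v^q\psi^{n-1}=+\infty$. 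Your Pohozaev rewriting $uI_u/(p+1)+vI_v/(q+1)\ge C$ combined with $I_uJ\le v$, $I_vJ\le u$ only yields $C'\le I_uu+I_vv\le 2uv/J$, which simply reproduces \eqref{eq-prod} and does not close the loop. As written, case (c) is a genuine gap.

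The paper handles the proposition by a different trichotomy, based on the ratio $v/u^p$ (rather than on the finiteness of $I_u^\infty,I_v^\infty$): either this ratio oscillates between $0$ and $+\infty$, or its $\liminf$ is positive, or its $\limsup$ is finite. The point of using $v/u^p$ is that the exponent $p$ matches the nonlinearity in the equation for $v$, so the integrated identities \eqref{eq-u}--\eqref{eq-v} can be compared directly; the condition $pq>1$ (rather than merely $p,q>0$) is then what produces the contradictions. In the oscillating case the paper picks a sequence of critical points of $v/u^p$, which forces $u'/u=\tfrac1p\,v'/v$ at those points and turns \eqref{eq-u} into a quantitative inequality. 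In the other two cases it first rules out the possibility that $(\log u)'$ or $(\log v)'$ times $\psi^{n-1}J$ stays $\le -(1-\epsilon)$ (via integration and \eqref{eq-prod}), and then evaluates \eqref{eq-u} or \eqref{eq-v} along a good sequence. None of these steps is captured by your L'H\^opital framework in case (c); if you want to salvage your decomposition, the missing ingredient is precisely a mechanism --- like the paper's choice of sequences where $(v/u^p)'=0$ or where $(\log u)'\psi^{n-1}J$ is close to a limit --- that converts the differential identities into pointwise estimates strong enough to contradict $pq>1$.
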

\begin{proof}
We argue by contradiction, assuming that both $ u(r) $ and $ v(r) $ vanish as $ r \to + \infty $. In such case, integrating first the differential equations in \eqref{Cauchy pb} from $r$ to $s$, and then integrating with respect to $s$ from $r$ to $+\infty$, we readily obtain the following identities:
\begin{equation}\label{eq-u}
0 = 1 + \frac{u'(r)}{u(r)} \, \psi^{n-1}(r) \int_r^{+\infty} \frac{1}{\psi^{n-1}} \, ds - \frac{1}{u(r)} \int_r^{+\infty} \frac{1}{\psi^{n-1}(s)} \left(\int_r^s v^q \, \psi^{n-1} \, dt\right)ds \, , 
\end{equation}
and 
\begin{equation}\label{eq-v}
0 = 1 + \frac{v'(r)}{v(r)} \, \psi^{n-1}(r) \int_r^{+\infty} \frac{1}{\psi^{n-1}} \, ds - \frac{1}{v(r)} \int_r^{+\infty} \frac{1}{\psi^{n-1}(s)} \left(\int_r^s u^p \, \psi^{n-1} \, dt\right)ds \, ,
\end{equation}
for all $r>0$. Note that \eqref{crit-sup} implies $ pq>1 $, hence we can and will assume with no loss of generality that $ p>1 $. Now we consider three possibilities: 
\begin{equation}\label{A}\tag{A}
\limsup_{r \to +\infty} \frac{v(r)}{u^p(r)} = + \infty \qquad \text{and} \qquad \liminf_{r \to +\infty} \frac{v(r)}{u^p(r)} = 0 \, , 
\end{equation}
\begin{equation}\label{B}\tag{B}
\liminf_{r \to +\infty} \frac{v(r)}{u^p(r)} > 0 \, ,
\end{equation}
\begin{equation}\label{C}\tag{C}
\limsup_{r \to +\infty} \frac{v(r)}{u^p(r)} < + \infty  \, .
\end{equation}
Since \eqref{A}, \eqref{B} and \eqref{C} cover all the scenarios, achieving a contradiction in each case will prove the thesis. 

Suppose that \eqref{A} holds; in particular, this entails the existence of a sequence $ r_m \to + \infty $ such that 
\begin{equation}\label{A-1}
\lim_{m \to \infty} \frac{v(r_m)}{u^p(r_m)} = 0 \qquad \text{and} \qquad \left( \frac{v}{u^p} \right)' \! (r_m) = 0 \quad \forall m \in \N \, .
\end{equation}
The rightmost identity is equivalent to
$$
v'(r_m) \, u^{-p}(r_m) - p \, v(r_m) \, u^{-p-1} (r_m)\, u'(r_m) = 0 \qquad \forall m \in \N  \, ,
$$
that is 
$$
\frac{1}{p} \, \frac{v'(r_m)}{v(r_m)} = \frac{u'(r_m)}{u(r_m)}  \qquad \forall m \in \N \, , 
$$
whence 
\begin{equation}\label{A-2}
\frac{u'(r_m)}{u(r_m)} \, \psi^{n-1}(r_m) \int_{r_m}^{+\infty} \frac{1}{\psi^{n-1}} \, ds = \frac{1}{p} \frac{v'(r_m)}{v(r_m)} \, \psi^{n-1}(r_m)\int_{r_m}^{+\infty} \frac{1}{\psi^{n-1}} \, ds  \ge -\frac{1}{p} \qquad \forall m \in \N \, , 
\end{equation}
where the inequality follows from \eqref{eq-v} evaluated at $ r=r_m $. On the other hand, the leftmost identity in \eqref{A-1} implies the existence of some constant $c>0$ such that 
\begin{equation}\label{A-1-bis}
v(r_m) \le c \, u^p(r_m) \qquad \forall m \in \N \, . 
\end{equation}
Hence, using \eqref{eq-u} with $ r=r_m $ we end up with 
\begin{equation}\label{A-3}
\begin{aligned}
0 = & \, 1 + \frac{u'(r_m)}{u(r_m)} \, \psi^{n-1}(r_m) \int_{r_m}^{+\infty} \frac{1}{\psi^{n-1}} \, ds - \frac{1}{u(r_m)} \int_{r_m}^{+\infty} \frac{1}{\psi^{n-1}(s)} \left( \int_{r_m}^s v^q \, \psi^{n-1} \, dt\right)ds \\
 \ge & \, \frac{p-1}{p} - \frac{v^q(r_m)}{u(r_m)} \int_{r_m}^{+\infty} \Theta \, ds \ge  \frac{p-1}{p} - c^q  u^{pq-1}  (r_m)\int_{r_m}^{+\infty} \Theta \,  ds
\end{aligned}
\end{equation}
for all $ m \in \N $, where we took advantage of \eqref{A-2}, \eqref{A-1-bis} and the fact that $v$ is decreasing. Since $ pq>1 $, $ p>1 $, and $\Theta \in L^1(\R^+)$, letting $ m \to \infty $ in \eqref{A-3} we reach the contradiction $ 0 \ge \frac{p-1}{p} $.

Suppose instead that \eqref{B} holds. This means that there exists a constant $ c>0 $ such that 
\begin{equation}\label{B-1}
u^p(r) \le c \, v(r) \qquad \forall r \ge 0 \, . 
\end{equation}
Let us rule out the possibility that
\begin{equation}\label{B-1-a}
\limsup_{r \to +\infty} \frac{v'(r)}{v(r)} \, \psi^{n-1} (r)\int_r^{+\infty} \frac{1}{\psi^{n-1}} \, ds \le -1 \, .
\end{equation}
Indeed, if \eqref{B-1-a} were satisfied, we could select $0 < \epsilon< \frac{1}{p+1}$ and $ r_\epsilon > r_0$ such that
$$
\frac{v'(r)}{v(r)} \, \psi^{n-1}(r) \int_r^{+\infty} \frac{1}{\psi^{n-1}} \, ds \le -(1-\epsilon) \qquad \forall r \ge r_\epsilon \, ,
$$
and a simple integration of this differential inequality on $(r_\epsilon,r)$ would yield
\begin{equation}\label{B-2}
v(r) \le C_\epsilon \left( \int_r^{+\infty} \frac{1}{\psi^{n-1}} \, ds\right)^{1-\epsilon} \qquad \forall r \ge r_\epsilon \, ,
\end{equation}
for some constant $C_\epsilon>0$. Hence, thanks to \eqref{eq-prod}, we would infer that 
\begin{equation}\label{B-3}
u(r) \ge \frac{C}{C_\epsilon} \left( \int_r^{+\infty} \frac{1}{\psi^{n-1}} \, ds\right)^{\epsilon} \qquad \forall r \ge r_\epsilon \, .
\end{equation}
But \eqref{B-2}, \eqref{B-3} and \eqref{B-1} are inconsistent, as $0<\epsilon<\frac{1}{p+1}$. Therefore, since \eqref{B-1-a} cannot hold, we deduce that there exist a sequence $ r_m \to +\infty $ and $ \alpha \in [0,1) $ such that 
\begin{equation}\label{B-1-b}
\lim_{m \to \infty} \frac{v'(r_m)}{v(r_m)} \, \psi^{n-1}(r_m) \int_{r_m}^{+\infty} \frac{1}{\psi^{n-1}} \, ds = -\alpha \, .
\end{equation}
Taking $ r=r_m $ in \eqref{eq-v}, using the fact that $ u $ is decreasing along with \eqref{B-1}, we obtain  
\begin{equation}\label{B-4}
\begin{aligned}
0 =  & \, 1 + \frac{v'(r_m)}{v(r_m)} \, \psi^{n-1}(r_m) \int_{r_m}^{+\infty} \frac{1}{\psi^{n-1}} \, ds - \frac{1}{v(r_m)} \int_{r_m}^{+\infty} \frac{1}{\psi^{n-1}(s)} \left(\int_{r_m}
^s u^p \, \psi^{n-1} \, dt\right)ds  \\
\ge & \, 1 + \frac{v'(r_m)}{v(r_m)} \, \psi^{n-1}(r_m) \int_{r_m}^{+\infty} \frac{1}{\psi^{n-1}} \, ds - \frac{u^p(r_m)}{v(r_m)} \int_{r_m}^{+\infty} \Theta \, ds \\
\ge & \,  1 + \frac{v'(r_m)}{v(r_m)} \, \psi^{n-1}(r_m) \int_{r_m}^{+\infty} \frac{1}{\psi^{n-1}} \, ds - c \, \int_{r_m}^{+\infty} \Theta \, ds
\end{aligned}
\end{equation}
for all $ m \in \N $. Hence, by passing to the limit in \eqref{B-4} as $ m \to \infty $, due to \eqref{B-1-b} we reach the contradiction $ 0 \ge 1-\alpha $.

Finally, suppose that \eqref{C} holds. This means that there exists a constant $c>0$ such that
\begin{equation}\label{C-1}
v(r) \le c \, u^p(r) \qquad \forall r \ge 0 \, . 
\end{equation}
Similarly to \eqref{B}, we can rule out the possibility that 
\begin{equation*}\label{C-2}
\limsup_{r \to +\infty} \frac{u'(r)}{u(r)} \, \psi^{n-1}(r) \int_r^{+\infty} \frac{1}{\psi^{n-1}} \, ds \le -1 \, , 
\end{equation*}
otherwise for any $ 0 < \epsilon < \frac{p}{p+1} $ and suitable constants $ r_\epsilon>r_0 $ and $ C_\epsilon>0 $ we would deduce the inequalities
$$
u(r) \le C_\epsilon \left( \int_r^{+\infty} \frac{1}{\psi^{n-1}} \, ds\right)^{1-\epsilon} \quad \text{and} \quad v(r) \ge \frac{C}{C_\epsilon} \left( \int_r^{+\infty} \frac{1}{\psi^{n-1}} \, ds\right)^{\epsilon} \qquad \forall r \ge r_\epsilon \, ,
$$
which are inconsistent with \eqref{C-1}. Hence, we can infer the existence of a sequence $ r_m \to +\infty $ and $ \beta \in [0,1) $ such that 
\begin{equation}\label{C-3}
\lim_{m \to \infty} \frac{u'(r_m)}{u(r_m)} \, \psi^{n-1} (r_m)\int_{r_m}^{+\infty} \frac{1}{\psi^{n-1}} \, ds = -\beta \, .
\end{equation}
Taking $ r=r_m $ in \eqref{eq-u}, by combining the decreasing monotonicity of $v$ and \eqref{C-1} we reach
\begin{equation*}\label{C-3-bis}
\begin{aligned}
0 =  & \, 1 + \frac{u'(r_m)}{u(r_m)} \, \psi^{n-1}(r_m) \int_{r_m}^{+\infty} \frac{1}{\psi^{n-1}} \, ds - \frac{1}{u(r_m)} \int_{r_m}^{+\infty} \frac{1}{\psi^{n-1} (s)} \left(\int_{r_m}
^s v^q \, \psi^{n-1} \, dt\right)ds  \\
\ge & \, 1 + \frac{u'(r_m)}{u(r_m)} \, \psi^{n-1}(r_m) \int_{r_m}^{+\infty} \frac{1}{\psi^{n-1}} \, ds - \frac{v^q(r_m)}{u(r_m)} \int_{r_m}^{+\infty} \Theta \, ds \\
\ge & \,  1 + \frac{u'(r_m)}{u(r_m)} \, \psi^{n-1} (r_m) \int_{r_m}^{+\infty} \frac{1}{\psi^{n-1}} \, ds - c^q u^{pq-1}(r_m) \, \int_{r_m}^{+\infty}  \Theta \, ds
\end{aligned}
\end{equation*}
for all $ m \in \N $, which leads again to the contradiction $ 0 \ge 1-\beta $ as $ m \to \infty $, since \eqref{C-3} holds and $ pq>1 $. The proof is thus complete.
\end{proof}

The above proposition motivates a better understanding of the asymptotic behavior of globally positive solutions when $ \Theta \in L^1(\R^+) $, as for the moment we only know that at least one of the two components has a strictly positive limit. This is the main purpose of the next intermediate results. 

\begin{lemma}\label{lem-interval}
Let $ p,q>0 $. Let $ \xi>0 $ and $ \eta_2 > \eta_1>0 $. Then, if $ (u_1,v_1) $ and $ (u_2,v_2) $ are two globally positive solutions to \eqref{Cauchy pb} starting from $ (\xi,\eta_1) $ and $ (\xi,\eta_2) $, respectively,  for each $ \eta \in (\eta_1,\eta_2) $ there exists a globally positive solution to \eqref{Cauchy pb} starting from $ (\xi,\eta) $. 
\end{lemma}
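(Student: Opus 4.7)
The plan is to produce the sought globally positive solution simply as the local positive solution of \eqref{Cauchy pb} with initial data $(\xi, \eta)$, and to show that its maximal positivity radius $R := R_{\xi,\eta}$ must be infinite. The key tool will be the ordering Lemma \ref{ordering}, which will sandwich this candidate between the two given globally positive solutions, combined with Proposition \ref{thm: non-ex bdd} to rule out the simultaneous-vanishing scenario at finite radius.

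I would denote by $(u_1, v_1)$ and $(u_2, v_2)$ the globally positive solutions corresponding to the initial data $(\xi, \eta_1)$ and $(\xi, \eta_2)$, and let $(u, v)$ be the local positive solution of \eqref{Cauchy pb} from $(\xi, \eta)$ produced by Lemma \ref{lem: lem}, extended maximally as long as it stays positive. On the open interval $(0, R)$ all three solutions are positive, so the hypotheses of Lemma \ref{ordering} are satisfied. Applying it first to the pair $(u_1, v_1), (u, v)$ (with common first component $\xi$ and second components $\eta_1 < \eta$), and then to the pair $(u, v), (u_2, v_2)$ (with common first component $\xi$ and second components $\eta < \eta_2$), I obtain the double sandwich
$$
u_2(r) < u(r) < u_1(r), \qquad v_1(r) < v(r) < v_2(r), \qquad \forall r \in (0, R).
$$

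To conclude I would argue by contradiction assuming $R < +\infty$. Since $p,q$ fulfill \eqref{crit-sup}, Proposition \ref{thm: non-ex bdd} forbids the simultaneous vanishing $u(R) = v(R) = 0$, so by definition of $R_{\xi,\eta}$ exactly one of $u(R) = 0$ or $v(R) = 0$ must occur. Passing to the limit $r \to R^{-}$ in the sandwich above, by continuity, in the first case one obtains $0 = u(R) \ge u_2(R)$, and in the second case $0 = v(R) \ge v_1(R)$; each of these contradicts the global positivity of $(u_2, v_2)$ or $(u_1, v_1)$, respectively. Hence $R = +\infty$ and $(u, v)$ is the desired globally positive solution starting from $(\xi, \eta)$. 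I do not foresee a substantial obstacle in this argument, since everything follows from tools already established in the paper; the only point requiring attention is the applicability of Lemma \ref{ordering} on the full interval $(0, R)$, which is guaranteed because all three solutions are positive there by construction.
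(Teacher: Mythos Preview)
Your approach is essentially the same as the paper's: construct the local solution from $(\xi,\eta)$, sandwich it via Lemma~\ref{ordering}, and derive a contradiction if $R<+\infty$. One small point, however: the lemma is stated for arbitrary $p,q>0$, without assuming \eqref{crit-sup}, so your appeal to Proposition~\ref{thm: non-ex bdd} is not justified by the hypotheses. It is also unnecessary: by the very definition of $R_{\xi,\eta}$ at least one of $u(R)=0$ or $v(R)=0$ holds, and either alternative already contradicts the sandwich (giving $u_2(R)\le 0$ or $v_1(R)\le 0$), regardless of whether both vanish. Dropping that step, your proof coincides with the paper's.
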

\begin{proof}
A locally positive solution $ (u,v) $ to \eqref{Cauchy pb} always exists by shooting, and it continues to exist as long as it is positive (recall the results of Subsection \ref{loc-exi}). So, let $ b \equiv R_{\xi,\eta}  \in (0,+\infty] $ be the largest number for which $ (u,v) $ is positive in the interval $ (0,b) $. If, by contradiction, $ b < +\infty $ then its definition would imply that either $ u(b)=0 $ or $ v(b)=0 $. However, due to Lemma \ref{ordering}, we can infer that 
$$
u_2(r) < u(r) < u_1(r) \quad \text{and} \quad v_1(r) < v(r) < v_2(r)  \qquad \forall r \in (0,b) \, ,
$$
which are clearly inconsistent with both $ u(b)=0 $ and $ v(b)=0 $, since $ (u_1,v_1) $ and $ (u_2,v_2) $ are globally positive solutions by assumption.   
\end{proof}

In the following, we let $ C_b\!\left([0,+\infty); \R^2 \right) $ denote the space of globally bounded and continuous functions on $ [0,+\infty) $ with values in $ \R^2 $. For notational convenience, we set $ | (a_1,a_2) | := |a_1| \vee |a_2|  $ for any $ a_1, a_2 \in \R $.  

\begin{lemma}\label{L1-compact}
	Let $ \Theta \in L^1(\R^+) $. Given any  $ C_1 ,C_2 >0 $, the set 
	\begin{equation}\label{compact}
	Z := \left\{ (u,v) \in C_b\!\left([0,+\infty) ; \R^2 \right) \Bigg| \begin{array}{l} \ \left\| (u,v) \right\|_\infty  \le C_1 \, , \\  \ \displaystyle \left| \left( u(r), v(r) \right) - \left( u(s) , v(s) \right) \right| \le C_2 \, \int_s^r \Theta \, dt \quad \forall r>s\ge 0 \end{array} \right\}
	\end{equation}
	is compact in $ C_b\!\left([0,+\infty); \R^2 \right)  $. 
\end{lemma}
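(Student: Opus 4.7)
The plan is to combine the classical Arzelà-Ascoli theorem on compact intervals with a diagonal extraction and a tail estimate based on the integrability of $\Theta$. First I would verify that $Z$ is closed: given a sequence $(u_n,v_n) \in Z$ converging uniformly to some $(u,v) \in C_b([0,+\infty);\R^2)$, the uniform bound $\|(u,v)\|_\infty \le C_1$ is obviously preserved, and the oscillation inequality
$$
\left| (u(r),v(r))-(u(s),v(s)) \right| \le C_2 \int_s^r \Theta \, dt
$$
passes to the pointwise limit for every $r>s\ge 0$, so that $(u,v) \in Z$.

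For sequential compactness, I would start from a sequence $(u_n,v_n) \in Z$ and observe that on any compact interval $[0,N]$ the family is uniformly bounded and equicontinuous, since $\Theta$ is locally integrable and the modulus of continuity $ \omega(r,s) := C_2 \int_s^r \Theta \, dt $ is common to all elements of $Z$. The Arzelà-Ascoli theorem then yields a subsequence converging uniformly on $[0,N]$, and a standard diagonal argument produces a further subsequence (still denoted $(u_n,v_n)$) converging locally uniformly on $[0,+\infty)$ to some $(u,v)$, which by the argument above belongs to $Z$.

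The main remaining issue, and the only genuinely nontrivial point, is to upgrade the local uniform convergence to global uniform convergence on $[0,+\infty)$. Here the assumption $\Theta \in L^1(\R^+)$ is essential. Given $\epsilon>0$, I would pick $R>0$ so large that $C_2 \int_R^{+\infty} \Theta \, dt < \epsilon/3$; then for every $n$ and every $r \ge R$,
$$
\left| (u_n(r),v_n(r)) - (u_n(R),v_n(R)) \right| \le C_2 \int_R^{r} \Theta \, dt < \epsilon/3 \, ,
$$
and the same bound holds for the limit $(u,v)$. Together with uniform convergence on $[0,R]$, which controls the central piece by $\epsilon/3$ for $n$ large, the triangle inequality gives $\|(u_n,v_n)-(u,v)\|_\infty < \epsilon$ eventually. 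This establishes the desired compactness, completing the proof.
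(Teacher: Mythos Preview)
Your proof is correct and follows essentially the same route as the paper: Ascoli--Arzel\`a on compact intervals plus a diagonal extraction for local uniform convergence, then the tail estimate $C_2\int_R^{+\infty}\Theta<\epsilon/3$ and a three-term triangle inequality to upgrade to global uniform convergence. The only cosmetic differences are that the paper notes $\Theta$ is locally \emph{bounded} (rather than merely locally integrable) to get equicontinuity, and it folds the closedness of $Z$ into the observation that pointwise limits inherit the defining inequalities, rather than stating it separately.
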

\begin{proof}
Let $ \{ (u_k,v_k) \} \subset Z $. Since $ \Theta $ is locally bounded, from the definition of $ Z $ it follows that for every $ R>0 $ the sequence is uniformly bounded and uniformly Lipschitz in $ C\!\left( [0,R];\R^2 \right) $. Hence, by the Ascoli-Arzel\`a theorem and a standard diagonal procedure we can infer that there exist $ (\bar{u},\bar{v}) \in C\!\left([0,+\infty); \R^2 \right) $ and a subsequence $ \left\{ \left(u_{k_j},v_{k_j}\right) \right\} $ such that 
	\begin{equation}\label{loc-conv}
	\left(u_{k_j},v_{k_j}\right) \underset{j \to \infty}{\longrightarrow} (\bar{u},\bar{v})  \qquad \text{in } C\!\left( [0,R];\R^2 \right) \ \text{for every } R>0 \, . 
	\end{equation}
	In particular, pointwise convergence takes place, which readily ensures that $ (\bar{u},\bar{v}) \in Z $. We are therefore left with proving that convergence is uniform in the whole half line $ [0,+\infty) $. To this aim, for any arbitrary $ \varepsilon>0 $ we select $ R_\varepsilon > 0 $ so large that 
	\begin{equation}\label{loc-conv-int}
	\int_{R_\varepsilon}^{+\infty} \Theta \, dt < \frac{\varepsilon}{3C_2} \, .
	\end{equation}
	By virtue of \eqref{loc-conv}, there exists $ j_\varepsilon \in \N $ such that
	\begin{equation}\label{loc-conv-int-1}
	\left| \left(u_{k_j}(r),v_{k_j}(r)\right) - (\bar{u}(r),\bar{v}(r))  \right| < \frac{\varepsilon}{3} \qquad \forall r \in [0,R_\varepsilon] \, , \ \forall j > j_\varepsilon \, . 
	\end{equation}
	On the other hand, for larger values of $ r $ we have 
	\begin{equation}\label{loc-conv-int-2}
	\begin{aligned}
	 \big| \! \left( u_{k_j}(r), v_{k_j}(r) \right) & - \left(\bar{u}(r),\bar{v}(r)\right)\!  \big| & \\
	\le &  \left| \left(u_{k_j}(r),v_{k_j}(r)\right) - \left(u_{k_j}(R_\varepsilon),v_{k_j}(R_\varepsilon)\right) \right| + \left|  \left(u_{k_j}(R_\varepsilon),v_{k_j}(R_\varepsilon)\right) - \left(\bar{u}(R_\varepsilon),\bar{v}(R_\varepsilon)\right)  \right| \\
	 & + \left| \left(\bar{u}(R_\varepsilon),\bar{v}(R_\varepsilon)\right) - \left(\bar{u}(r),\bar{v}(r)\right) \right| \le 2C_2 \, \int_{R_\varepsilon}^r \Theta \, dt  +  \frac{\varepsilon}{3} < \varepsilon  \qquad \forall r > R_\varepsilon \, , \ \forall j > j_\varepsilon \, , 
	\end{aligned}
	\end{equation}
	where we exploited the definition of $ Z $ along with \eqref{loc-conv-int}. Taking the limit as $ j \to \infty $ in \eqref{loc-conv-int-1} and \eqref{loc-conv-int-2} we end up with
	$$
	\limsup_{j \to \infty} \sup_{r \in [0,+\infty)} \left| \left(u_{k_j}(r),v_{k_j}(r)\right) - \left(\bar{u}(r),\bar{v}(r)\right) \right| \le \varepsilon \, ,
	$$
	which completes the proof in view of the arbitrariness of $ \varepsilon $.
\end{proof}

\begin{lemma}\label{first-zero}
Let $ p,q>0 $. Given $ (\xi,\eta) \in (0,+\infty)^2 $, suppose that $ (u,v) $ is a globally positive solution to \eqref{Cauchy pb}. Let $ \left\{ \left( \xi_k,\eta_k \right) \right\} \subset (0,+\infty)^2 $ be a sequence that converges to $ (\xi,\eta) $. Let $ (u_k,v_k) $ denote the local solution to \eqref{Cauchy pb} starting from $ (\xi_k,\eta_k) $ and $ [0,R_k) $ its maximal positivity interval according to \eqref{def R}, with $ R_k \equiv R_{\xi_k,\eta_k} \in  (0,+\infty] $, for each $ k \in \N $. Then 
\beqn
\lim_{k \to \infty} R_k = +\infty \qquad \text{and} \qquad  (u_k,v_k) \underset{k \to \infty} \longrightarrow  (u,v) \quad \text{locally uniformly in } [0,+\infty) \, . 
\eeqn
\end{lemma}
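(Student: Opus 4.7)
The plan is to prove, for every fixed $S>0$, that $R_k>S$ for all sufficiently large $k$ and that $(u_k,v_k) \to (u,v)$ uniformly on $[0,S]$. Letting $S \to +\infty$ will then yield both conclusions of the lemma.

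The first step is to exploit the global positivity of $(u,v)$ to reduce matters to a setting where standard continuous-dependence ideas apply. Since $(u,v)$ is continuous and strictly positive on the compact interval $[0,S+1]$, there exist constants $m,M>0$ (depending on $S$) with
\[
2m \le u(r) \wedge v(r) \le u(r) \vee v(r) \le M \qquad \forall r \in [0,S+1] \, ,
\]
where the upper bound follows also from Lemma \ref{lem: mon}. Inside the closed ``tube''
\[
Y_S := \left\{ (w,z) \in C\!\left([0,S+1];\R^2\right) : \ \left\|(w,z)-(u,v)\right\|_\infty \le m \right\}
\]
the nonlinearities $w \mapsto |w|^{p-1}w = w^p$ and $z \mapsto |z|^{q-1}z = z^q$ are of class $C^1$ and uniformly Lipschitz (with constant depending only on $m,M,p,q$), so the pathological non-uniqueness highlighted in Subsection \ref{loc-exi} in the case $p\wedge q<1$ is not triggered here.

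The second step is a Picard-type fixed-point construction mimicking those of Lemmas \ref{lem: lem} and \ref{lem: cont}. For every $k$ I would consider the integral operator $F_k$ associated with the Cauchy data $(\xi_k,\eta_k)$, namely
\[
\begin{aligned}
F_{k,1}(w,z)(r) & := \xi_k - \int_0^r \frac{1}{\psi^{n-1}(s)} \left( \int_0^s z^q \, \psi^{n-1} \, dt \right) ds \, , \\
F_{k,2}(w,z)(r) & := \eta_k - \int_0^r \frac{1}{\psi^{n-1}(s)} \left( \int_0^s w^p \, \psi^{n-1} \, dt \right) ds \, ,
\end{aligned}
\]
acting on $Y_S$. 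Splitting $[0,S+1]$ into finitely many subintervals of small common length $\delta>0$ (chosen depending on $m,M,p,q,\psi,n,S$ but \emph{not} on $k$), one checks that on each subinterval the restriction of $F_k$ is a contraction into $Y_S$ provided $|\xi_k-\xi| \vee |\eta_k-\eta|$ is small enough; the arguments are the same as in \eqref{eul1}--\eqref{eul2} and \eqref{2391}--\eqref{2392}. The Banach fixed-point theorem then produces $(u_k,v_k) \in Y_S$, which, since the solution of \eqref{Cauchy pb} is unique as long as it is strictly positive, coincides with the local solution in the statement. In particular $u_k,v_k \ge m > 0$ on $[0,S+1]$, so $R_k > S+1 > S$. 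A standard Gronwall-type argument applied subinterval by subinterval then gives the quantitative estimate
\[
\left\|(u_k,v_k)-(u,v)\right\|_{C([0,S+1];\R^2)} \le C \left( |\xi_k-\xi| \vee |\eta_k-\eta| \right) ,
\]
with $C=C(S,m,M,p,q,\psi,n)$, whose right-hand side tends to $0$.

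The principal obstacle I anticipate is precisely the non-Lipschitz case $p \wedge q < 1$, which prevents invoking an off-the-shelf continuous-dependence theorem globally on $[0,S+1]$. The workaround above---restricting to the tube $Y_S$, on which both components are bounded below by $m>0$ and the system is genuinely Lipschitz---is in the same spirit as step (i) in the proof of Lemma \ref{open-sets}. A more synthetic but essentially equivalent alternative would be to iterate Lemma \ref{lem: cont} finitely many times along $[0,S]$, using the fact that its quantitative parameters $C_1,C_2$ depend continuously on the initial data and that at each step the Cauchy data $(u_k(\sigma_j),v_k(\sigma_j),u_k'(\sigma_j),v_k'(\sigma_j))$ inherit the convergence to those of $(u,v)$ from the previous subinterval.
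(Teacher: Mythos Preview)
Your argument is correct and follows a genuinely different route from the paper's. You work directly: fix $S>0$, exploit the Lipschitz character of the system inside the tube $Y_S$ where both components are bounded below by $m>0$, and iterate a Picard contraction on short subintervals to keep $(u_k,v_k)$ inside the tube for large $k$, yielding $R_k>S$ and uniform convergence on $[0,S]$ with a quantitative Gronwall rate. The paper instead argues by contradiction and compactness: assuming (along a subsequence) that $R_k\to R<+\infty$ with, say, $u_k(R_k)=0$, it uses Ascoli--Arzel\`a (a local version of Lemma~\ref{L1-compact}) to extract a convergent subsequence on each $[0,S]$ with $S<R$, identifies the limit with $(u,v)$ by uniqueness of positive solutions, and then passes to the limit in the integral identity for $u_k(R_k)=0$ to obtain $0\ge u(R)$, contradicting global positivity.

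Your approach is more constructive and even delivers a convergence rate; the paper's is softer, avoids tracking Lipschitz constants, and dovetails with the compactness machinery already in place for Lemma~\ref{glob-conv-unif}. One small notational wrinkle: the operator $F_k$ you write integrates from $0$, so its ``restriction to a subinterval'' $[\sigma_j,\sigma_{j+1}]$ is not literally a self-map of functions on that subinterval alone. What you really mean is the analogue of the operator in Lemma~\ref{lem: cont}, with Cauchy data $(u_k(\sigma_j),v_k(\sigma_j),u_k'(\sigma_j),v_k'(\sigma_j))$ carried over from the previous step --- precisely the alternative you sketch in your final paragraph. With that reading the iteration is standard and the proof goes through.
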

\begin{proof}
With no loss of generality, we may assume that $ R_k < + \infty $ for all $ k \in \mathbb{N} $, so either $ u_k(R_k)=0 $ or $ v_k(R_k)=0 $. For simplicity, and up to subsequences, we discuss the former case only (if instead $ v_k(R_k)=0 $ one argues similarly). Hence, from a further integration of \eqref{eq int form} we obtain
\beq\label{zero-abs}
0 = \xi_k - \int_0^{R_k} \frac{1}{\psi^{n-1}(s)} \left( \int_0^s v_k^q \, \psi^{n-1} \, dt \right) ds \ge \xi_k - \eta_k^q \int_0^{R_k} \Theta \, ds \, ,
\eeq 
thus it is plain that $ \{ R_k \} $ stays bounded away from zero. Still up to subsequences, we may therefore assume, in addition, that $ R_k \to R $ as $ k \to \infty $ for some $ R \in (0,+\infty] $. Suppose by contradiction that $ R < +\infty $. As a result, for every $ S \in (0,R) $ the sequence $ \{ (u_k,v_k) \} $ is eventually positive and lies in a set of the form \eqref{compact}, up to replacing $ [0,+\infty) $ with $ [0,S] $. A local version of Lemma \ref{L1-compact} is therefore applicable and guarantees that, again up to subsequences, it converges in $ C\!\left([0,S] ; \R^2 \right) $ to a nonnegative function $ (\bar{u},\bar{v}) $, so by passing to the limit in the integral formulations satisfied by $ (u_k,v_k) $ we find that 
$$
\bar{u}(r) = \xi - \int_0^{r} \frac{1}{\psi^{n-1}(s)} \left( \int_0^s \bar{v}^q \, \psi^{n-1} \, dt \right) ds \qquad \forall r \in [0,S]
$$
and
$$
\bar{v}(r) = \eta - \int_0^{r} \frac{1}{\psi^{n-1}(s)} \left( \int_0^s \bar{u}^p \, \psi^{n-1} \, dt \right)
 ds \qquad \forall r \in [0,S] \, .
$$
However, this means that $ (\bar{u},\bar{v}) $ solves the same problem as $ (u,v) $ in $ [0,S] $, and therefore it must coincide with the latter in such interval (recall that $ (u,v) $ is globally positive by assumption). In particular, we can rewrite \eqref{zero-abs}, for large $k$, as 
\beqn
\begin{aligned}
0 = & \, \xi_k - \int_0^{R_k} \frac{1}{\psi^{n-1}(s)} \left( \int_0^s v_k^q \, \psi^{n-1} \, dt \right) ds \\
= & \, \xi_k - \int_0^{S} \frac{1}{\psi^{n-1}(s)} \left( \int_0^s v_k^q \, \psi^{n-1} \, dt \right) ds - \int_S^{R_k} \frac{1}{\psi^{n-1}(s)} \left( \int_0^s v_k^q \, \psi^{n-1} \, dt \right) ds  \\
\ge & \,  \xi_k - \int_0^{S} \frac{1}{\psi^{n-1}(s)} \left( \int_0^s v_k^q \, \psi^{n-1} \, dt \right) ds - \eta_k^q  \int_S^{R_k} \Theta  \, ds  \, ,
\end{aligned}
\eeqn 
whence, taking limits as $ k \to \infty $ and using the above established convergence, it follows that
\beqn
0 \ge \xi -  \int_0^{S} \frac{1}{\psi^{n-1}(s)} \left( \int_0^s v^q \, \psi^{n-1} \, dt \right) ds - \eta^q  \int_S^{R} \Theta \, ds =  u(S) - \eta^q  \int_S^{R}  \Theta \, ds \, .
\eeqn
Finally, letting $ S \uparrow R $, we would end up with
$$
0 \ge u(R) \, ,
$$
which is in contradiction with the global positivity of $u$. As a result, the only possibility is that $ R_k \to +\infty $ as $ k \to \infty $, and thus the previously shown uniform convergence holds (at least) locally in the whole $ [0,+\infty) $.  
\end{proof}

\begin{lemma} \label{glob-conv-unif} 
Let $ \Theta \in L^1(\R^+) $ and $ p,q>0 $. Let $ \left\{ \left( \xi_k,\eta_k \right) \right\} \subset (0,+\infty)^2 $ be a sequence that converges to some $ (\xi,\eta) \in (0,+\infty)^2 $, such that $ (u_k,v_k) $ is a globally positive solution to \eqref{Cauchy pb} starting from $ (\xi_k,\eta_k) $, for each $ k \in \N $. Then 
	\beqn
	(u_k,v_k) \underset{k \to \infty} \longrightarrow  (u,v) \qquad \text{uniformly in } [0,+\infty) \, ,
	\eeqn
	where $ (u,v) $ is a globally positive solution to \eqref{Cauchy pb} starting from $ (\xi,\eta) $.
\end{lemma}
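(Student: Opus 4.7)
The strategy is to apply the compactness Lemma \ref{L1-compact} to the sequence $\{(u_k,v_k)\}$ in order to extract a uniformly convergent subsequence, then identify its limit with the unique globally positive solution of \eqref{Cauchy pb} starting from $(\xi,\eta)$, and finally upgrade subsequential to full-sequence uniform convergence.

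First I would verify that $\{(u_k,v_k)\}$ lies in a set $Z$ of the form \eqref{compact} with constants independent of $k$. Since $(\xi_k,\eta_k) \to (\xi,\eta)$, we may fix $C>0$ with $\xi_k \vee \eta_k \le C$ for every $k$; Lemma \ref{lem: mon} then gives $0 < u_k \le \xi_k \le C$ and $0 < v_k \le \eta_k \le C$. Using \eqref{eq int form} together with these uniform bounds we obtain
\begin{equation*}
|u_k(r)-u_k(s)| = \int_s^r \frac{1}{\psi^{n-1}(t)}\int_0^t v_k^q \, \psi^{n-1} \, d\tau\, dt \le C^q \int_s^r \Theta\, dt \qquad \forall r>s\ge 0 \, ,
\end{equation*}
and the symmetric estimate for $v_k$ with $C^p$. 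Hence $\{(u_k,v_k)\} \subset Z$ with $C_1 := C$ and $C_2 := C^p \vee C^q$, and Lemma \ref{L1-compact} yields a subsequence $(u_{k_j},v_{k_j})$ converging uniformly on $[0,+\infty)$ to some $(u^*,v^*) \in Z$.

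Next I would pass to the limit in the integrated form
\begin{equation*}
u_k(r) = \xi_k - \int_0^r \frac{1}{\psi^{n-1}(s)}\int_0^s v_k^q \, \psi^{n-1}\, dt\, ds \, , \qquad v_k(r) = \eta_k - \int_0^r \frac{1}{\psi^{n-1}(s)}\int_0^s u_k^p \, \psi^{n-1}\, dt\, ds \, ,
\end{equation*}
which is legitimate by uniform convergence together with the integrable majorants $C^q\Theta$ and $C^p\Theta$. This shows that $(u^*,v^*)$ solves \eqref{Cauchy pb} in its integral form with initial data $(\xi,\eta)$, and is in particular nonnegative and nonincreasing. To obtain strict positivity on all of $[0,+\infty)$, suppose by contradiction that $u^*(R)=0$ for some finite $R>0$; monotonicity gives $u^* \equiv 0$ on $[R,+\infty)$, so differentiating the integral formula for $u^*$ forces $\int_0^r (v^*)^q \, \psi^{n-1}\, dt = 0$ for every $r>R$. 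Since $\psi^{n-1}>0$ on $(0,+\infty)$, this entails $v^* \equiv 0$ on $(0,r)$, contradicting $v^*(0)=\eta>0$ by continuity. The symmetric argument rules out a zero of $v^*$, and $(u^*,v^*)$ is therefore globally positive.

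Because positive solutions of \eqref{Cauchy pb} are unique as long as they remain positive (as noted after Lemma \ref{lem: lem}), the limit $(u^*,v^*)$ coincides with the unique globally positive solution $(u,v)$ starting from $(\xi,\eta)$. In particular the limit is independent of the extracted subsequence, and a standard contradiction argument (any subsequence bounded away from $(u,v)$ in $C_b([0,+\infty);\R^2)$ would, by the above compactness, admit a further subsequence converging uniformly to the same $(u,v)$) upgrades the subsequential convergence to the full-sequence statement. The only truly nontrivial ingredient is securing a uniform modulus of continuity on the whole half-line, and this is precisely where the stochastic incompleteness hypothesis $\Theta \in L^1(\R^+)$ is used: without it, $Z$ would fail to be compact in $C_b$ and one could only recover convergence locally, as in Lemma \ref{first-zero}.
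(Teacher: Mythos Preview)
Your proof is correct and follows essentially the same route as the paper: show that $\{(u_k,v_k)\}$ lies in a set of the form \eqref{compact}, apply Lemma \ref{L1-compact} to extract a uniformly convergent subsequence, pass to the limit in the integral formulation of \eqref{Cauchy pb}, and use uniqueness of positive solutions plus the subsequence principle to upgrade to full-sequence convergence. In fact you are more careful than the paper on one point: the paper simply asserts that the nonnegative limit ``is identified as the globally positive solution starting from $(\xi,\eta)$'', whereas you supply the explicit contradiction argument (a zero of $u^*$ would force $v^*\equiv 0$, contradicting $v^*(0)=\eta>0$) that rules out the limit touching zero.
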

\begin{proof}
It is enough to notice that, due to the positivity and the monotonicity of the components, the inequalities 
	$$
	0 \le u_k(r) \le \xi_k \quad \text{and} \quad 0 \le v_k (r)\le \eta_k  \qquad \forall r \in [0,+\infty)
	$$ 
	hold for every $ k \in \N $, thus it is readily seen that $ \{ (u_k,v_k) \} $ is contained in a set of the form \eqref{compact}. Hence, by virtue of Lemma \ref{L1-compact}, it admits a uniformly convergent subsequence $ \left\{ \left( u_{k_j} , v_{k_j} \right) \right\} $ to some $( u,v ) \in C_b\!\left( [0,+\infty) ; \R^2 \right) $, which is therefore also nonnegative. On the other hand, by passing to the limit in the integral identities
	$$
	u_k(r) = \xi_k - \int_0^r \frac{1}{\psi^{n-1}(s)} \left( \int_0^s v_k^q \, \psi^{n-1} \, dt \right) ds \, , \qquad v_k(r) = \eta_k - \int_0^r \frac{1}{\psi^{n-1}(s)} \left( \int_0^s u_k^p \, \psi^{n-1} \, dt \right) ds \, , 
	$$
	we infer that $ (u,v) $ actually solves \eqref{Cauchy pb} with initial datum $ (\xi,\eta) $, so it is identified as the globally positive solution starting from $ (\xi,\eta) $. Since the argument can be repeated along every subsequence of $ \{ (u_k,v_k) \} $, the claimed result holds for the whole sequence.
\end{proof}

\begin{lemma}\label{extendibility}
Let $ \Theta \in L^1(\R^+) $ and $ p,q>0 $. Given $ (\xi,\eta) \in (0,+\infty)^2 $, suppose that $ (u,v) $ is a globally positive solution to \eqref{Cauchy pb}. Then:

\begin{enumerate}[i)]

\item If $ \ell_u > 0 $, there exists $ \varepsilon>0 $ such that for every $ \bar{\eta} \in (\eta,\eta+\varepsilon) $ the solution to \eqref{Cauchy pb} starting from $ (\xi,\bar{\eta}) $ is globally positive;

\item If $ \ell_v > 0 $, there exists $ \varepsilon \in (0,\eta) $ such that for every $ \hat{\eta} \in (\eta-\varepsilon,\eta) $ the solution to \eqref{Cauchy pb} starting from $ (\xi,\hat{\eta}) $ is globally positive. 

\end{enumerate}

\end{lemma}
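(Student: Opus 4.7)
The plan is to argue by contradiction in case (i), supposing there is a sequence $\bar\eta_k \downarrow \eta$ for which the solution $(\bar u_k, \bar v_k)$ to \eqref{Cauchy pb} starting from $(\xi,\bar\eta_k)$ is not globally positive. By Lemma \ref{first-zero}, the positivity radii $R_k := R_{\xi,\bar\eta_k}$ must then diverge to $+\infty$ and $(\bar u_k,\bar v_k) \to (u,v)$ locally uniformly on $[0,+\infty)$. Applying Lemma \ref{ordering} with $\xi_1 = \xi_2 = \xi$ and $\eta_1 = \eta < \bar\eta_k = \eta_2$ on the common positivity interval $(0,R_k)$ yields $\bar v_k > v$ there, so by continuity $\bar v_k(R_k) \ge v(R_k) > 0$; since by the very definition of $R_k$ at least one component must vanish at $R_k$, we conclude $\bar u_k(R_k) = 0$.

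The core step is a tail estimate exploiting $\Theta \in L^1(\R^+)$. For any fixed $R>0$, for all sufficiently large $k$ we have $R_k > R$; integrating the first equation of \eqref{eq int form} on $[R,R_k]$, using $\bar u_k(R_k)=0$ and the crude bound $\bar v_k \le \bar\eta_k$, gives
\begin{equation*}
\bar u_k(R) \,=\, \int_R^{R_k} \frac{1}{\psi^{n-1}(s)} \left(\int_0^s \bar v_k^{\,q}\, \psi^{n-1}\,dt\right) ds \,\le\, \bar\eta_k^{\,q} \int_R^{+\infty} \Theta\,ds.
\end{equation*}
Sending $k \to \infty$, by the local uniform convergence $\bar u_k(R) \to u(R)$ and $\bar\eta_k \to \eta$, I would obtain $u(R) \le \eta^q \int_R^{+\infty}\Theta\,ds$ for every $R > 0$. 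Letting $R\to+\infty$, the right-hand side vanishes since $\Theta \in L^1(\R^+)$, whereas the left-hand side tends to $\ell_u > 0$, which is the desired contradiction. Case (ii) would then follow by a fully symmetric shooting: perturbing $\hat\eta_k \uparrow \eta$, Lemma \ref{ordering} now produces $\hat u_k > u > 0$ and $\hat v_k < v$ on the common positivity interval, forcing $\hat v_k(R_k) = 0$, and the analogous tail estimate with the crude bound $\hat u_k \le \xi$ in the second equation of \eqref{eq int form} yields $v(R) \le \xi^p \int_R^{+\infty}\Theta\,ds$, contradicting $\ell_v > 0$.

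The subtle point that I expect will require the most care is the correct identification of which of the two components vanishes at $R_k$: this hinges on combining the strict ordering of Lemma \ref{ordering} with the strict positivity of the reference solution $(u,v)$ on all of $[0,+\infty)$, and it is precisely what makes the argument reversible in the two subcases. Once this is in place, the rest is a routine consequence of the integrability of $\Theta$ together with the locally uniform continuous dependence supplied by Lemma \ref{first-zero}; in particular, no finer asymptotic information about $(u,v)$ beyond $\ell_u>0$ (respectively $\ell_v>0$) is needed.
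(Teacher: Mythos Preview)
Your proof is correct and follows essentially the same route as the paper's: a contradiction argument using Lemma~\ref{first-zero} for local uniform convergence, Lemma~\ref{ordering} to pin down which component vanishes at $R_k$, and the integrability of $\Theta$ to derive a tail bound contradicting $\ell_u>0$ (respectively $\ell_v>0$). The only cosmetic difference is that the paper writes the key identity as $0=u_k(R_k)=\xi-\int_0^R\cdots-\int_R^{R_k}\cdots$ before estimating the tail, whereas you integrate directly on $[R,R_k]$; the two are equivalent.
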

\begin{proof}
We proceed in both cases with an argument by contradiction. 

\begin{enumerate}[i)]

\item If the thesis were false, then there would exist a sequence $ \varepsilon_k \downarrow 0 $ such that the local solution $ (u_k,v_k) $ to \eqref{Cauchy pb}, starting from $ (\xi,\eta+\varepsilon_k) $, has a maximal positivity interval $ [0,R_k) $ with $ R_k \in (0,+\infty) $. Thanks to the comparison principle entailed by Lemma \ref{ordering}, it must necessarily be $ u_k $ that vanishes at $r=R_k$, so we can write 
\beq\label{lim-cont} 
\begin{aligned}
0 = & \, \xi - \int_0^{R} \frac{1}{\psi^{n-1}(s)} \left( \int_0^s v_k^q \, \psi^{n-1} \, dt \right) ds - \int_{R}^{R_k} \frac{1}{\psi^{n-1}(s)} \left( \int_0^s v_k^q \, \psi^{n-1} \, dt \right) ds \\
 \ge & \, \xi - \int_0^{R} \frac{1}{\psi^{n-1}(s)} \left( \int_0^s v_k^q \, \psi^{n-1} \, dt \right) ds - \left( \eta+\varepsilon_k \right)^q \int_{R}^{+\infty} \Theta \, ds \, ,
\end{aligned}
\eeq 
where $ R \in ( 0 , R_k ) $ is arbitrary but fixed for the moment. On the other hand, Lemma \ref{first-zero} ensures that $ R_k \to +\infty $ and $ \{ (u_k,v_k) \} $ converges locally uniformly to $ (u,v) $ as $ k \to \infty  $, thus we can pass to the limit in \eqref{lim-cont} to obtain 
\beqn\label{lim-cont-2}
0 \ge  \xi - \int_0^{R} \frac{1}{\psi^{n-1}(s)} \left( \int_0^s v^q \, \psi^{n-1} \, dt \right) ds - \eta^q \int_{R}^{+\infty}  \Theta \, ds =  u(R) - \eta^q \int_{R}^{+\infty}  \Theta \, ds \, .
\eeqn
Hence, by finally letting $ R \to +\infty $, we end up with the inequality
$$
0 \ge \ell_u \, ,
$$
which is absurd. 

\smallskip 

\item Similarly, to case i), denying the thesis would imply the existence of a sequence $ \varepsilon_k \downarrow 0 $ such that the local solution $ (u_k,v_k) $ to \eqref{Cauchy pb}, starting from $ (\xi,\eta-\varepsilon_k) $, has a maximal positivity interval $ [0,R_k) $ with $ R_k \in (0,+\infty) $. Still by virtue of Lemma \ref{ordering}, we deduce that in this case the component that vanishes at $ r=R_k $ is necessarily $ v $. Hence, as above we can write 
\beqn\label{lim-cont-3}
\begin{aligned}
0 = & \, \eta - \varepsilon_k - \int_0^{R} \frac{1}{\psi^{n-1}(s)} \left( \int_0^s u_k^p \, \psi^{n-1} \, dt \right) ds - \int_{R}^{R_k} \frac{1}{\psi^{n-1}(s)} \left( \int_0^s u_k^p \, \psi^{n-1} \, dt \right) ds \\
 \ge & \, \eta - \varepsilon_k - \int_0^{R} \frac{1}{\psi^{n-1}(s)} \left( \int_0^s u_k^p \, \psi^{n-1} \, dt \right) ds - \xi^p \int_{R}^{+\infty} \Theta \, ds 
\end{aligned} 
\eeqn 
for all $ R \in (0,R_k) $. Therefore, by using again Lemma \ref{first-zero} and passing to the limit as $ k \to \infty $, we obtain 
\beqn\label{lim-cont-4}
0 \ge  \eta - \int_0^{R} \frac{1}{\psi^{n-1}(s)}  \left( \int_0^s u^p \, \psi^{n-1} \, dt \right) ds - \xi^p \int_{R}^{+\infty}  \Theta \, ds  =  v(R) - \xi^p \int_{R}^{+\infty} \Theta \, ds \, ,
\eeqn
that is 
$$
0 \ge \ell_v
$$
upon taking the limit as $ R \to +\infty $, still a contradiction.  \qedhere
\end{enumerate}
\end{proof}

Before proving Theorem \ref{teo-cs-interv}, we establish a useful quantitative bound on the values of the limits at infinity $ \ell_u $ and $ \ell_v $. 

\begin{proposition}\label{abs-bound}
Let $ \Theta \in L^1(\R^+) $ and $ p,q>0 $ fulfill $ pq>1 $. Let $ (\xi,\eta) \in (0,+\infty)^2 $. Then, if $(u,v)$ is a globally positive solution to \eqref{Cauchy pb}, it holds
\beq\label{absb}
\ell_u \le \frac{ p^{\frac{q+1}{pq-1}} \left( q+1 \right)^{\frac{q+2}{pq-1}} }{ \left( p+1 \right)^{\frac{1}{pq-1}}  \left( pq-1 \right)^{\frac{q+1}{pq-1}} } \, \theta^{-\frac{q+1}{pq-1}}  \qquad \text{and} \qquad  \ell_v \le \frac{ q^{\frac{p+1}{pq-1}} \left( p+1 \right)^{\frac{p+2}{pq-1}} }{ \left( q+1 \right)^{\frac{1}{pq-1}}  \left( pq-1 \right)^{\frac{p+1}{pq-1}} } \, \theta^{-\frac{p+1}{pq-1}} \, , 
\eeq
where $ \theta $ was defined in \eqref{def small theta}. 
\end{proposition}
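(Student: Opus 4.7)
My plan is to combine monotonicity-based pointwise inequalities with a blow-up comparison for an autonomous ODE system obtained after a suitable time change. First, from the integral identities \eqref{eq int form} and the decreasing monotonicity of $ v $ (Lemma \ref{lem: mon}), which gives $ v(s)\ge v(r) $ for $ 0\le s\le r $, the elementary estimate
\[
-u'(r)\;=\;\frac{1}{\psi^{n-1}(r)}\int_0^r v^q\,\psi^{n-1}\,ds\;\ge\;v^q(r)\,\Theta(r)
\]
follows, together with its symmetric counterpart $ -v'(r)\ge u^p(r)\,\Theta(r) $.

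Next, I pass to the rescaled variable $ \tau:=\Phi(r) $, where $ \Phi(r):=\int_r^{+\infty}\Theta(s)\,ds $ defines a strictly decreasing $ C^1 $-diffeomorphism of $ [0,+\infty) $ onto $ (0,\theta] $ with $ \Phi'=-\Theta $ and $ \Phi(0)=\theta $. Setting $ U(\tau):=u(\Phi^{-1}(\tau)) $ and $ V(\tau):=v(\Phi^{-1}(\tau)) $, the chain rule turns the pointwise bounds above into the clean phase-plane differential inequalities
\[
U'(\tau)\;\ge\;V^q(\tau) \, , \qquad V'(\tau)\;\ge\;U^p(\tau) \, , \qquad \tau\in(0,\theta] \, ,
\]
with $ U(0)=\ell_u $, $ V(0)=\ell_v $, $ U(\theta)=\xi $ and $ V(\theta)=\eta $ all finite.

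The heart of the argument is then a comparison with the autonomous system
\[
\tilde U'=\tilde V^q \, , \qquad \tilde V'=\tilde U^p \, , \qquad \tilde U(0)=\ell_u \, , \quad \tilde V(0)=\ell_v \, ,
\]
whose right-hand side is componentwise nondecreasing. A standard monotone comparison principle (supplemented, in the non-Lipschitz regime $ p\wedge q<1 $, by an approximation argument in the spirit of Subsection \ref{loc-exi}) yields $ U\ge\tilde U $ and $ V\ge\tilde V $ on their common interval of existence. Since $ U(\theta) $ and $ V(\theta) $ are finite, the autonomous system cannot blow up before $ \tau=\theta $, so its blow-up time $ T_\ast $ must satisfy $ T_\ast\ge\theta $. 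Exploiting the conservation law
\[
\frac{\tilde U^{p+1}}{p+1}-\frac{\tilde V^{q+1}}{q+1}\;\equiv\;\frac{\ell_u^{p+1}}{p+1}-\frac{\ell_v^{q+1}}{q+1} \, ,
\]
I can eliminate $ \tilde U $ in favor of $ \tilde V $ and reduce the system to a separable ODE for $ \tilde V $. A direct quadrature then expresses $ T_\ast $ as an explicit integral depending only on $ \ell_u $, $ \ell_v $, $ p $ and $ q $; enforcing $ T_\ast\ge\theta $ and bounding this quadrature from above by an elementary expression delivers the bound on $ \ell_u $ stated in \eqref{absb}. The bound on $ \ell_v $ then follows by the symmetric argument obtained exchanging the roles of $ u $ and $ v $.

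The main obstacle I foresee lies in this last step: the exact quadrature is an incomplete Beta-type integral, and matching the specific algebraic constants appearing on the right-hand side of \eqref{absb} requires a carefully tuned elementary upper bound rather than a sharp evaluation. A secondary technical point is verifying the monotone comparison principle when $ p\wedge q<1 $, which should reduce to a standard approximation by Lipschitz nonlinearities.
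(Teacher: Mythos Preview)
Your approach is correct and takes a genuinely different route from the paper's, though both start from the same pointwise inequalities $-u'\ge v^q\Theta$ and $-v'\ge u^p\Theta$. The paper stays in the $r$ variable throughout: it integrates the first inequality to get $u(r)\ge\int_r^{+\infty}v^q\Theta\,ds$, substitutes this into the second, multiplies by $v^q$ and integrates again to obtain a self-contained differential inequality for $W(r):=\int_r^{+\infty}v^q\Theta\,ds$, and finally uses $v\ge\ell_v$ together with an elementary optimization over $r$ (splitting $\theta$ into $\int_0^r\Theta$ and $\int_r^{+\infty}\Theta$) to produce the constant directly. No comparison principle, no autonomous system, no special-function integral ever appears. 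Your blow-up comparison after the time change $\tau=\int_r^{+\infty}\Theta$ is conceptually cleaner and in principle sharper, since the exact blow-up time of $\tilde U'=\tilde V^q$, $\tilde V'=\tilde U^p$ encodes the optimal joint constraint on $(\ell_u,\ell_v)$.

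As for your anticipated obstacle, it dissolves with a single line: after the scaling you describe (setting $\ell_v=0$ for the $\ell_u$ bound and substituting $\tilde V=\ell_u^{(p+1)/(q+1)}[(q+1)/(p+1)]^{1/(q+1)}w$), the relevant quadrature reduces to
\[
J:=\int_0^{+\infty}(1+w^{q+1})^{-p/(p+1)}\,dw\;\le\;\int_0^{1}1\,dw+\int_1^{+\infty}w^{-p(q+1)/(p+1)}\,dw\;=\;1+\frac{p+1}{pq-1}\;=\;\frac{p(q+1)}{pq-1}\,,
\]
and plugging this bound into your expression for $\ell_u$ reproduces the constant in \eqref{absb} \emph{exactly}. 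The secondary issue about monotone comparison when $p\wedge q<1$ is harmless: for the $\ell_u$ bound you compare against the solution started at $(\ell_u,0)$, where $\tilde U(0)=\ell_u>0$ forces $\tilde V'(0)>0$, so both components are strictly positive for $\tau>0$ and the system is locally Lipschitz there; the case $\ell_u=0$ is trivial.
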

\begin{proof}
	First of all we observe that, by exploiting \eqref{eq int form} and using the monotonicity of the components, we easily obtain the inequalities 
	$$
	u'(r) \le -v^q(r)  \, \Theta(r)
	$$
	and
	$$
	v'(r) \le -u^p(r)  \, \Theta(r) \, ,
	$$
	for all $ r>0 $. Integrating the former from $ r $ to $ +\infty $, we infer that 
	$$
	\ell_u - u(r) \le - \int_r^{+\infty} v^q \, \Theta \, ds \qquad \Longrightarrow \qquad - u(r) \le - \int_r^{+\infty} v^q \, \Theta \, ds \, ,
	$$
	which substituted into the latter yields 
	\beq\label{int-grw}
	v'(r)  \le - \left( \int_r^{+\infty} v^q \, \Theta \, ds  \right)^p \Theta (r) \qquad \forall r>0 \, .
	\eeq
  Upon multiplying both sides of \eqref{int-grw} by $ v^q $, note that such inequality can be rewritten as   
  	\beq\label{int-grw-2}
  \frac{\left( v^{q+1} \right)'(r)}{q+1}  \le - \left( \int_r^{+\infty} v^q \, \Theta \, ds  \right)^p \, v^q(r) \, \Theta (r) \qquad \forall r>0 \, .
  \eeq 
  Hence, by further integrating \eqref{int-grw-2} from $ r $ to $ +\infty $ we end up with 
    	\beqn\label{int-grw-3}
  \frac{\ell_v^{q+1} - v^{q+1}(r)}{q+1}  \le - \frac{\left( \int_r^{+\infty} v^q \, \Theta \, ds  \right)^{p+1}}{p+1} \qquad \forall r>0 \, ,
  \eeqn
  which implies 
      	\beqn\label{int-grw-4}
  v(r)  \ge \left(\frac{q+1}{p+1}\right)^{\frac{1}{q+1}}  \left( \int_r^{+\infty} v^q \, \Theta \, ds  \right)^{\frac{p+1}{q+1}} \qquad \forall r>0 \, ,
  \eeqn
  and this inequality can equivalently be rewritten as 
        	\beq\label{int-grw-5}
  v^q(r) \, \Theta(r)  \left( \int_r^{+\infty} v^q \, \Theta \, ds  \right)^{-q \frac{p+1}{q+1}} \ge \left(\frac{q+1}{p+1}\right)^{\frac{q}{q+1}}  \Theta(r) \qquad \forall r>0 \, .
  \eeq 
  On the other hand, if we integrate \eqref{int-grw-5} from $ 0 $ to $r$, we find 
          	\beqn\label{int-grw-6}
 \frac{q+1}{pq-1} \left[ \left( \int_r^{+\infty} v^q \, \Theta \, ds  \right)^{-\frac{pq-1}{q+1}} - \left( \int_0^{+\infty} v^q \, \Theta \, ds  \right)^{-\frac{pq-1}{q+1}}  \right] \ge \left(\frac{q+1}{p+1}\right)^{\frac{q}{q+1}} \int_0^r \Theta \, ds \qquad \forall r>0 \, ,
  \eeqn
  that is, upon dropping the rightmost term on the left-hand side and rearranging factors,  
            	\beqn\label{int-grw-7}
  \left( \int_r^{+\infty} v^q \, \Theta \, ds  \right)^{\frac{pq-1}{q+1}}  \le   \frac{(q+1)^{\frac{1}{q+1}} \, (p+1)^{\frac{q}{q+1}}}{pq-1} \left( \int_0^r \Theta \, ds \right)^{-1} \qquad \forall r>0 \, .
  \eeqn
  Because $ v(r) > \ell_v $, the above estimate entails 
  $$
    \ell_v^{q \frac{pq-1}{q+1}} \left( \int_r^{+\infty} \Theta \, ds  \right)^{\frac{pq-1}{q+1}}  \le   \frac{(q+1)^{\frac{1}{q+1}} \, (p+1)^{\frac{q}{q+1}}}{pq-1} \left( \int_0^r \Theta \, ds \right)^{-1} \qquad \forall r>0 \, ,
  $$
  namely 
   \beq\label{int-grw-8}
  \ell_v  \le   \frac{(q+1)^{\frac{1}{q(pq-1)}} \, (p+1)^{\frac{1}{pq-1}}}{\left(pq-1\right)^{ \frac{q+1}{q(pq-1)} }} \left( \int_0^r \Theta \, ds \right)^{-\frac{q+1}{q(pq-1)}} \left( \theta - \int_0^r \Theta \, ds \right)^{-\frac{1}{q}}  \qquad \forall r>0 \, .
  \eeq  
  A straightforward optimization argument over $r$ ensures that the minimum of the right-hand side is attained if and only if 
  $$
  \int_0^r \Theta \, ds = \frac{q+1}{q(p+1)} \,  \theta \, , 
  $$
  so by substituting such value into \eqref{int-grw-8}, and carrying out some algebraic simplifications, we deduce the claimed bound on $ \ell_v $ in \eqref{absb}. The analogous bound on $ \ell_u $ is readily obtained by symmetry, i.e.~interchanging the roles of $ u$ and $ v $ along with those of $ p$ and $q $.
	\end{proof}

\begin{proof}[Proof of Theorem \ref{teo-cs-interv}]
Let $ \xi>0 $ be fixed. First of all we observe that, as a direct consequence of Lemma \ref{lem-interval}, the set of all $ \eta>0 $ for which there exists a globally positive solution to \eqref{Cauchy pb} is necessarily an interval, which we call $I$. Proposition \ref{thm: main ex} guarantees that $I$ is nonempty, and by virtue of Proposition \ref{not-both-vanish} and Lemma \ref{extendibility} we can also assert that $ I $ is not a singleton. Moreover, we claim that
\beq \label{el-eq-1}
\xi \ge \left( \eta - \theta \xi^p  \right)_+^q \theta  \qquad \text{and} \qquad \eta \ge \left( \xi - \theta \eta^q \right)_+^p \theta\,, 
\eeq
which readily ensure that $ I $ is in addition bounded and bounded away from zero. In order to obtain \eqref{el-eq-1} we notice that, by monotonicity, 
\beq\label{f1}
u(r) = \xi - \int_0^r \frac{1}{\psi^{n-1}(s)} \left(  \int_0^s v^q \, \psi^{n-1} \, dt \right) ds \ge \xi - \eta^q \int_0^r \Theta \, ds \ge \xi - \theta \eta^q 
\eeq
and
\beq\label{f2}
v(r) = \eta - \int_0^r \frac{1}{\psi^{n-1}(s)} \left(  \int_0^s u^p \, \psi^{n-1} \, dt \right) ds \ge \eta - \xi^p \int_0^r \Theta \, ds \ge \eta - \theta  \xi^p 
\eeq
for all $ r>0 $, so the desired bounds follow by plugging \eqref{f2} into \eqref{f1} and vice versa, using the positivity of the components and eventually letting $ r \to +\infty $. Hence, we can denote by $ \eta_m(\xi)$ and $\eta_M(\xi) $ the strictly positive and finite infimum and supremum of $I$, respectively. An immediate application of Lemma \ref{glob-conv-unif} shows that they are actually a minimum and a maximum, that is, both the pairs $ (\xi,\eta_m(\xi)) $ and $ (\xi,\eta_M(\xi)) $ give rise to globally positive solutions to \eqref{Cauchy pb}, \emph{i.e.}~$I$ is in addition closed and thus \eqref{prop-etamm-2} is necessary and sufficient for a global solution to exist. Still as a consequence of Proposition \ref{not-both-vanish}, Lemma \ref{extendibility} and the definitions of $ \eta_m $ and $ \eta_M $, it is plain that \eqref{upvz} and \eqref{uzvp} must hold. On the other hand, the validity of \eqref{upvp} follows from \eqref{upvz}, \eqref{uzvp} and Lemma \ref{ordering}: indeed, if $ (u_m,v_m) $ and $ (u_M,v_M) $ are the globally positive solutions to \eqref{Cauchy pb} starting from $ (\xi,\eta_m(\xi)) $ and $ (\xi,\eta_M(\xi)) $, respectively, and $ (u,v) $ is the one starting from $ (\xi,\eta) $, for any $ \eta \in \left( \eta_m(\xi) , \eta_M(\xi) \right) $, we have that
$$
\ell_{u}- \ell_{u_M} > \xi-\xi = 0 \qquad \text{and} \qquad \ell_{v}- \ell_{v_m} >\eta-\eta_m > 0 \, ,
$$
that is both $ \ell_u $ and $ \ell_v $ are strictly positive. 

Let us now prove the claimed properties of the functions $ \xi \mapsto \eta_m(\xi) $ and $ \xi \mapsto \eta_M(\xi) $, which by definition take values in $ (0,+\infty) $ and comply with \eqref{prop-etamm-bis}. To this aim, we can argue similarly to the proof of Theorem \ref{monotone-coro}. Assume by contradiction that there exist $ \xi_1 > \xi_2 > 0 $ such that $ \eta_m(\xi_1) < \eta_m(\xi_2) $, and let $ (u_1,v_1) $ and $ (u_2,v_2) $ denote the corresponding solutions to \eqref{Cauchy pb} starting from $ \left(\xi_1 , \eta_m(\xi_1)\right) $ and $ \left(\xi_2 , \eta_m(\xi_2)\right) $, respectively. Then, Lemma \ref{ordering} would entail 
$$
\ell_{v_2} - \ell_{v_1} > \eta_m(\xi_2)-\eta_m(\xi_1)>0 \, ,
$$
which is absurd since we already know that $ \ell_{v_2} = \ell_{v_1}=0 $. Hence, the function $ \xi \mapsto \eta_m(\xi)  $ is nondecreasing. Via an analogous argument, we infer that also $ \xi \mapsto \eta_M(\xi) $ is nondecreasing (in this case one has to use the monotonicity of $ u_1 - u_2 $). Upon reversing the roles of $ p,q $ and $ \xi,\eta $, we notice that it is well defined a function $  \xi_M : (0,+\infty) \to (0,+\infty) $ that to every $ \eta>0 $ associates the only value $ \xi \equiv \xi_M(\eta) > 0$ such that $ (\xi,\eta) $ gives rise to a globally positive solution to \eqref{Cauchy pb} satisfying $ \ell_v =0 $. By construction, we have that $  \xi_M\!\left( \eta_m(\xi) \right) = \xi $ and $  \eta_m\!\left( \xi_M(\eta) \right) = \eta $ for all $ \xi,\eta>0 $, which shows that $ \eta_m  $ is a bijection of $ (0,+\infty) $ into itself with $ \xi_M = \eta_m^{-1} $. Therefore, due to its monotonicity, it is necessarily strictly increasing and continuous. A completely analogous reasoning proves that the same properties hold for $ \eta_M $.

Finally, as for \eqref{prop-etamm-ter}, it is enough to observe that Lemma \ref{ordering} and Proposition \ref{abs-bound} yield
\beqn\label{bdd-int}
\eta_M(\xi) - \eta_m(\xi) < \ell_{v_M} - \ell_{v_m} \le C \qquad \forall \xi>0  \, ,
\eeqn
where $ C>0 $ is, for instance, the same constant appearing in the rightmost bound of formula \eqref{absb}. 
\end{proof}

\section{Rigidity of finite-energy solutions: proof of Theorem \ref{full-rigidity-proof}}\label{rigid}

Now, our goal is to show that globally positive solutions cannot have a finite energy, unless $  \Mb^n \equiv \R^n $ and $ q,p $ are critical. Before, we a need real-analysis lemma, which will be useful especially in the stochastically incomplete case.

\begin{lemma}\label{tl-state} 
	Let $ f \in C^1([1,+\infty)) $ satisfy $ f,f'>0 $ and $ \lim_{r\to+\infty} f(r) = +\infty $. Let $ \alpha , \epsilon > 0 $. Then  
	\beq\label{tl-2}
 \limsup_{r \to +\infty}   f^\epsilon(r) \int_r^{+\infty} \left( \frac{f}{f'} \right)^\alpha ds = + \infty \, . 
	\eeq
	\end{lemma}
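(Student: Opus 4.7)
The plan is to reduce the statement to an equivalent one via a logarithmic change of variable, after which a contradiction using H\"older's inequality and a geometric-series estimate becomes transparent. First I would dispose of the trivial case: since $(f/f')^\alpha$ is continuous on $[1, +\infty)$, its integral over any bounded interval is finite, so either $I(r) := \int_r^{+\infty} (f/f')^\alpha \, ds = +\infty$ for every $r \ge 1$ (in which case $f^\epsilon(r) I(r) \equiv +\infty$ and \eqref{tl-2} trivially holds), or $I(r) < +\infty$ for every $r \ge 1$. I would focus on the latter case.

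Next I would perform the change of variable $t = \log f(s)$. Since $f$ is $C^1$, strictly increasing (because $f' > 0$) and tends to $+\infty$, the function $\phi(s) := \log f(s)$ defines a $C^1$ bijection of $[1, +\infty)$ onto $[\log f(1), +\infty)$, whose inverse $\psi := \phi^{-1}$ satisfies $\psi'(t) = (f/f')(\psi(t))$ and $\psi(t) \to +\infty$ as $t \to +\infty$. Setting $T := \log f(r)$ and using $ds = \psi'(t)\,dt$, I would obtain
\[
f^\epsilon(r) \int_r^{+\infty} \left(\frac{f}{f'}\right)^\alpha ds = e^{\epsilon T} \int_T^{+\infty} (\psi'(t))^{\alpha+1} \, dt ,
\]
so the statement reduces to showing
\[
\limsup_{T \to +\infty} e^{\epsilon T} \int_T^{+\infty} (\psi'(t))^{\alpha+1} \, dt = +\infty .
\]

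I would then argue by contradiction: suppose the above limsup is bounded by some $C > 0$, so that $\int_T^{+\infty} (\psi'(t))^{\alpha+1} \, dt \le C \, e^{-\epsilon T}$ for all $T$ larger than some $T_0$. Applying H\"older's inequality on each unit interval $[T_0 + k, T_0 + k + 1]$ with exponents $\alpha + 1$ and $(\alpha + 1)/\alpha$ would give
\[
\int_{T_0+k}^{T_0+k+1} \psi'(t) \, dt \le \left( \int_{T_0+k}^{T_0+k+1} (\psi'(t))^{\alpha+1} \, dt \right)^{\! 1/(\alpha+1)} \le C^{1/(\alpha+1)} e^{-\epsilon(T_0+k)/(\alpha+1)} .
\]
Summing the resulting geometric series over $k \ge 0$ would then yield $\int_{T_0}^{+\infty} \psi'(t) \, dt < +\infty$. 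But this integral equals $\lim_{t \to +\infty} \psi(t) - \psi(T_0) = +\infty$, a contradiction.

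The main obstacle I foresee is identifying the correct change of variable: once $t = \log f(s)$ is used, the transformed integrand takes the clean form $(\psi')^{\alpha+1}$, the growth factor $f^\epsilon$ becomes the exponential weight $e^{\epsilon T}$, and the divergence $f \to +\infty$ translates precisely into $\int^{+\infty} \psi' \, dt = +\infty$; the H\"older-plus-summation step then writes itself. A direct approach based on pointwise manipulation of $f$ and $f'$ (for instance, differentiating $f^\epsilon I$) tends to yield only lower bounds on the growth rate of $f$ rather than a genuine contradiction, which is why the reduction appears essential.
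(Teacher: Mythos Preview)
Your proof is correct and follows essentially the same strategy as the paper: a change of variable transforms the integral into $\int (h')^{\alpha+1}$ for $h$ an inverse function, then a contradiction is reached via H\"older on a partition plus geometric summation forcing $\int h' < +\infty$, against the surjectivity of $h$. The only difference is that the paper uses $\tau = f(s)$ (so $g = f^{-1}$, yielding a polynomial weight $t^{\alpha+\epsilon}$ and dyadic intervals $[t,2t]$) whereas you use $t = \log f(s)$ (so $\psi = (\log f)^{-1}$, yielding an exponential weight $e^{\epsilon T}$ and unit intervals); the two are related by $t = \log \tau$, and your version is slightly cleaner since it avoids the intermediate estimate $\tau^\alpha \ge f^\alpha(r)$ that the paper needs before applying H\"older.
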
  
\begin{proof}
	First of all, we set
	$$
	g(t):= f^{-1}(t) \qquad \forall t \in \left[ f(1),+\infty \right)
	$$
	and apply the change of variables $ \tau =f(s) $ in the integral in formula \eqref{tl-2}, obtaining: 
	\beq\label{tl-3}
\int_r^{+\infty} \left( \frac{f}{f'} \right)^\alpha ds = \int_{f(r)}^{+\infty} \frac{\tau^\alpha}{\left[ f' \! \left( f^{-1}(\tau) \right) \right]^{\alpha+1}} \, d\tau = \int_{f(r)}^{+\infty} \tau^\alpha \left[ g' \! \left( \tau \right) \right]^{\alpha+1} d\tau \ge f^\alpha(r)  \int_{f(r)}^{+\infty} \left[ g' \! \left( \tau \right) \right]^{\alpha+1} d\tau \, . 
	\eeq 
	Assume by contradiction that \eqref{tl-2} does not hold, namely that there exists $ C>0 $ such that 
	$$
f^\epsilon(r) \int_r^{+\infty} \left( \frac{f}{f'} \right)^\alpha ds \le C \qquad \forall r \in [1,+\infty) \, .
	$$
	Upon setting $ r=f^{-1}(t) $ and taking advantage of \eqref{tl-3}, we would thus infer that 
	$$
 t^{\alpha+\epsilon} \int_{t}^{+\infty}  \left[ g' \! \left( \tau \right) \right]^{\alpha+1}  d\tau \le C  \qquad  \forall t \in \left[ f(1),+\infty \right) ,
	$$ 
	which in particular implies, by H\"older's inequality, 
		\beq\label{tl-4}
\int_t^{2t} g'(\tau) \, d\tau \le t^{\frac{\alpha}{\alpha+1}} \left( \int_{t}^{2t}  \left[ g' \! \left( \tau \right) \right]^{\alpha+1}  d\tau \right)^{\frac{1}{\alpha+1}} \le  C^{\frac{1}{\alpha+1}} t^{-\frac{\epsilon}{\alpha+1}} \qquad  \forall t \in \left[ f(1),+\infty \right) .
	\eeq 
	Finally, we apply \eqref{tl-4} with the choices $ t \equiv t_k:=f(1)\cdot 2^k $, for all $ k \in \mathbb{N} $. This yields
	$$
	\int_{t_k}^{t_{k+1}} g'(\tau) \, d\tau \le \frac{C^{\frac{1}{\alpha+1}}}{f(1)^{\frac{\epsilon}{\alpha+1}}} \, 2^{-\frac{\epsilon}{\alpha+1}k} \qquad \forall k \in \mathbb{N} \, ,
	$$
	whence, by adding up, 
	$$
	\int_{f(1)}^{+\infty} g'(\tau) \, d\tau \le  \frac{C^{\frac{1}{\alpha+1}}}{f(1)^{\frac{\epsilon}{\alpha+1}}} \cdot \frac{2^{\frac{\epsilon}{\alpha+1}}}{2^{\frac{\epsilon}{\alpha+1}}-1} \, ,
	$$
	that is $ g' \in L^1([f(1),+\infty)) $, which is inconsistent with the fact that $ g $ is surjective onto $ [1,+\infty) $.
\end{proof}

\begin{proof}[Proof of Theorem \ref{full-rigidity-proof}] 
Having in mind Remark \ref{R1}, in order to prove the thesis it is enough to establish that, as soon as 
\beq\label{nnn}
\Mb^n \not \equiv \R^n \qquad \text{or} \qquad  \frac{1}{p+1} + \frac{1}{q+1} < \frac{n-2}{n} \, ,
\eeq  
the globally positive solution $ (u,v) $ satisfies
\beq\label{nnn-2}
\int_0^{+\infty} u' v' \, \psi^{n-1} \, dr  = \int_0^{+\infty} u^{p+1} \, \psi^{n-1} \, dr = \int_0^{+\infty} v^{q+1} \, \psi^{n-1} \, dr = +\infty \, .
\eeq 
To this aim, as a consequence of Propositions \ref{prop: der P} and \ref{prop: poho mon}, it is readily seen that under \eqref{nnn} there exist constants $ r_0, K_0 >0 $ such that 
\beq\label{nnn-3}
\psi^{n-1}(r) \left( \frac{u(r) v'(r)}{p+1} + \frac{u'(r) v(r)}{q+1}\right) \le - K_0 \qquad \forall r \in [r_0,+\infty) \, ,
\eeq 
since requiring $ \Mb^n \not \equiv \R^n $ amounts asking that $ \psi''>0 $ in some interval. On the other hand, upon multiplying the first and the second equation in \eqref{Cauchy pb} by $v$ and $u$, respectively, and integrating by parts, we end up with the identities 
\beq\label{nnn-4}
\int_0^{r} u' v' \, \psi^{n-1} \, ds - \psi^{n-1}(r) u'(r)  v(r) =  \int_0^r v^{q+1} \, \psi^{n-1} \, ds \qquad \forall r > 0
\eeq 
and
\beq\label{nnn-4-bis}
\int_0^{r} u' v' \, \psi^{n-1} \, ds - \psi^{n-1}(r) u(r) v'(r) =  \int_0^r u^{p+1} \, \psi^{n-1} \, ds \qquad \forall r>0  \, .
\eeq 
In particular, recalling that $ u'<0 $ and $ v'<0 $, we easily obtain
$$
\int_0^{+\infty} u' v' \, \psi^{n-1} \, dr \le \int_0^{+\infty} v^{q+1} \, \psi^{n-1} \, dr \qquad \text{and} \qquad \int_0^{+\infty} u' v' \, \psi^{n-1} \, dr \le \int_0^{+\infty} u^{p+1} \, \psi^{n-1} \, dr \, ,
$$
which means that \eqref{nnn-2} is in fact equivalent to
\beq\label{nnn-5}
\int_0^{+\infty} u' v' \, \psi^{n-1} \, dr   = +\infty \, .
\eeq 
The proof that, under \eqref{nnn}, then \eqref{nnn-5} holds, will be our main focus from now on. To reach it, we will distinguish between the stochastically complete and incomplete case. 

\medskip 

\noindent i) Let $ \Theta \not \in L^1(\R^+) $. Thanks to Corollary \ref{prop: sc to 0}, we know that  
\beq\label{sc-1}
\lim_{r \to +\infty} u(r) = 0  \qquad \text{and} \qquad \lim_{r \to +\infty} v(r) = 0 \, . 
\eeq 
Assume by contradiction that 
\beq\label{sc-2}
\int_0^{+\infty} u' v' \, \psi^{n-1} \, dr < +\infty \, . 
\eeq 
As a consequence, by virtue of \eqref{nnn-4} and \eqref{nnn-4-bis} we can deduce that both the limits
$$
L_1 := \lim_{r \to +\infty} \psi^{n-1}(r) u'(r) v(r) \qquad \text{and} \qquad L_2 := \lim_{r \to +\infty} \psi^{n-1}(r) u(r) v'(r)
$$
exist, and they are clearly nonpositive. Moreover, upon letting $ r \to +\infty $ in \eqref{nnn-3}, we infer that  
$$
\frac{L_2}{p+1} + \frac{L_1}{q+1}  \le - K_0 \, ,
$$
which means that either $ L_2 < 0 $ or $ L_2=0 $ and $ L_1 < 0 $. In the former case, there exist constants $ r_1 ,K_1>0$ such that 
\beq\label{sc-5}
\psi^{n-1}(r) u(r) v'(r) \le -K_1 \qquad \forall r \in [r_1,+\infty) \, ,
\eeq
that is
$$
\psi^{n-1}(r) u'(r) v'(r) \ge  - K_1 \, \frac{u'(r)}{u(r)} \qquad \forall r \in [r_1,+\infty) \, .
$$ 
Upon integrating such inequality from $ r_1 $ to any $ r>r_1 $, we end up with 
$$
\int_{r_1}^r u' v' \, \psi^{n-1} \, ds  \ge K_1 \, \log\!\left( \frac{u(r_1)}{u(r)} \right)  \qquad \forall r \in [r_1,+\infty) \, ,
$$
which is clearly in contradiction with \eqref{sc-2}, recalling the left limit in \eqref{sc-1}. In the latter case one argues analogously, using the right limit in \eqref{sc-1} instead. 

\medskip 

\noindent ii) Let $ \Theta \in L^1(\R^+) $. From Proposition \ref{not-both-vanish}, we know that at least one between $ \ell_ u$ and $ \ell_v $ is strictly positive. If both $ \ell_u,\ell_v>0 $, then from \eqref{eq int form} and the monotonicity of $ u $ and $v $ it readily follows that 
\beq\label{nnn-8}
-u'(r) \ge \ell_v^q \, \frac{\int_0^r \psi^{n-1} \, ds}{\psi^{n-1}(r)} \quad \text{and} \quad -v'(r) \ge \ell_u^{p} \, \frac{\int_0^r \psi^{n-1} \, ds}{\psi^{n-1}(r)}  \qquad \forall r>0 \, ,
\eeq 
thus \eqref{nnn-5} holds provided we can show that 
\beq\label{nnn-6}
\int_0^{+\infty} \frac{\left( \int_0^r \psi^{n-1} \, ds \right)^2}{\psi^{n-1}(r)} \, dr   = +\infty \, .
\eeq 
This is in fact a simple consequence of Lemma \ref{tl-state} with the choices $ f(r) = \int_0^r \psi^{n-1} \, ds $ and $ \alpha=\epsilon=1 $, since  
$$
\int_0^{+\infty} \frac{\left( \int_0^r \psi^{n-1} \, ds \right)^2}{\psi^{n-1}(r)} \, dr  \ge \int_r^{+\infty} \frac{f^2}{f'} \, ds \ge f(r) \int_r^{+\infty} \frac{f}{f'} \, ds
$$
for all $ r \ge 1 $, whence \eqref{nnn-6} follows upon letting $ r \to +\infty $ along a sequence that attains the $ \limsup $ of the rightmost side. Let us therefore focus on the case where $ \ell_u = 0 $ and $ \ell_v > 0 $ (if instead $ \ell_v=0 $ and $ \ell_u>0 $ the argument is completely symmetric). The left inequality in \eqref{nnn-8} still holds, and its integration from $ r $ to $ +\infty $ yields 
\beq\label{nnn-7}
u(r) \ge \ell_v^q \, \int_r^{+\infty} \frac{\int_0^s \psi^{n-1} \, dt}{\psi^{n-1}(s)} \, ds  \qquad \forall r>0 \, .
\eeq 
On the other hand, by using this information along with the monotonicity of $u$ in the right identity of \eqref{eq int form}, we also deduce that 
\beq\label{nnn-7-bis}
-v'(r) \ge  u^{p}(r) \, \frac{\int_0^r \psi^{n-1} \, ds}{\psi^{n-1}(r)} \ge  \ell_v^{pq} \left( \int_r^{+\infty} \frac{\int_0^s \psi^{n-1} \, dt}{\psi^{n-1}(s)} \, ds \right)^p \frac{\int_0^r \psi^{n-1} \, ds }{\psi^{n-1}(r)}  \qquad \forall r>0 \, ,
\eeq 
so by multiplying \eqref{nnn-7} and \eqref{nnn-7-bis} we obtain
\beqn\label{nnn-11}
\psi^{n-1}(r) u(r) v'(r) \le - \ell_v^{(p+1)q} \left[ \left( \int_0^r \psi^{n-1} \, ds \right)^{\frac{1}{p+1}} \int_r^{+\infty} \frac{\int_0^s \psi^{n-1} \, dt}{\psi^{n-1}(s)} \, ds \right]^{p+1} \qquad \forall r>0 \, .
\eeqn
Thanks to Lemma \ref{tl-state} applied to the same $f$ as above, $ \alpha=1 $ and $ \epsilon = \frac{1}{p+1} $, we infer that the $\liminf $ of the right-hand side is $ -\infty $, hence also  
\beq\label{nnn-12}
\liminf_{r \to +\infty} \psi^{n-1}(r) u(r) v'(r) = -\infty \, .
\eeq
Suppose by contradiction that \eqref{sc-2} holds. Then, from \eqref{nnn-4-bis} we deduce again that the limit $ L_u $ exists, and in this case it must necessarily be equal $ -\infty $ due to \eqref{nnn-12}. This entails the validity of \eqref{sc-5} for other suitable constants $ r_1,K_1 > 0 $, which is however inconsistent with \eqref{sc-2} as shown in i) (recall that $ u $ vanishes at infinity by assumption).
\end{proof}

\section{Generalizations: proof of Corollary \ref{CH-remove}} \label{CG}

Finally, we show that all of our main results can be extended to a class of Riemannian models slightly wider than the Cartan-Hadamard one.

\begin{proof}[Proof of Corollary \ref{CH-remove}]
First of all, we observe that the preliminary results of Subsection \ref{loc-exi} hold regardless of the Cartan-Hadamard assumption. Moreover, a straightforward computation shows that requiring the function $ \mathcal{V} $ to be convex, that is $ \mathcal{V}'' \ge 0 $ on $ (0,+\infty) $, is equivalent to 
\beq \label{convex-poho}
 \left(1+  \frac{1}{p+1} + \frac{1}{q+1}\right) \psi^{n-1}(r) -2(n-1) \left( \int_0^r \psi^{n-1}\,ds \right) \frac{\psi'(r)}{\psi(r)} \le 0 \qquad \forall r \in (0,+\infty) \, .
\eeq
On the other hand, from the proof of Proposition \ref{prop: der P} it is clear that \eqref{convex-poho} is precisely what we need to assert that $P_{(u,v)}$ is monotone non-increasing, and $ P_{(u,v)}(r) \le 0 $ for every $ r \in (0,R_{\xi,\eta}) $, for any local solution to \eqref{Cauchy pb} starting from $ (\xi,\eta) \in (0,+\infty)^2 $. It is not difficult to verify that all the proofs of Section \ref{esistenza-base} and the proof of Theorem \ref{monotone-coro} solely rely on this property, \emph{i.e.}~we never use directly the fact that $ \psi'' \ge 0 $. As concerns Theorems \ref{teo-cs-interv} and \ref{full-rigidity-proof}, let us notice in addition that, if $ \Mb^n $ is a noncompact model manifold such that $ \mathcal{V} $ is convex and \eqref{nnn} holds, then $ \mathcal{V}''>0 $ somewhere. Indeed, if by contradiction $ \mathcal{V}''(r) = 0  $ for every $ r \in (0,+\infty) $, from the definition of $ \mathcal{V} $ we would end up with the identity 
$$
 \left( \int_0^r \psi^{n-1} \, ds \right)^{\frac{pq-1}{2(p+1)(q+1)}} = c r \qquad \forall r \in [0,+\infty)
$$  
for some $ c>0 $, that is
$$
\psi(r) =  \tilde{c} \, r^{\frac{1}{n-1} \left[ \frac{2(p+1)(q+1)}{pq-1} - 1 \right] } \qquad \forall r \in [0,+\infty)
$$
for another constant $ \tilde{c}>0 $. However, since $ \psi'(0)=1 $, this is possible if and only if $ \tilde{c} =1 $ and 
$$
\frac{1}{n-1} \left[ \frac{2(p+1)(q+1)}{pq-1} - 1 \right] = 1 \qquad \Longleftrightarrow \qquad \frac{1}{p+1} + \frac{1}{q+1} = \frac{n-2}{n} \, ,
$$
which also entails $ \psi(r)=r $, that is the exponents are critical and $ \Mb^n \equiv \R^n $, a contradiction. Hence, under \eqref{nnn} we can infer that $ \mathcal{V}'' $ must be positive somewhere, which implies in turn that inequality \eqref{convex-poho} is strict in an interval, so \eqref{nnn-3} does hold (recall Proposition \ref{prop: poho mon}) and the proof of Theorem \ref{full-rigidity-proof} can be carried out exactly as above. The fact that the volume of $ \Mb^n $ is infinite, which is used when Lemma \ref{tl-state} is invoked, is an immediate consequence of the convexity of $  \mathcal{V} $ (and it is in any case always true if $ \Mb^n $ is stochastically incomplete). The same holds for Theorem \ref{teo-cs-interv}, as the stochastic-incompleteness assumption yields $ \Mb^n \not \equiv \R^n $, so \eqref{eq-prod} is again satisfied and from there on the proof can be repeated identically.
\end{proof}

\smallskip

\textbf{Acknowledgments.} M.M.~was supported by the PRIN Project ``Direct and Inverse Problems for Partial Differential Equations: Theoretical Aspects and Applications'' (grant no.~201758MTR2, MIUR, Italy). The authors are partially supported by the INdAM-GNAMPA group.



\end{document}